\renewcommand{\Im}{\operatorname{Im}}
\renewcommand{\Re}{\operatorname{Re}}
\title{\bf Poisson-type problems with transmission conditions at boundaries of infinite metric trees}
\author{Maryna Kachanovska\footnote{POEMS, CNRS, Inria, ENSTA, Institut Polytechnique de Paris, 91120 Palaiseau, France, E-mail: \url{maryna.kachanovska@inria.fr}, Webpage: \url{https://sites.google.com/site/mkachanovska/}, ORCID: \url{https://orcid.org/0000-0001-9991-9874}}
	\and
	Kiyan Naderi\footnote{Technische Universit\"at Graz,  Fakultät für Mathematik, Physik und Geod\"asie, Institut f\"ur Analysis und Zahlentheorie,		
		Kopernikusgasse 24, 8010 Graz, Austria, \url{naderi@math.tugraz.at}}
	\and
	Konstantin Pankrashkin\footnote{Carl von Ossietzky Universit\"at Oldenburg, Fakult\"at V -- Mathematik und Naturwissenschaften, Institut f\"ur Mathematik, Ammerl\"ander Heerstra{\ss}e 114--118, 26129 Oldenburg, Germany,
		E-mail: \url{konstantin.pankrashkin@uol.de},
		Webpage: \url{https://uol.de/pankrashkin}, ORCID: \url{https://orcid.org/0000-0003-1700-7295}}
}
\date{}
\renewcommand{\Tilde}{\widetilde}
\newcommand{\Id}{\mathrm{Id}}
\newcommand{\loc}{\mathrm{loc}}
\newcommand{\comp}{\mathrm{comp}}
\DeclareMathOperator{\diam}{diam}
\DeclareMathOperator{\ran}{ran}
\DeclareMathOperator{\SL}{\mathsf{SL}}
\DeclareMathOperator{\DL}{\mathsf{DL}}
\DeclareMathOperator{\dtn}{\mathcal{C}}
\newcommand{\cT}{\mathcal{T}}  
\newcommand{\dd}{\mathrm{d}}
\newcommand{\half}{{\frac{1}{2}}}
\theoremstyle{plain}
\newtheorem{thm}{Theorem}[section]
\newtheorem{lem}[thm]{Lemma}
\newtheorem{cor}[thm]{Corollary}
\newtheorem{prop}[thm]{Proposition}
\theoremstyle{definition}
\newtheorem{defn}[thm]{Definition}
\newtheorem{prob}[thm]{Problem}
\newtheorem{rmk}[thm]{Remark}
\theoremstyle{remark}
\newcommand{\N}{\mathbb{N}}     
\newcommand{\R}{\mathbb{R}}     
\newcommand{\C}{\mathbb{C}}     
\newcommand{\T}{\cT}   
\newcommand{\D}{\mathcal{D}}   
\newcommand{\cE}{\mathcal{E}}   
\newcommand{\cG}{\mathcal{G}}   
\newcommand{\cS}{\mathcal{S}}   
\newcommand{\TT}{\mathbb{T}}   
\newcommand{\one}{\mathbbm{1}}
\newcommand{\supp}{\operatorname{supp}}
\numberwithin{equation}{section}
\begin{document}

	\renewcommand{\proofname}{\bf Proof}
	\maketitle

	\begin{abstract}
	The paper introduces a Poisson-type problem on a mixed-dimensional structure combining a Euclidean domain and a lower-dimensional self-similar component touching a compact surface (interface). The lower-dimensional piece is a so-called infinite metric tree (one-dimensional branching structure), and the key ingredient of the study is a rigorous definition of the gluing conditions between the two components. These constructions are based on the recent concept of embedded trace maps
	and some abstract machineries derived from a suitable Green-type formula. The problem is then reduced to 
	the study of Fredholm properties of a linear combination of Dirichlet-to-Neumann maps for the tree and the Euclidean domain, which yields desired existence and uniqueness results. One also shows that finite sections of tree can be used for an efficient approximation of the solutions.
	\end{abstract}

	\tableofcontents

	\section{Introduction}
	
	The aim of the present work is to develop an abstract framework to study the solvability and approximations for a Poisson-type problem on a mixed-dimensional structure combining a Euclidean component and a lower-dimensional self-similar component touching along a surface. The Euclidean component is a so-called exterior domain $\Omega\subset\R^m$ (with $m\ge 2$), i.e. $\Omega$ is the complement of a compact set. We additionally assume that $\Omega$
	has smooth boundary, hence,
	\[
	\Gamma:=\partial\Omega,
	\]
	is a compact hypersurface in $\R^m$ (without boundary), and that both $\Omega$ and $\Gamma$ are connected. The lower-dimensional component is an infinite metric tree constructed as follows. 
	Fix some branching number $p\ge 2$ and consider some bounded interval (root edge), with one of its endpoints being declared as a root vetrex $o$. In the first step we attach to the non-root endpoint of this interval $p$ further bounded intervals ($1$st generation edges). If all $n$-th generation edges are constructed, one attaches $p$ new bounded intervals to the free end of each of them to obtain the $(n+1)$-th generation edges, and this process continues infinitely and creates a so-called rooted $p$-adic metric tree, denoted as $\cT$, which is a kind of a branched one-dimensional structure (see Fig.~\ref{fig-intro1} for an illustration and Subsection~\ref{sec21} below for a detailed description).
	
	\begin{figure}[h]
		
		\centering
		
		\tikzset{every picture/.style={line width=0.75pt}} 
		
		\scalebox{1.0}{\begin{tikzpicture}[x=0.75pt,y=0.75pt,yscale=-1,xscale=1]
			
			\draw [line width=1.5]    (111,21.5) -- (111,61) ;
			\draw [line width=1.5]    (111,61) -- (70,90) ;
			\draw [line width=1.5]    (111,61) -- (111.5,88.5) ;
			\draw [line width=1.5]    (111,61) -- (153,89) ;
			\draw [line width=1.5]    (70,90) -- (43,110) ;
			\draw [line width=1.5]    (60,110.5) -- (70,90) ;
			\draw [line width=1.5]    (73.5,110) -- (70,90) ;
			\draw [line width=1.5]    (111.5,88.5) -- (104,109) ;
			\draw [line width=1.5]    (112,109) -- (111.5,88.5) ;
			\draw [line width=1.5]    (123,109) -- (111.5,88.5) ;
			\draw [line width=1.5]    (180.5,111) -- (153,89) ;
			\draw [line width=1.5]    (164,111.5) -- (153,89) ;
			\draw [line width=1.5]    (149,110.5) -- (153,89) ;
			\draw  [dash pattern={on 0.84pt off 2.51pt}]  (9.5,60) -- (211.5,60.5) ;
			\draw  [dash pattern={on 0.84pt off 2.51pt}]  (9.5,91) -- (210,90.5) ;
			\draw  [dash pattern={on 0.84pt off 2.51pt}]  (10,111.5) -- (210.5,111) ;
			
			\draw (106.5,2.9) node [anchor=north west][inner sep=0.75pt]    {$o$ (root vertex)};
			\draw (116,30) node [anchor=north west][inner sep=0.75pt]   [align=left] {root edge};
			\draw (159,65) node [anchor=north west][inner sep=0.75pt]   [align=left] {1st generation edges};
			\draw (189.5,90.5) node [anchor=north west][inner sep=0.75pt]   [align=left] {2nd generation edges};
			\draw (75.5,113) node [anchor=north west][inner sep=0.75pt]   [align=left] {(and so on)};

		\end{tikzpicture}}
		
		\caption{An illustration of the structure of the tree $\T$ with $p=3$.}\label{fig-intro1}
		
	\end{figure}
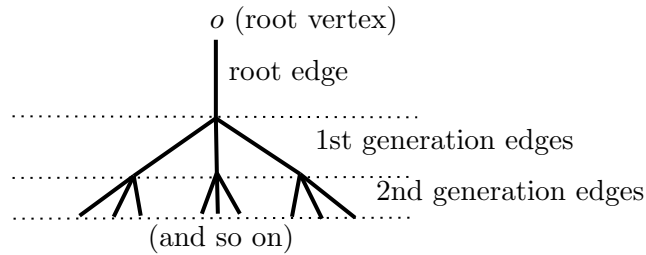

	Using the usual differentiation along each edge
	one then defines Sobolev-type spaces on $\T$ and the Laplace operator on $\cT$ (which is just the second derivative on each edge with transmission conditions at the branching point, see below for a detailed explanation).
	The key ingredient of our analysis is the possibility to interpret the above $\Gamma$ as the boundary of $\cT$, which allows one to define a trace operator $\gamma^\cT_0$ associating
	to each Sobolev-regular function $u$ on the tree a function $\gamma^\T_0 u$ in $L^2(\Gamma)$ to be considered at its boundary trace. The abstract idea of such a map originates from the paper~\cite{msv} considering discrete graphs with Euclidean boundaries, which was extended 
	and generalized to the case of metric trees and manifolds by the authors and their collaborators in \cite{Embedded Trace,jks}. Intuitively, the definition of the trace map in the context of the present work prescribes the way  how the metric  $\cT$ is ``glued'' to $\Gamma$ in order to create a hybrid structure consisting of $\Omega$ and $\cT$ (see Fig.~\ref{fig-intro2}). 	Once the trace has been defined, the normal derivatives $\gamma_1^{\mathcal{T}}u$ of suitably regular functions $u$ on $\cT$ can be defined by duality, via a suitable Green's formula.  We additionally note that these constructions are very close to the 
	abstract approach proposed in \cite{post} and are in the spirit of the general approaches 
	to transmission and boundary value problems with ``bad'' boundaries, see e.g.~\cite{ach,lancia}.

	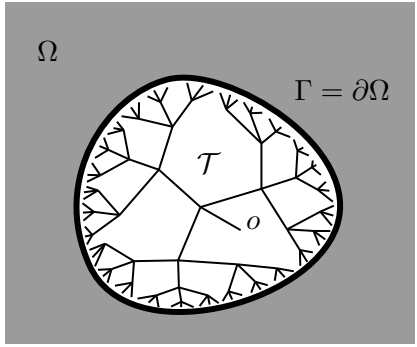
\begin{figure}
		\centering
		
		\scalebox{1.0}{			
			\tikzset{every picture/.style={line width=0.75pt}} 
						\begin{tikzpicture}[x=0.75pt,y=0.75pt,yscale=-1,xscale=1]
				
				\draw  [draw opacity=0][fill={rgb, 255:red, 155; green, 155; blue, 155 }  ,fill opacity=1 ] (22.6,53) -- (227,53) -- (227,225) -- (22.6,225) -- cycle ;
				\draw  [fill={rgb, 255:red, 255; green, 255; blue, 255 }  ,fill opacity=1 ][line width=2.25]  (103.75,92) .. controls (134.25,84.25) and (202,127) .. (188,166.5) .. controls (174,206) and (84.25,228.9) .. (64.75,185.5) .. controls (45.25,142.1) and (73.25,99.75) .. (103.75,92) -- cycle ;
				\draw    (120.2,156.2) -- (140.2,167.8) ;
				\draw    (150.6,147) -- (120.2,156.2) ;
				\draw    (120.2,156.2) -- (99.4,137.4) ;
				\draw    (109,183) -- (120.2,156.2) ;
				\draw    (163.4,167.4) -- (150.6,147) ;
				\draw    (150.6,147) -- (167.8,141.4) ;
				\draw    (150.6,147) -- (150.6,124.2) ;
				\draw    (99.4,137.4) -- (106.2,115.8) ;
				\draw    (84.6,132.2) -- (99.4,137.4) ;
				\draw    (79.8,160.2) -- (99.4,137.4) ;
				\draw    (87,183.4) -- (109,183) ;
				\draw    (109,183) -- (138.6,185) ;
				\draw    (109,183) -- (109.4,196.6) ;
				\draw    (87,183.4) -- (86.2,194.2) ;
				\draw    (72.6,174.6) -- (87,183.4) ;
				\draw    (87,183.4) -- (75.8,187.4) ;
				\draw    (109.4,196.6) -- (98.2,200.6) ;
				\draw    (123.8,200.6) -- (109.4,196.6) ;
				\draw    (110.6,203.8) -- (109.4,196.6) ;
				\draw    (134.2,198.6) -- (138.6,185) ;
				\draw    (138.6,185) -- (147.5,193.75) ;
				\draw    (138.6,185) -- (158.25,187) ;
				\draw    (163.4,167.4) -- (169.8,177.8) ;
				\draw    (163.4,167.4) -- (176.6,168.2) ;
				\draw    (163.4,167.4) -- (180,158.5) ;
				\draw    (167.8,141.4) -- (178.6,147.4) ;
				\draw    (175.8,136.2) -- (167.8,141.4) ;
				\draw    (168.6,127.8) -- (167.8,141.4) ;
				\draw    (157.8,118.2) -- (150.6,124.2) ;
				\draw    (150.6,124.2) -- (145,109) ;
				\draw    (150.6,124.2) -- (133,107) ;
				\draw    (115.8,102.6) -- (106.2,115.8) ;
				\draw    (100.2,103.4) -- (106.2,115.8) ;
				\draw    (89.4,111.8) -- (106.2,115.8) ;
				\draw    (81.8,117.4) -- (84.6,132.2) ;
				\draw    (84.6,132.2) -- (74.2,127.4) ;
				\draw    (84.6,132.2) -- (70.2,140.2) ;
				\draw    (79.8,160.2) -- (69,147.8) ;
				\draw    (67,158.6) -- (79.8,160.2) ;
				\draw    (79.8,160.2) -- (66.2,170.2) ;
				\draw    (80.75,110.5) -- (89.4,111.8) ;
				\draw    (85.5,106) -- (89.4,111.8) ;
				\draw    (89.5,103.5) -- (89.4,111.8) ;
				\draw    (103.75,96) -- (100.2,103.4) ;
				\draw    (100.2,103.4) -- (97,97.5) ;
				\draw    (93.75,100.5) -- (100.2,103.4) ;
				\draw    (115.8,102.6) -- (113.5,93.75) ;
				\draw    (121.5,95) -- (115.8,102.6) ;
				\draw    (126.5,98) -- (115.8,102.6) ;
				\draw    (133,107) -- (130,98.75) ;
				\draw    (135,99.75) -- (133,107) ;
				\draw    (133,107) -- (138.75,102.25) ;
				\draw    (141.75,103.5) -- (145,109) ;
				\draw    (145,109) -- (146,105.75) ;
				\draw    (151,108.75) -- (145,109) ;
				\draw    (157.8,118.2) -- (154.5,111.75) ;
				\draw    (160.5,116) -- (157.8,118.2) ;
				\draw    (165,121) -- (157.8,118.2) ;
				\draw    (168.6,127.8) -- (166.75,122.25) ;
				\draw    (171.5,126.25) -- (168.6,127.8) ;
				\draw    (168.6,127.8) -- (174.25,130.25) ;
				\draw    (175.8,136.2) -- (180.25,140) ;
				\draw    (175.8,136.2) -- (178,135) ;
				\draw    (175.8,136.2) -- (176.25,132.75) ;
				\draw    (182.75,143.5) -- (178.6,147.4) ;
				\draw    (178.6,147.4) -- (184.5,148) ;
				\draw    (178.6,147.4) -- (186.25,153.5) ;
				\draw    (180,158.5) -- (186.75,155.5) ;
				\draw    (186.5,160.5) -- (180,158.5) ;
				\draw    (185.75,165.5) -- (180,158.5) ;
				\draw    (179.5,175.5) -- (176.6,168.2) ;
				\draw    (185,168.25) -- (176.6,168.2) ;
				\draw    (182.5,172.5) -- (176.6,168.2) ;
				\draw    (176.25,179) -- (169.8,177.8) ;
				\draw    (169.8,177.8) -- (173.5,183.75) ;
				\draw    (169.8,177.8) -- (170,185.75) ;
				\draw    (158.25,187) -- (157.25,193.25) ;
				\draw    (162.25,191.5) -- (158.25,187) ;
				\draw    (166.75,188.5) -- (158.25,187) ;
				\draw    (81.8,117.4) -- (78.5,113.25) ;
				\draw    (76.75,116.5) -- (81.8,117.4) ;
				\draw    (73.75,119.5) -- (81.8,117.4) ;
				\draw    (67.5,131.5) -- (74.2,127.4) ;
				\draw    (74.2,127.4) -- (72,121.25) ;
				\draw    (69.75,125.5) -- (74.2,127.4) ;
				\draw    (66,134) -- (70.2,140.2) ;
				\draw    (70.2,140.2) -- (63.75,140) ;
				\draw    (70.2,140.2) -- (63.5,143.5) ;
				\draw    (69,147.8) -- (63.25,146) ;
				\draw    (62,148.75) -- (69,147.8) ;
				\draw    (69,147.8) -- (61.75,152.25) ;
				\draw    (67,158.6) -- (61.25,154.25) ;
				\draw    (61.5,159) -- (67,158.6) ;
				\draw    (61.75,163) -- (67,158.6) ;
				\draw    (62.5,166) -- (66.2,170.2) ;
				\draw    (66.2,170.2) -- (62.5,171) ;
				\draw    (66.2,170.2) -- (63.5,175) ;
				\draw    (72.6,174.6) -- (64.75,177.75) ;
				\draw    (66.25,181.5) -- (72.6,174.6) ;
				\draw    (67.5,186) -- (72.6,174.6) ;
				\draw    (69,187.75) -- (75.8,187.4) ;
				\draw    (71.5,191) -- (75.8,187.4) ;
				\draw    (75.8,187.4) -- (73.75,194.25) ;
				\draw    (86.2,194.2) -- (77.5,195.75) ;
				\draw    (86.2,194.2) -- (81,199.5) ;
				\draw    (86.2,194.2) -- (85.75,201.5) ;
				\draw    (98.2,200.6) -- (88.75,202.25) ;
				\draw    (98.2,200.6) -- (94.25,204.25) ;
				\draw    (99.25,206.5) -- (98.2,200.6) ;
				\draw    (153.75,195.5) -- (147.5,193.75) ;
				\draw    (150.25,197.75) -- (147.5,193.75) ;
				\draw    (147.5,193.75) -- (145.5,199.25) ;
				\draw    (134.2,198.6) -- (141,200.5) ;
				\draw    (136,202.75) -- (134.2,198.6) ;
				\draw    (131.25,203.5) -- (134.2,198.6) ;
				\draw    (105.5,206.25) -- (110.6,203.8) ;
				\draw    (110.6,203.8) -- (110.5,206.5) ;
				\draw    (116.75,206.5) -- (110.6,203.8) ;
				\draw    (129.25,204.5) -- (123.8,200.6) ;
				\draw    (123.8,200.6) -- (119.75,206) ;
				\draw    (123.8,200.6) -- (124.75,205.25) ;
				
				\draw (141.4,159.6) node [anchor=north west][inner sep=0.75pt]    {$o$};
				\draw (165.6,90.8) node [anchor=north west][inner sep=0.75pt]    {$\Gamma =\partial \Omega $};
				\draw (37.2,71) node [anchor=north west][inner sep=0.75pt]    {$\Omega $};
				\draw (116.8,127.8) node [anchor=north west][inner sep=0.75pt]    {$\T$};

			\end{tikzpicture}}
		
		\caption{Gluing between the tree $\T$ and the exterior domain $\Omega$.}\label{fig-intro2}
	\end{figure}
	
The introduction of the above objects provides necessary ingredients
to define the following Poisson-type problem with transmission conditions: Given source terms (functions) $f_\T:\T\to\C$ and $f_\Omega:\Omega\to\C$, transmission coefficients $\alpha_0: \Gamma\to \C $, $\alpha_1\in\C\setminus\{0\}$, and a constant $c\in\C$, find functions $u_{\mathcal{T}}: \, \mathcal{T}\rightarrow \mathbb{C}$, ${u}_{\Omega}: \, {\Omega}\rightarrow \mathbb{C}$ satisfying
\begin{equation}
	\label{transm-intro}
	\left\{\begin{aligned}
		\Delta_\cT u_\cT&=f_\cT \text{ on } \cT,\\
		\Delta u_\Omega&=f_\Omega \text{ on } \Omega,\\
		\gamma^\Omega_0 u_\Omega&=\gamma^\cT_0 u_\cT \text{ on } \Gamma,\\
		\gamma^\Omega_1 u_\Omega-\alpha_1\gamma^\cT_1 u_\cT&=\alpha_0 \gamma^\Omega_0 u_\Omega \text{ on }\Gamma,\\
		u_\Omega(x)&=O(|x|^{2-m}) \text{ for } |x|\to\infty,\\
		u_\cT(o)&=c,
	\end{aligned}
	\right.	
\end{equation}
where as usual by $\gamma^\Omega_0 u_\Omega$ and $\gamma^\Omega_1 u_\Omega$ we denote the boundary trace and the (inner) normal derivative of $u_\Omega$ on $\Gamma$.
The main objective of our analysis is to define a suitable functional framework (in particular, suitable function spaces for the sources $f_\T$, $f_\Omega$ and the solutions $u_\T$, $u_\Omega$) guaranteeing the (unique) solvability of the above problem. This is done via restating it as an equivalent boundary integral equation on $\Gamma$, which involves the exterior and interior Dirichlet-to-Neumann maps; we refer to~\cite{costabel_stephan} for a similar approach in the purely Euclidean setting
exploiting the boundary integral equation formalism.
Our analysis relies on studying the mapping and positivity properties of the Dirichlet-to-Neumann operators, and allows to conclude about the well-posedness of the problem \eqref{transm-intro} under minor restrictions on the transmission coefficients. 

In addition to purely theoretical questions, we address some problems related to the numerical analysis. In particular, we study the effect of replacing the tree $\mathcal{T}$ by its counterpart with finitely many generations and study the effect of truncations on the solution of \eqref{transm-intro}.  Let us remark that this is different from \cite{jk1,jk2,jks}, where the Dirichlet-to-Neumann operator was associated to the root of the tree, and similarity properties of the tree could be used to recover its various explicit representations.   While the primary motivation of this work is purely theoretical, its results can be used, for example, in the context of modeling fractal tree antennas \cite{petko_werner} approximated by one-dimensional structures, see \cite{joly_semin} for the related analysis.

This paper is organized as follows. In Section \ref{sec:2} we describe results related to the boundary-value problems on the metric tree. In particular, we recall associated function spaces, define the Laplace operator and trace spaces (Sections \ref{sec21}--\ref{sec23}). Section \ref{sec24} is devoted to the definition of the normal derivative and the Dirichlet-to-Neumann operator on the tree. Finally, Sections \ref{sec-fin} and \ref{sec-dtn} deal with approximation of the Dirichlet-to-Neumann map by finite-dimensional operators. 
Section \ref{sec3-bvp} is dedicated to the boundary-value problems on exterior domains. 
We note that the properties of the Dirichlet-to-Neumann map on exterior domains seem less known that they should be, so we decided to review a part of the theory in Section \ref{sec3-bvp} in order to establish all necessary properties. 
The concluding Section \ref{sec:4} establishes the well-posedness of \eqref{transm-intro} through the analysis of an equivalent problem posed on the interface $\Gamma$ and of its approximated counterpart.

	\section{Boundary value problems on the metric tree}
	\label{sec:2}
	This section is dedicated to the definition and well-posedness of the boundary value problems on the metric tree. In Section \ref{sec21}, we provide a detailed definition of the metric tree $\mathcal{T}$ and related geometrical assumptions; we state the function spaces and their properties. Section \ref{sec22} is dedicated to the definition of the associated Laplace operator. In Section \ref{sec23} we review the construction of the trace map. This allows to define the conormal trace (normal derivative) and associated Dirichlet-to-Neumann map in Section \ref{sec24}. Finally, in Sections \ref{sec-fin} and \ref{sec-dtn} we show how this map can be efficiently approximated by finite-dimensional maps.
	
	\subsection{Function spaces}\label{sec21}
	
	Let $p\in\N$ with $p\ge 2$ and a root $o$ be given. We glue to $o$ an edge $e_{0,0}$ represented by an interval of length $\ell_{0,0}$, the second vertex of $e_{0,0}$ will be called $X_{0,0}$. If all $e_{n,k}$ and $X_{n,k}$ with $n\in\N_0:=\N\cup\{0\}$ and $k\in\{0,\dots,p^n-1\}$ are already constructed, then to each $X_{n,k}$ we attach
	$p$ new edges $e_{n+1,pk+j}$, with $j\in\{0,\dots,p-1\}$, having lengths $\ell_{n+1,pk+j}$,
	and the pendant vertices of $e_{n+1,pk+j}$, to be denoted by $X_{n+1,pk+j}$, will be viewed as children of $X_{n,k}$. This process continues infinitely, which creates a infinite rooted metric tree $\cT$ (see Figure~\ref{fig1} for an illustration).
	
	\begin{figure}
		\centering

		\scalebox{0.75}{\begin{tikzpicture}[x=0.75pt,y=0.75pt,yscale=-1,xscale=1]
				
				\draw [line width=2.25]    (131.4,19.2) -- (131.4,78.8) ;
				\draw [line width=2.25]    (131.4,78.8) -- (81.4,118) ;
				\draw [line width=2.25]    (181.4,118) -- (189,155.6) ;
				\draw [line width=2.25]    (81.4,118) -- (36.2,152.8) ;
				\draw [line width=2.25]    (75.8,154.4) -- (81.4,118) ;
				\draw [line width=2.25]    (181.4,118) -- (131.4,78.8) ;
				\draw [line width=2.25]    (181.4,118) -- (225,154.8) ;
				\draw  [fill={rgb, 255:red, 0; green, 0; blue, 0 }  ,fill opacity=1 ] (128.8,78.8) .. controls (128.8,77.36) and (129.96,76.2) .. (131.4,76.2) .. controls (132.84,76.2) and (134,77.36) .. (134,78.8) .. controls (134,80.24) and (132.84,81.4) .. (131.4,81.4) .. controls (129.96,81.4) and (128.8,80.24) .. (128.8,78.8) -- cycle ;
				\draw  [fill={rgb, 255:red, 0; green, 0; blue, 0 }  ,fill opacity=1 ] (78.8,118) .. controls (78.8,116.56) and (79.96,115.4) .. (81.4,115.4) .. controls (82.84,115.4) and (84,116.56) .. (84,118) .. controls (84,119.44) and (82.84,120.6) .. (81.4,120.6) .. controls (79.96,120.6) and (78.8,119.44) .. (78.8,118) -- cycle ;
				\draw  [fill={rgb, 255:red, 0; green, 0; blue, 0 }  ,fill opacity=1 ] (73.2,154.4) .. controls (73.2,152.96) and (74.36,151.8) .. (75.8,151.8) .. controls (77.24,151.8) and (78.4,152.96) .. (78.4,154.4) .. controls (78.4,155.84) and (77.24,157) .. (75.8,157) .. controls (74.36,157) and (73.2,155.84) .. (73.2,154.4) -- cycle ;
				\draw  [fill={rgb, 255:red, 0; green, 0; blue, 0 }  ,fill opacity=1 ] (33.6,152.8) .. controls (33.6,151.36) and (34.76,150.2) .. (36.2,150.2) .. controls (37.64,150.2) and (38.8,151.36) .. (38.8,152.8) .. controls (38.8,154.24) and (37.64,155.4) .. (36.2,155.4) .. controls (34.76,155.4) and (33.6,154.24) .. (33.6,152.8) -- cycle ;
				\draw  [fill={rgb, 255:red, 0; green, 0; blue, 0 }  ,fill opacity=1 ] (178.8,118) .. controls (178.8,116.56) and (179.96,115.4) .. (181.4,115.4) .. controls (182.84,115.4) and (184,116.56) .. (184,118) .. controls (184,119.44) and (182.84,120.6) .. (181.4,120.6) .. controls (179.96,120.6) and (178.8,119.44) .. (178.8,118) -- cycle ;
				\draw  [fill={rgb, 255:red, 0; green, 0; blue, 0 }  ,fill opacity=1 ] (128.8,16.6) .. controls (128.8,15.16) and (129.96,14) .. (131.4,14) .. controls (132.84,14) and (134,15.16) .. (134,16.6) .. controls (134,18.04) and (132.84,19.2) .. (131.4,19.2) .. controls (129.96,19.2) and (128.8,18.04) .. (128.8,16.6) -- cycle ;
				\draw  [fill={rgb, 255:red, 0; green, 0; blue, 0 }  ,fill opacity=1 ] (186.4,155.6) .. controls (186.4,154.16) and (187.56,153) .. (189,153) .. controls (190.44,153) and (191.6,154.16) .. (191.6,155.6) .. controls (191.6,157.04) and (190.44,158.2) .. (189,158.2) .. controls (187.56,158.2) and (186.4,157.04) .. (186.4,155.6) -- cycle ;
				\draw  [fill={rgb, 255:red, 0; green, 0; blue, 0 }  ,fill opacity=1 ] (222.4,154.8) .. controls (222.4,153.36) and (223.56,152.2) .. (225,152.2) .. controls (226.44,152.2) and (227.6,153.36) .. (227.6,154.8) .. controls (227.6,156.24) and (226.44,157.4) .. (225,157.4) .. controls (223.56,157.4) and (222.4,156.24) .. (222.4,154.8) -- cycle ;
				\draw  [dash pattern={on 0.84pt off 2.51pt}]  (91,120) -- (171.4,120) ;
				\draw  [dash pattern={on 0.84pt off 2.51pt}]  (43.4,153.6) -- (69.4,153.6) ;
				\draw  [dash pattern={on 0.84pt off 2.51pt}]  (195,155.2) -- (219.8,154.8) ;
				\draw [line width=2.25]  [dash pattern={on 2.53pt off 3.02pt}]  (36.2,152.8) -- (15.8,190.8) ;
				\draw [line width=2.25]  [dash pattern={on 2.53pt off 3.02pt}]  (36.2,152.8) -- (49.4,190.8) ;
				\draw [line width=2.25]  [dash pattern={on 2.53pt off 3.02pt}]  (75.8,154.4) -- (65.8,191.2) ;
				\draw [line width=2.25]  [dash pattern={on 2.53pt off 3.02pt}]  (75.8,154.4) -- (91.8,191.6) ;
				\draw [line width=2.25]  [dash pattern={on 2.53pt off 3.02pt}]  (188.6,154.8) -- (168.2,192.8) ;
				\draw [line width=2.25]  [dash pattern={on 2.53pt off 3.02pt}]  (188.6,154.8) -- (198.2,189.6) ;
				\draw [line width=2.25]  [dash pattern={on 2.53pt off 3.02pt}]  (225,154.8) -- (215,191.6) ;
				\draw [line width=2.25]  [dash pattern={on 2.53pt off 3.02pt}]  (225,154.8) -- (232.2,171.54) -- (241,192) ;
				\draw  [dash pattern={on 0.84pt off 2.51pt}]  (26.6,184.8) -- (41.4,185.2) ;
				\draw  [dash pattern={on 0.84pt off 2.51pt}]  (72.6,186) -- (84.6,186) ;
				\draw  [dash pattern={on 0.84pt off 2.51pt}]  (180.2,183.6) -- (185,183.6) -- (192.2,183.6) ;
				\draw  [dash pattern={on 0.84pt off 2.51pt}]  (221.8,188) -- (226.6,188) -- (233.8,188) ;
				
				\draw (116.9,3.8) node [anchor=north west][inner sep=0.75pt]    {$o$};
				\draw (136.4,28.2) node [anchor=north west][inner sep=0.75pt]    {$e_{0,0}$};
				\draw (98,58.6) node [anchor=north west][inner sep=0.75pt]    {$X_{0,0}$};
				\draw (52,99) node [anchor=north west][inner sep=0.75pt]    {$X_{1,0}$};
				\draw (185,99) node [anchor=north west][inner sep=0.75pt]    {$X_{1,p-1}$};
				\draw (82,81) node [anchor=north west][inner sep=0.75pt]    {$e_{1,0}$};
				\draw (154,82) node [anchor=north west][inner sep=0.75pt]    {$e_{1,p-1}$};
				\draw (1.6,145) node [anchor=north west][inner sep=0.75pt]    {$X_{2,0}$};
				\draw (233.2,145) node [anchor=north west][inner sep=0.75pt]    {$X_{2,p^{2} -1}$};
				\draw (80,145) node [anchor=north west][inner sep=0.75pt]    {$X_{2,p-1}$};
				\draw (128.4,145) node [anchor=north west][inner sep=0.75pt]    {$X_{2,p( p-1)}$};
				\draw (30,125) node [anchor=north west][inner sep=0.75pt]    {$e_{2,0}$};
				\draw (83.4,125) node [anchor=north west][inner sep=0.75pt]    {$e_{2,p-1}$};
				\draw (128.6,125) node [anchor=north west][inner sep=0.75pt]    {$e_{2,p( p-1)}$};
				\draw (209.4,125) node [anchor=north west][inner sep=0.75pt]    {$e_{2,p^{2} -1}$};

			\end{tikzpicture}}
		\caption{The tree $\cT$}\label{fig1}
	\end{figure}

	The subtree of $\cT$ starting at $X_{n,k}$, i.e. the subtree spanned by the offsping of $X_{n,k}$ (the children, the children of the children etc.), will be denoted by $\cT_{n,k}$. If $j\in\{0,\dots,p-1\}$, denote by $\cT^j_{n,k}$ the subtree $e_{n+1,pk+j}\cup \cT_{n+1,pk+j}$, which has the same combinatorial structure as $\cT$ with $X_{n,k}$ considered as a root (see Fig.~\ref{fig2}). Note that $\T_{n,k}$ represents the union of $\T^j_{n,k}$ with $j\in\{0,\dots,p-1\}$.

	\begin{figure}
		
		\centering
		
		\scalebox{0.7}{\begin{tikzpicture}[x=0.75pt,y=0.75pt,yscale=-1,xscale=1]
				
				\draw [color={rgb, 255:red, 155; green, 155; blue, 155 }  ,draw opacity=1 ][line width=3]    (138.6,82.6) -- (187.8,8.2) ;
				\draw  [fill={rgb, 255:red, 0; green, 0; blue, 0 }  ,fill opacity=1 ] (133.2,82.6) .. controls (133.2,79.62) and (135.62,77.2) .. (138.6,77.2) .. controls (141.58,77.2) and (144,79.62) .. (144,82.6) .. controls (144,85.58) and (141.58,88) .. (138.6,88) .. controls (135.62,88) and (133.2,85.58) .. (133.2,82.6) -- cycle ;
				\draw [line width=3]    (138.6,82.6) -- (84.2,134.2) ;
				\draw  [fill={rgb, 255:red, 0; green, 0; blue, 0 }  ,fill opacity=1 ] (78.8,134.2) .. controls (78.8,131.22) and (81.22,128.8) .. (84.2,128.8) .. controls (87.18,128.8) and (89.6,131.22) .. (89.6,134.2) .. controls (89.6,137.18) and (87.18,139.6) .. (84.2,139.6) .. controls (81.22,139.6) and (78.8,137.18) .. (78.8,134.2) -- cycle ;
				\draw [line width=3]    (177.4,135.8) -- (138.6,82.6) ;
				\draw  [fill={rgb, 255:red, 0; green, 0; blue, 0 }  ,fill opacity=1 ] (172,135.8) .. controls (172,132.82) and (174.42,130.4) .. (177.4,130.4) .. controls (180.38,130.4) and (182.8,132.82) .. (182.8,135.8) .. controls (182.8,138.78) and (180.38,141.2) .. (177.4,141.2) .. controls (174.42,141.2) and (172,138.78) .. (172,135.8) -- cycle ;
				\draw [line width=1.5]  [dash pattern={on 1.69pt off 2.76pt}]  (160.6,131.8) -- (100.6,132.2) ;
				\draw [line width=3]    (84.2,134.2) -- (43.4,197.4) ;
				\draw [line width=3]    (84.2,134.2) -- (112.2,198.2) ;
				\draw [line width=3]    (177.4,135.8) -- (205.4,199.8) ;
				\draw [line width=3]    (177.4,135.8) -- (149.8,199.8) ;
				\draw [line width=1.5]  [dash pattern={on 1.69pt off 2.76pt}]  (96.2,190.2) -- (58.2,190.6) ;
				\draw [line width=1.5]  [dash pattern={on 1.69pt off 2.76pt}]  (194.2,190.2) -- (162.2,190.6) ;
				\draw [color={rgb, 255:red, 0; green, 0; blue, 0 }  ,draw opacity=1 ] [dash pattern={on 0.84pt off 2.51pt}]  (20.2,219) .. controls (15.4,141) and (7.8,55) .. (121.8,38.2) .. controls (235.8,21.4) and (319.8,161.8) .. (259.8,229.8) ;
				\draw [color={rgb, 255:red, 155; green, 155; blue, 155 }  ,draw opacity=1 ][line width=3]    (470.8,83.8) -- (520,9.4) ;
				\draw [color={rgb, 255:red, 155; green, 155; blue, 155 }  ,draw opacity=1 ][fill={rgb, 255:red, 128; green, 128; blue, 128 }  ,fill opacity=1 ][line width=3]    (470.8,83.8) -- (429,138.2) ;
				\draw [line width=3]    (483,139.4) -- (470.8,83.8) ;
				\draw  [fill={rgb, 255:red, 0; green, 0; blue, 0 }  ,fill opacity=1 ] (477.6,139.4) .. controls (477.6,136.42) and (480.02,134) .. (483,134) .. controls (485.98,134) and (488.4,136.42) .. (488.4,139.4) .. controls (488.4,142.38) and (485.98,144.8) .. (483,144.8) .. controls (480.02,144.8) and (477.6,142.38) .. (477.6,139.4) -- cycle ;
				\draw [line width=3]    (483,139.4) -- (511,203.4) ;
				\draw [line width=3]    (483,139.4) -- (455.4,203.4) ;
				\draw [line width=1.5]  [dash pattern={on 1.69pt off 2.76pt}]  (500,194.2) -- (468,194.6) ;
				\draw [color={rgb, 255:red, 0; green, 0; blue, 0 }  ,draw opacity=1 ] [dash pattern={on 0.84pt off 2.51pt}]  (448.2,215.8) .. controls (477,95) and (396.2,40.2) .. (455.2,39) .. controls (514.2,37.8) and (557,107) .. (559,122.2) .. controls (561,137.4) and (554.6,177.8) .. (537,215.4) ;
				\draw [color={rgb, 255:red, 155; green, 155; blue, 155 }  ,draw opacity=1 ][fill={rgb, 255:red, 155; green, 155; blue, 155 }  ,fill opacity=1 ][line width=3]    (473.4,85) -- (575.8,137.4) ;
				\draw  [color={rgb, 255:red, 155; green, 155; blue, 155 }  ,draw opacity=1 ][fill={rgb, 255:red, 155; green, 155; blue, 155 }  ,fill opacity=1 ] (423.6,138.2) .. controls (423.6,135.22) and (426.02,132.8) .. (429,132.8) .. controls (431.98,132.8) and (434.4,135.22) .. (434.4,138.2) .. controls (434.4,141.18) and (431.98,143.6) .. (429,143.6) .. controls (426.02,143.6) and (423.6,141.18) .. (423.6,138.2) -- cycle ;
				\draw  [fill={rgb, 255:red, 0; green, 0; blue, 0 }  ,fill opacity=1 ] (465.4,83.8) .. controls (465.4,80.82) and (467.82,78.4) .. (470.8,78.4) .. controls (473.78,78.4) and (476.2,80.82) .. (476.2,83.8) .. controls (476.2,86.78) and (473.78,89.2) .. (470.8,89.2) .. controls (467.82,89.2) and (465.4,86.78) .. (465.4,83.8) -- cycle ;
				\draw  [color={rgb, 255:red, 155; green, 155; blue, 155 }  ,draw opacity=1 ][fill={rgb, 255:red, 155; green, 155; blue, 155 }  ,fill opacity=1 ] (570.4,137.4) .. controls (570.4,134.42) and (572.82,132) .. (575.8,132) .. controls (578.78,132) and (581.2,134.42) .. (581.2,137.4) .. controls (581.2,140.38) and (578.78,142.8) .. (575.8,142.8) .. controls (572.82,142.8) and (570.4,140.38) .. (570.4,137.4) -- cycle ;
				\draw [color={rgb, 255:red, 155; green, 155; blue, 155 }  ,draw opacity=1 ][fill={rgb, 255:red, 128; green, 128; blue, 128 }  ,fill opacity=1 ][line width=3]    (575.8,137.4) -- (598.6,200.2) ;
				\draw [color={rgb, 255:red, 155; green, 155; blue, 155 }  ,draw opacity=1 ][fill={rgb, 255:red, 128; green, 128; blue, 128 }  ,fill opacity=1 ][line width=3]    (429,138.2) -- (412.2,199.8) ;
				\draw [color={rgb, 255:red, 155; green, 155; blue, 155 }  ,draw opacity=1 ][fill={rgb, 255:red, 128; green, 128; blue, 128 }  ,fill opacity=1 ][line width=3]    (575.8,137.4) -- (651,198.2) ;
				\draw [color={rgb, 255:red, 155; green, 155; blue, 155 }  ,draw opacity=1 ][fill={rgb, 255:red, 128; green, 128; blue, 128 }  ,fill opacity=1 ][line width=3]    (429,138.2) -- (367,199.4) ;
				
				\draw (99.6,56.6) node [anchor=north west][inner sep=0.75pt]    {$X_{n,k}$};
				\draw (25.6,118.8) node [anchor=north west][inner sep=0.75pt]    {$X_{n+1,pk}$};
				\draw (186,122.8) node [anchor=north west][inner sep=0.75pt]    {$X_{n+1,pk+p-1}$};
				\draw (196,86.6) node [anchor=north west][inner sep=0.75pt]    {\Large $\T_{n,k}$};
				\draw (441.4,56.2) node [anchor=north west][inner sep=0.75pt]    {$X_{n,k}$};
				\draw (491.8,126.8) node [anchor=north west][inner sep=0.75pt]    {$X_{n+1,pk+j}$};
				\draw (495.4,66.6) node [anchor=north west][inner sep=0.75pt]    {\large $\T_{n,k}^{j}$};

			\end{tikzpicture}
		}
		
		\caption{The subtrees $\T_{n,k}$ and $\T^j_{n,k}$.}\label{fig2}
	\end{figure}
	
%
	
	For subsequent constructions it will be useful to introduce coordinates on $\cT$. Denote by $L_{n,k}$ the distance between the root $o$ and  $X_{n,k}$, i.e. the length of the unique path between $o$ and $X_{n,k}$ obtained by summing the lengths of all edges in the path. Then by $(n,k,t)$ with $t\in[L_{n,k}-\ell_{n,k},L_{n,k}]$ we denote the point of $e_{n,k}$ which is at the distance  $L_{n,k}-t$ from $X_{n,k}$. In this notation,
	\[
	X_{n,k}=(n,k,L_{n,k})=(n+1,pk+j,L_{n,k}) \text{ for any } j\in\{0,\dots,p-1\}.
	\]
	Let $w:\cT\to(0,\infty)$ be a locally bounded measurable function, which will be used
	as an integration weight: for $f:\cT\to\C$ one defines
	\[
	\int_\cT f\dd\mu:=\sum_{n=0}^\infty\sum_{k=0}^{p^n-1} \int_{L_{n,k}-\ell_{n,k}}^{L_{n,k}}f(n,k,t)\,w(n,k,t)\dd t,
	\]
	then
	\[
	L^2(\cT):=\big\{f:\cT\to\C:\ \|f\|^2_{L^2(\cT)}:=\int_\cT |f|^2\dd\mu<\infty\big\}.
	\quad
	\]
	Due to the above definition the set of vertices has zero measure. Therefore, each measurable function $f:\cT\to\C$
	can be identified with a family of functions $(f_{n,k})$,
	\begin{gather*}
		f_{n,k}:=f(n,k,\cdot):\ (L_{n,k}-\ell_{n,k},L_{n,k})\to \C,\quad
		n\in\N_0,
		\quad
		k\in\{0,\dots,p^n-1\}.
	\end{gather*}
	Then $f=(f_{n,k})$ belongs to $L^2(\cT)$ if and only if
	\[
	\|f\|^2_{L^2(\cT)}:=\sum_{n=0}^\infty\sum_{k=0}^{p^n-1} \int_{L_{n,k}-\ell_{n,k}}^{L_{n,k}}\big|f_{n,k}(t)\big|^2w_{n,k}(t)\dd t<\infty.
	\]
	A function $f:\T\to\C$ is called continuous if $t\mapsto f_{n,k}(t)$ is continuous for all $n, k$ and, additionally, $f$ is continuous in the vertices. 
	In other words, $f_{n,k}(L_{n,k}^-)=f_{n+1,pk+j}(L_{n,k}^+)$ for all $n,k,j$. 
	If $f=(f_{n,k})$ is such that all $f_{n,k}$ have locally integrable distributional derivatives $f'_{n,k}$, we denote $f':=(f'_{n,k})$.

	The first Sobolev space $H^1(\cT)$ on $\cT$ is then introduced as
	\begin{gather*}
		H^1(\cT):=\{f\in L^2(\cT): f \text{ is continuous with } f'\in  L^2(\cT)\},\\
		\|f\|^2_{H^1(\cT)}:=\|f\|^2_{L^2(\cT)}+\|f'\|^2_{L^2(\cT)}.
	\end{gather*}
	Moreover, we denote
	\begin{align*}
		H^1_c(\cT)&:=\{f\in H^1(\cT): \text{ there exists $N\in\N$ such that } f_{n,k}\equiv 0\\
		&\qquad\qquad\qquad \text{ for all $(n,k)$ with $n>N$}\},\\
		H^1_0(\cT)&:=\text{the closure of $H^1_c(\cT)$ in $H^1(\cT)$,}\\
		\Tilde H^1(\cT)&:=\big\{ f\in H^1(\cT):\ f(o)=0\big\},\\
		\Tilde H^1_0(\cT)&:=\big\{ f\in H^1_0(\cT):\ f(o)=0\big\},\\
		\Tilde H^1_c(\cT)&:=\big\{ f\in H^1_c(\cT):\ f(o)=0\big\},
	\end{align*}
and remark that $\Tilde H^1_c(\cT)$ is a dense subset of $\Tilde H^1_0(\cT)$.
	
While the above definitions make sense for any choice of lengths $\ell_{n,k}$ and weights $w$, we introduce
additional conditions in order to have a more controllable global structure of $\cT$ and nice properties
of the associated Sobolev spaces. First, we assume that the weight function $w$ is constant on each edge, which 
induces the edge weigths
\[
\omega_{n,k}:=w|_{e_{n,k}}\in(0,\infty).
\]
The above $p$-adic metric tree $\cT$ equipped with edge lengths $\ell_{n,k}$ and edge weights $\omega_{n,k}$ will
be sometimes denoted as
\[
\cT^p\big((\ell_{n,k}),(\omega_{n,k})\big).
\]
In what follows we will always assume that there are some $\omega>0$ and $\ell\in(0,1)$ with
\begin{equation}
	\label{eq-lpa}
\ell < \omega p < \frac{1}{\ell}
\end{equation} 
and $C\ge 1$ such that for every $(n,k)$ there holds
		\begin{equation}
			\label{eq-ccc}
		\frac{1}{C} \ell^n \le  \ell_{n,k} \le C \ell^n \quad \text{ and } \quad \frac{1}{C} \omega^n \le  \omega_{n,k} \le C \omega^n.
		\end{equation}
The above assumption on the edge lengths guarantees, in particular, that the height of the tree is finite (which means that the distance to the root is bounded). On the other hand, the assumption on the weights combined with the upper bound in \eqref{eq-lpa} is a necessary and sufficient condition for constant functions to belong to $L^2(\cT)$, cf. \cite[proof of Theorem 3.10]{jks}. The role of the remaining assumptions will be discussed further in the paper. 

Moreover, if the stronger condition
\[
\ell_{n,k}=L_0 \ell^n,\quad
\omega_{n,k}=\omega_0 \omega^n
\text{ for all $(n,k)$}
\]		
is fulfilled (with some fixed $L_0>0$ and $\omega_0>0$), then the tree will be called \emph{geometric}	and denoted as
\begin{equation}
	\label{tgeom}
\TT^p\big(L_0,\ell,\omega_0,\omega\big).
\end{equation}
The following two results will show that the spaces $H^1(\cT)$, $H^1_0(\cT)$, $L^2(\cT)$ behave similarly to their counterparts on finite intervals of $\mathbb{R}$.  First of all, due to the assumption \eqref{eq-ccc} on $\ell$ and $\omega$, the following important result holds true: 
\begin{lem}\label{lemcomp}
The embedding $H^1(\T)\hookrightarrow L^2(\T)$ is compact, and
there is a constant $C_0>0$ such that
\begin{equation}
	\label{poincare}
	\|f\|_{L^2(\cT)} \leq C_0 \|f'\|_{L^2(\cT)} \qquad \text{ for all } f \in \widetilde{H}^1(\cT),
\end{equation}
and 
\begin{equation}
	\label{skal1}
	\langle f,g\rangle_{\Tilde H^1(\T)}:=\langle f',g'\rangle_{L^2(\T)}\equiv\int_\T f'\overline{g'}\dd \mu
\end{equation}
is a scalar product on $\Tilde H^1(\T)$ which is equivalent to the induced scalar product inherited from $H^1(\T)$.
\end{lem}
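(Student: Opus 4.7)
The plan is to first establish the Poincaré inequality \eqref{poincare} by a direct path-integral argument, then derive the equivalence of the scalar products as an immediate consequence, and finally prove the compactness of $H^1(\T)\hookrightarrow L^2(\T)$ by combining a finite-generation truncation with the classical Rellich theorem on intervals, using a variant of the Poincaré estimate to control the tail.

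For \eqref{poincare}, I fix $f\in\tilde H^1(\T)$ and use that for $x=(n,k,t)$ the fundamental theorem of calculus along the unique path from $o$ to $x$ yields $f(x)=\int_0^t f'(s)\,\dd s$, where the integral is with respect to the unweighted arc length. Cauchy--Schwarz then gives $|f(x)|^2\le L\int_{\text{path}(o,x)}|f'(s)|^2\,\dd s\le L\sum_{m=0}^{n}\omega_{m,k_m}^{-1}\int_{e_{m,k_m}}|f'|^2\,\dd\mu$, where $L:=\sup_{n,k}L_{n,k}\le C\sum_{n\ge 0}\ell^n<\infty$ thanks to $\ell<1$. Integrating this pointwise bound against $\dd\mu$ and exchanging the order of summation, each edge $e_{m,j}$ contributes only when $x$ lies in the set of descendants of $e_{m,j}$, whose total $\mu$-measure is controlled by \eqref{eq-ccc} via a geometric series in $p\omega\ell$; this series converges \emph{precisely} because of the upper bound $\omega p<1/\ell$ in \eqref{eq-lpa}. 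Combined with $\omega_{m,j}^{-1}\le C\omega^{-m}$ from \eqref{eq-ccc}, one arrives at $\|f\|_{L^2(\T)}^2\le C_0^2\,\|f'\|_{L^2(\T)}^2$ with an explicit $C_0$.

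Given \eqref{poincare}, the statement for \eqref{skal1} is routine: sesquilinearity is clear, while $\langle f,f\rangle_{\tilde H^1(\T)}=0$ forces $f'=0$ $\mu$-almost everywhere, hence $f$ is constant on each edge; by continuity across vertices and $f(o)=0$ this forces $f\equiv 0$. Equivalence with the inherited $H^1$-scalar product follows from $\|f'\|_{L^2(\T)}^2\le\|f\|_{H^1(\T)}^2$ on the one hand and from $\|f\|_{H^1(\T)}^2\le(1+C_0^2)\|f'\|_{L^2(\T)}^2$ via \eqref{poincare} on the other.

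For the compact embedding I introduce, for each generation $N$, the linear operator $\Pi_N\colon H^1(\T)\to H^1(\T)$ defined by $\Pi_Nf=f$ on the finite subtree $\T_N$ of edges up to generation $N$, and $\Pi_Nf\equiv f(X_{N,k})$ (constant) on each $\T_{N,k}$; continuity at $X_{N,k}$ ensures $\Pi_Nf\in H^1(\T)$. Since $f-\Pi_Nf$ vanishes at the root $X_{N,k}$ of every subtree $\T_{N,k}$, rerunning the Poincaré argument \emph{inside} each $\T_{N,k}$ — with $L$ replaced by the subtree height, which is at most $C\ell^{N+1}/(1-\ell)$, and with the sum over generations now starting at $N+1$, producing an extra factor $\ell^{N+1}$ — yields a tail estimate of the form $\|f-\Pi_Nf\|_{L^2(\T)}^2\le C_1\,\ell^{2N}\,\|f'\|_{L^2(\T)}^2$ uniform in $f$. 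Then, for a bounded sequence $(f_n)\subset H^1(\T)$, the restrictions $(f_n|_{\T_N})$ are bounded in $H^1$ of the \emph{finite} union of edges $\T_N$, so classical Rellich plus a diagonal extraction produces a subsequence $(f_{n_k})$ convergent in $L^2(\T_M)$ for every $M$ and at each vertex $X_{M,k}$ (the point evaluations being continuous on $H^1$ of a single edge). Splitting
\[
\|f_{n_j}-f_{n_k}\|_{L^2(\T)}\le \|f_{n_j}-\Pi_Nf_{n_j}\|_{L^2(\T)}+\|\Pi_Nf_{n_j}-\Pi_Nf_{n_k}\|_{L^2(\T)}+\|\Pi_Nf_{n_k}-f_{n_k}\|_{L^2(\T)}
\]
the outer terms are absorbed by choosing $N$ large (tail estimate), while the middle term depends only on the restrictions to $\T_N$ and on the $p^N$ vertex values $f_{n_\ell}(X_{N,k})$, hence is small for $j,k$ large, yielding the Cauchy property.

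The main technical obstacle is the bookkeeping in the double sum arising in the Poincaré estimate, where the condition $\ell<1$ (for finite tree height) and the condition $p\omega\ell<1$ (for summability of measures of descendants of a fixed edge) must cooperate; reusing the very same computation on subtrees to extract the decay factor $\ell^{2N}$ is what links the Poincaré inequality to the compactness tail bound and makes the whole argument fit together.
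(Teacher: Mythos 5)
Your proof is correct, but it takes a genuinely different route from the paper. The paper's proof is essentially a transfer argument: it quotes the compactness of the embedding for the reference geometric tree $\TT^p(1,\ell,1,\omega)$ from \cite[Sec.~3.5]{jks}, and the Poincar\'e inequality together with the equivalence of scalar products from \cite[Sec.~2.3]{Embedded Trace}, and then pushes these results to the general tree satisfying \eqref{eq-ccc} by observing that the edge-dilation map $f\mapsto f\circ\varphi$ is simultaneously an isomorphism $L^2(\cT)\to L^2(\TT)$, $H^1(\cT)\to H^1(\TT)$, $\widetilde H^1(\cT)\to\widetilde H^1(\TT)$. You instead prove everything from first principles. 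Your Poincar\'e argument --- fundamental theorem of calculus along the geodesic from $o$, Cauchy--Schwarz with the finite tree height $L\lesssim\sum\ell^n$, then exchanging the order of summation so that each edge $e_{m,j}$ is weighted by $\omega_{m,j}^{-1}\cdot\mu(e_{m,j}\cup\cT_{m,j})\lesssim\ell^m/(1-p\omega\ell)$ --- correctly unfolds what the cited references establish, and it isolates cleanly where $\ell<1$ (finite height) and $p\omega\ell<1$ (summable subtree measures) enter. Your compactness proof via the freezing operators $\Pi_N$ and the tail bound $\|f-\Pi_Nf\|_{L^2}^2\lesssim\ell^{2N}\|f'\|_{L^2}^2$ (obtained by rerunning the same Poincar\'e computation on each $\cT_{N,k}$), combined with classical Rellich on the finite sections and a diagonal extraction, is also sound; it is in essence what \cite{jks} does for the reference tree. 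One small point worth making explicit is that $\Pi_N$ does land in $H^1(\cT)$: the frozen part of $\Pi_Nf$ is a constant on each $\cT_{N,k}$, and constants lie in $L^2(\cT_{N,k})$ again because $p\omega\ell<1$. The trade-off is the expected one: the paper's argument is economical but delegates the analysis to the references and to a change of variables, while yours is longer but self-contained and, as a by-product, makes the dependence of $C_0$ on $\ell$, $\omega$, $p$, $C$ explicit.
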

\begin{proof}
For the case $\cT=\TT^p(1,\ell,1,\omega)=:\mathbb{T}$ the compactness of the embedding is shown in \cite[Sec.~3.5]{jks}
and the assertions on the ineqality  \eqref{poincare} and the scalar product are shown in \cite[Sec.~2.3]{Embedded Trace}. 
Now we consider the dilation operator
\begin{gather*}
\varphi:\ \TT\to\cT,\qquad
\varphi(n,k,t)=\Big(n,k,L_{n,k}-\ell_{n,k}+\dfrac{t-t_{n-1}}{\ell^n}\ell_{n,k}\Big),\\
t_{-1}:=0,\quad t_n:=\sum_{k=0}^n \ell^k \text{ for }n\in\N_0,
\end{gather*}
and note that due to the assumptions \eqref{eq-ccc} the map $f\mapsto f\circ\varphi$ defines isomorphisms $L^2(\cT)\to L^2(\TT)$, $H^1(\cT)\to H^1(\TT)$ and $\Tilde H^1(\T)\to \Tilde H^1(\TT)$, see \cite[Sec.~4.3]{Embedded Trace}, which extends the both results to $\cT$.
\end{proof}
The next result will be elaborated later, cf. Theorem \ref{thm-trace}, however, we state it here to facilitate the understanding of the function spaces.
	\begin{lem}
		\label{lem:func_spaces}
$H^1_0(\mathcal{T})\subsetneq H^1(\mathcal{T})$ and $\widetilde{H}^1_0(\mathcal{T})\subsetneq \widetilde{H}^1(\mathcal{T})$.	
	\end{lem}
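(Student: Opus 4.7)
The plan is to exhibit a single element of $H^1(\T)$ that cannot be approximated in the $H^1(\T)$-norm by elements of $H^1_c(\T)$, and then to use it as a witness for both strict inclusions. The natural candidate is the constant function $\mathbf{1}$, which belongs to $H^1(\T)$: it is continuous with vanishing distributional derivative, and
\[
\int_\T 1\,\mathrm{d}\mu = \sum_{n=0}^\infty\sum_{k=0}^{p^n-1}\ell_{n,k}\omega_{n,k} \le C^2 \sum_{n=0}^\infty (p\omega\ell)^n < \infty
\]
by \eqref{eq-ccc} and the upper bound $p\omega<1/\ell$ in \eqref{eq-lpa}.

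To establish $H^1_0(\T)\subsetneq H^1(\T)$, the plan is to invoke the trace map $\gamma_0^\T: H^1(\T)\to L^2(\Gamma)$ announced in Theorem~\ref{thm-trace}, which is continuous. Its construction (as a limit-at-infinity over boundary paths, in the spirit of \cite{Embedded Trace,jks,msv}) yields $\gamma_0^\T f = 0$ for every $f\in H^1_c(\T)$, since any such $f$ vanishes identically beyond a finite generation. Continuity of $\gamma_0^\T$ together with the definition of $H^1_0(\T)$ as the closure of $H^1_c(\T)$ extends this vanishing to all of $H^1_0(\T)$. On the other hand, $\gamma_0^\T\mathbf{1}$ equals the non-zero constant $1$ on $\Gamma$, so $\mathbf{1}\in H^1(\T)\setminus H^1_0(\T)$.

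For the second inclusion I would modify $\mathbf{1}$ so as to satisfy the vanishing condition at the root. Let $g$ be the continuous function defined on the root edge by $g_{0,0}(t)=t/\ell_{0,0}$ (linear from $0$ at $o$ to $1$ at $X_{0,0}$), and $g\equiv 1$ on every other edge. Then $g\in H^1(\T)$, since $g$ coincides with $\mathbf{1}$ off $e_{0,0}$ while $g$ and $g'$ are uniformly bounded on $e_{0,0}$; moreover $g(o)=0$, so $g\in\Tilde H^1(\T)$. The difference $g-\mathbf{1}$ is continuous, supported on the single edge $e_{0,0}$, and therefore lies in $H^1_c(\T)\subset H^1_0(\T)$. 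If $g$ belonged to $\Tilde H^1_0(\T)\subset H^1_0(\T)$, we would obtain $\mathbf{1}=g-(g-\mathbf{1})\in H^1_0(\T)$, contradicting the first part.

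The main obstacle is to ensure that the trace $\gamma_0^\T$ genuinely vanishes on $H^1_c(\T)$ while not vanishing on $\mathbf{1}$; both facts rely on the concrete construction of $\gamma_0^\T$ provided by Theorem~\ref{thm-trace}, which is precisely why the authors state the lemma at this point but defer its detailed justification. Once the trace map is available, the remaining work is the purely algebraic manipulation sketched above.
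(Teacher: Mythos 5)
Your proof is correct and rests, as the paper intends, on the trace map of Theorem~\ref{thm-trace}: you use its well-definedness and continuity, observe (from the explicit formula) that it annihilates $H^1_c(\mathcal{T})$ and hence $H^1_0(\mathcal{T})$, and exhibit $\gamma_0^\mathcal{T}\mathbf{1}=\one_\Gamma\neq 0$. The paper's implicit argument would more directly read off both strict inclusions from the surjectivity of $\gamma_0^\mathcal{T}$ combined with $\ker\gamma_0^\mathcal{T}=H^1_0(\mathcal{T})$ (and, for the tilde version, the identity \eqref{eq-hhh} or Lemma~\ref{lem-vg}); your version instead produces the concrete witnesses $\mathbf{1}$ and its root-edge modification $g$, and the observation that $g-\mathbf{1}\in H^1_c(\mathcal{T})$ reduces the second strict inclusion to the first in a pleasantly elementary way. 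Both routes are valid and lean on the same key theorem, so the difference is stylistic rather than structural.

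One small point worth making explicit: the identity $\gamma_0^\mathcal{T}\mathbf{1}=\one_\Gamma$ uses that $\sum_{K}\one_{\Gamma_{N,K}}=\one_\Gamma$ a.e.\ for each $N$, which follows from items 2 and 3 of Definition~\ref{defn2} (the $\Gamma_{N,K}$ are pairwise disjoint and exhaust $\Gamma$ up to a null set). With that, your argument is complete.
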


\subsection{Laplacian}\label{sec22}
We will use the dual space
\[
H^{-1}(\T):=\big(\widetilde{H}^1_0(\T)\big)'.
\]

\begin{defn}\label{defn-laplacian}
For $f\in H^1(\T)$ define the Laplacian $\Delta_\T f \in H^{-1}(\cT)$ of $f$ by
\begin{equation}
	\label{eq-deltat}
	\big(\Delta_\T f, g \big)_{H^{-1}(\T),\widetilde{H}^{1}_0(\T)} := - \int_\T f' g' \dd\mu\equiv - \langle f, \overline{g} \rangle_{\widetilde{H}^1(\T)} \text{ for all } g\in \widetilde{H}^1_0(\T).
\end{equation}
We also define 
\[
H^1_\Delta(\T) :=\Big\{ f \in H^1(\T):\ \Delta_\T f \in L^2(\T) \Big\},
\]
which will be equipped with the scalar product
\[
\langle f,g\rangle_{H^1_\Delta(\T)}:=\langle f,g\rangle_{H^1(\T)}+\langle \Delta_\cT f,\Delta_\cT g\rangle_{L^2(\T)}.
\]
\end{defn}

	Remark that Definition \ref{defn-laplacian} defines a weighted Laplace operator, i.e.\, it is a counterpart of an operator $\mu^{-1}\partial_x(\mu\partial_x.)$ on the real line, rather than $\partial_x^2$.
	This is explained in the following coordinate reformulation:
\begin{prop}\label{prop23}
For any $f=(f_{n,k})\in H^1(\T)$ and $h\in L^2(\cT)$ the following two con\-ditions are equivalent:
\begin{enumerate}
	\item[(a)] $\Delta_\cT f=h$,
	\item[(b)] $f''\in L^2(\T)$, and $f$ satisfies the Kirchhoff transmissition conditions
	\begin{equation} \label{kirchhoff}
		f'_{n,k}(L_{n,k}^-)\omega_{n,k} = \sum_{j=0}^{p-1} f'_{n+1,pk+j}(L_{n,k}^+)\omega_{n+1,pk+j}
	\end{equation}
	at each node $X_{n,k}$, and $h=f''$.
\end{enumerate}	
\end{prop}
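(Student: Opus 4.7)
The plan is to establish the equivalence by proving both implications via testing against functions in $\widetilde{H}^1_c(\T)$, which is a dense subset of $\widetilde{H}^1_0(\T)$ (noted right after the definition of the function spaces). Working with such compactly supported test functions reduces the infinite edge sum to a finite one and makes classical one-dimensional integration by parts immediately available on each edge.

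For the implication (b) $\Rightarrow$ (a), I take any $g=(g_{n,k})\in \widetilde{H}^1_c(\T)$. Since the weight on $e_{n,k}$ is the constant $\omega_{n,k}$, ordinary integration by parts yields
\[
\int_{e_{n,k}} f'_{n,k} g'_{n,k}\,\omega_{n,k}\,\dd t = \omega_{n,k}\bigl[f'_{n,k} g_{n,k}\bigr]_{L_{n,k}-\ell_{n,k}}^{L_{n,k}}-\int_{e_{n,k}} f''_{n,k} g_{n,k}\,\omega_{n,k}\,\dd t.
\]
Summing over the finitely many active edges, the bulk terms assemble into $-\int_\T f''g\,\dd\mu$. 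I regroup the boundary contributions vertex by vertex: at each interior vertex $X_{n,k}$, the parent-edge term $\omega_{n,k} f'_{n,k}(L_{n,k}^-)$ and the child-edge terms $-\sum_{j}\omega_{n+1,pk+j}f'_{n+1,pk+j}(L_{n,k}^+)$ share a common factor $g(X_{n,k})$ by continuity of $g$, and cancel thanks to \eqref{kirchhoff}; at the root the single remaining boundary term vanishes because $g(o)=0$. Hence $\int_\T f'g'\,\dd\mu = -\int_\T f''g\,\dd\mu$ for all $g\in\widetilde{H}^1_c(\T)$, which extends to $\widetilde{H}^1_0(\T)$ by density, and comparison with \eqref{eq-deltat} identifies $\Delta_\T f = f'' = h$.

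For (a) $\Rightarrow$ (b), I first probe inside individual edges. For $\varphi\in C_c^\infty$ supported strictly inside $(L_{n,k}-\ell_{n,k},L_{n,k})$, its zero extension $g\in\widetilde{H}^1_c(\T)$ inserted into \eqref{eq-deltat} (using $\Delta_\T f = h$) gives $-\int f'_{n,k}\varphi'\,\omega_{n,k}\,\dd t = \int h_{n,k}\varphi\,\omega_{n,k}\,\dd t$, so $f''_{n,k}=h_{n,k}$ in the distributional sense on each edge; thus $f''\in L^2(\T)$ and each $f'_{n,k}$ is $H^1$ on its edge, which legitimises the one-sided limits $f'_{n,k}(L_{n,k}^\pm)$. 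To extract Kirchhoff at a prescribed interior vertex $X_{n_0,k_0}$, I construct an explicit continuous piecewise-linear hat function $g\in\widetilde{H}^1_c(\T)$ equal to $1$ at $X_{n_0,k_0}$ and vanishing at every other vertex (in particular at $o$) as well as outside the $p+1$ edges incident to $X_{n_0,k_0}$. Integration by parts on these $p+1$ edges — now that $f''\in L^2$ is known on each of them — reduces the identity $\int_\T f'g'\,\dd\mu = -\int_\T h g\,\dd\mu = -\int_\T f''g\,\dd\mu$ provided by (a) to
\[
\omega_{n_0,k_0}f'_{n_0,k_0}(L_{n_0,k_0}^-) - \sum_{j=0}^{p-1}\omega_{n_0+1,pk_0+j}f'_{n_0+1,pk_0+j}(L_{n_0,k_0}^+) = 0,
\]
which is precisely \eqref{kirchhoff} at $X_{n_0,k_0}$.

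The main obstacle is purely bookkeeping: correctly matching parent and child boundary contributions at every vertex across an infinite tree. This is defused by the density of $\widetilde{H}^1_c(\T)$ in $\widetilde{H}^1_0(\T)$, which truncates all sums to finite ones, together with the freedom to localise test functions around any single vertex; no analytic subtleties beyond routine one-dimensional integration by parts on finite intervals arise.
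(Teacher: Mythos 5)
Your proof is correct and takes essentially the same route as the paper: test against compactly supported functions, integrate by parts edge by edge, first localize strictly inside edges to identify $f''=h$, then localize around a single vertex to extract the Kirchhoff condition (the paper uses arbitrary $g$ supported on the incident edges and invokes the arbitrariness of $g(X_{n,k})$, whereas you build an explicit hat function — a cosmetic difference). You supply more detail for (b) $\Rightarrow$ (a), which the paper dismisses as an immediate integration by parts, but the substance is identical.
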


\begin{proof}
Let $f\in H^1_\Delta(\cT)$. Let $g\in \Tilde H^1_c(\cT)$ with $\supp g\subset e_{n,k}$, then 
\begin{align*}
\big( \Delta_\T f, g \big)_{H^{-1}(\T),\widetilde{H}^{1}_0(\T)}=\int_{\cT}  (\Delta_\cT f)_{n,k}g_{n,k}\dd\mu&\equiv
\int_{L_{n,k}-\ell_{n,k}}^{L_{n,k}} (\Delta_\cT f)_{n,k}(t)g_{n,k}(t)\omega_{n,k}\,\dd t \\
&=  -\int_{L_{n,k}-\ell_{n,k}}^{L_{n,k}} f'(t)g'(t) \omega_{n,k}\,\dd t.
\end{align*}
Due to $g_{n,k}\in H^1_0(e_{n,k})$ this implies $(\Delta_\T f )|_{{n,k}} = f''_{n,k}$ and, hence, $\Delta_\cT f=f''$.
Now let $g\in \Tilde H^1_c(\cT)$ be only supported on the edges incident to $X_{n,k}$, then the integration by parts
yields
\begin{align*}
	\big( \Delta_\T f, &g \big)_{H^{-1}(\T),\widetilde{H}^{1}_0(\T)}\equiv
	\int_{L_{n,k}-\ell_{n,k}}^{L_{n,k}}f''_{n,k}(t)g_{n,k}(t)\omega_{n,k}\dd t\\
	&\quad+ \sum_{j=0}^{p-1} \int_{L_{n+1,pk+j}-\ell_{n+1,pk+j}}^{L_{n+1,pk+j}} f''_{n+1,pk+j}(t) g_{n+1,pk+j}(t)\omega_{n+1,pk+j}\dd t\\
	&=-\int_{L_{n,k}-\ell_{n,k}}^{L_{n,k}}f'_{n,k}(t)g'_{n,k}(t)\omega_{n,k}\dd t\\
	&\quad- \sum_{j=0}^{p-1} \int_{L_{n+1,pk+j}-\ell_{n+1,pk+j}}^{L_{n+1,pk+j}} f'_{n+1,pk+j}(t) g'_{n+1,pk+j}(t)\omega_{n+1,pk+j}\dd t\\
	&\quad +\Big(f'_{n,k}(L_{n,k}^-)\omega_{n,k} - \sum_{j=0}^{p-1} f'_{n+1,pk+j}(L_{n,k}^+)\omega_{n+1,pk+j}\Big)g(X_{n,k})\\
	&=-\int_\cT f' g'\dd\mu +\Big(f'_{n,k}(L_{n,k}^-)\omega_{n,k} - \sum_{j=0}^{p-1} f'_{n+1,pk+j}(L_{n,k}^+)\omega_{n+1,pk+j}\Big)g(X_{n,k}),
\end{align*}
and due to the arbitrariness of $g(X_{n,k})$ we arrive at the condition \eqref{kirchhoff}. This shows that (a) implies (b), and the reciprocate implication follows directly using the integration by parts.
\end{proof}

Using the scalar product $\langle\cdot,\cdot\rangle_{\widetilde{H}^1(\T)}$ from \eqref{skal1} 
we arrive at the orthogonal direct sum decomposition
\begin{equation}
	\label{h1decomp}
\widetilde{H}^1(\T) = \widetilde{H}^1_0(\T) \oplus \big( \Tilde H^1(\cT)\cap \ker\Delta_\cT\big).
\end{equation}
By Lemma \ref{lem:func_spaces}, this decomposition is non-trivial. 
Furthermore, one has the following result:
\begin{lem}\label{lem24}
	For any $h\in L^2(\cT)$ there is a unique $u\in \Tilde H^1_0(\cT)$ with $\Delta_\cT u=h$,
	and
	\[
	L^2(\cT)\ni h\mapsto u\in \Tilde H^1_0(\cT)
	\]
	is a bounded linear map.
\end{lem}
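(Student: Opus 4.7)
The plan is to rewrite the equation $\Delta_\T u = h$ as a Hilbert-space variational problem on $\widetilde{H}^1_0(\T)$ and to invoke the Riesz representation theorem. By Lemma~\ref{lemcomp}, the sesquilinear form
\[
a(u,g):=\langle u,g\rangle_{\widetilde H^1(\T)}=\int_\T u'\overline{g'}\dd\mu
\]
restricted to $\widetilde H^1_0(\T)\subset \widetilde H^1(\T)$ is a scalar product making the latter a Hilbert space (since $\widetilde H^1_0(\T)$ is closed in $\widetilde H^1(\T)$). So the first step is to record that $(\widetilde H^1_0(\T),a)$ is a Hilbert space.

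Next, I would introduce, for the given $h\in L^2(\T)$, the antilinear functional
\[
\Phi_h:\ \widetilde H^1_0(\T)\to\C,\qquad \Phi_h(g):=-\int_\T h\,\overline{g}\dd\mu.
\]
By Cauchy--Schwarz and the Poincaré inequality \eqref{poincare},
\[
|\Phi_h(g)|\le \|h\|_{L^2(\T)}\|g\|_{L^2(\T)}\le C_0\|h\|_{L^2(\T)}\,\|g'\|_{L^2(\T)}=C_0\|h\|_{L^2(\T)}\,\|g\|_{\widetilde H^1(\T)},
\]
so $\Phi_h$ is bounded with operator norm at most $C_0\|h\|_{L^2(\T)}$. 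The Riesz representation theorem then yields a unique $u\in\widetilde H^1_0(\T)$ with $\Phi_h(g)=a(u,g)$ for every $g\in\widetilde H^1_0(\T)$, and automatically $\|u\|_{\widetilde H^1(\T)}\le C_0\|h\|_{L^2(\T)}$, which gives both uniqueness and the boundedness of the map $h\mapsto u$ once the relation is translated back.

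The final step is to check that this $u$ indeed satisfies $\Delta_\T u=h$ in the sense of Definition~\ref{defn-laplacian}. For any $g\in\widetilde H^1_0(\T)$, replacing $g$ by $\overline g$ (which also lies in $\widetilde H^1_0(\T)$) in the Riesz identity $a(u,g)=\Phi_h(g)$ gives
\[
\int_\T u'g'\dd\mu=-\int_\T h\,g\dd\mu,
\]
that is, $(\Delta_\T u,g)_{H^{-1}(\T),\widetilde H^1_0(\T)}=\int_\T h\,g\dd\mu$, which is precisely the action of $h\in L^2(\T)\hookrightarrow H^{-1}(\T)$ on $g$. I do not anticipate a genuine obstacle here; the only mild subtlety worth being explicit about is the book-keeping between the sesquilinear scalar product $a$ and the bilinear distributional pairing defining $\Delta_\T$, handled by the substitution $g\mapsto\overline g$ above.
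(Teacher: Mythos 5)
Your proof is correct and follows essentially the same route as the paper's: both cast $\Delta_\T u=h$ as a variational problem on $\widetilde H^1_0(\T)$ equipped with the $\widetilde H^1$-scalar product from Lemma~\ref{lemcomp}, apply the Riesz representation theorem (with the Poincaré inequality providing boundedness of the source functional), and read off the norm estimate $\|u\|_{\widetilde H^1(\T)}\le C_0\|h\|_{L^2(\T)}$. You merely spell out more explicitly what the paper summarizes as "by definition," namely the passage between the sesquilinear $L^2$-pairing and the bilinear duality pairing used in Definition~\ref{defn-laplacian}, via the conjugation $g\mapsto\overline g$.
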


\begin{proof}
By definition, a function $u\in \Tilde H^1_0(\cT)$ satisfies $\Delta_\cT u=h$
if and only if
\begin{align}
\label{eq:hg}
\langle h,g\rangle_{L^2(\cT)}\equiv
\big(h, \overline{g} \big)_{H^{-1}(\T),\widetilde{H}^{1}_0(\T)} = - \langle u, g\rangle_{\widetilde{H}^1(\T)} \text{ for all } g\in \widetilde{H}^1_0(\T).
\end{align}
Due to Lemma \ref{lemcomp} the left-hand side defines a continuous anti-linear functional on $\Tilde H^1(\cT)$ with respect to $g$,
and the existence and the uniqueness of $u$ follow by the Riesz representation theorem.
Choosing in \eqref{eq:hg} $g=u$ yields the upper bound
\begin{align*}
	\|u\|_{\Tilde H^1(\cT)}^2=-\langle h, u\rangle_{L^2(\cT)}\leq \|h\|_{L^2(\cT)}\|u\|_{L^2(\cT)}\leq C_0\|h\|_{L^2(\cT)}\|u\|_{\Tilde H^1(\cT)},
\end{align*} 
where the constant $C_0$ is from  \eqref{poincare}. This implies the boundedness of the map $h\mapsto u$. 
\end{proof}

\subsection{Multi-scale boundary decomposition and trace map}\label{sec23}
This section is devoted to the introduction of the notion of a trace on functions from $\mathcal{T}$. As discussed in the introduction, we will define a trace operator with values in $L^2(\Gamma)$, and provide tools necessary to analyze its properties. 
It will be convenient to denote
	\[
	d:=m-1
	\]
	then $\Gamma$ is a compact $d$-dimensional Riemannian manifold (see the introduction).
	We will need a special decomposition of $\Gamma$. To define it, we start with its counterpart in the Euclidean case; for a Lebesgue-measurable set $U\subset \mathbb{R}^d$ we denote by $|U|$ its $d-$dimensional Lebesgue measure. 
	
	\begin{defn}[Multiscale decomposition, Euclidean case]
		Let $U\subset\R^d$ be a bounded open set. A \emph{regular strongly balanced $p$-multiscale decomposition} of $U$ is a collection
		\[
		(U_{n,k})_{n\in\N_0,\, k\in\{0,\dots,p^n-1\}}
		\]
		of non-empty subsets $U_{n,k}$ of $U$ such that
		\begin{enumerate}
			\item $U_{0,0}=U$:
			\item For any $n\in\N_0$ the sets $U_{n,0}, \dots, U_{n,p^{n}-1}$ are disjoint-
			\item For any $n\in \N_0$ and $k\in\{0,\dots,p^n-1\}$ one has
			\begin{gather*}
				U_{n+1,pk+j}\subset U_{n,k} \text{ for any $j\in\{0,\dots,p-1\}$,}\quad
				\Big| U_{n,k}\setminus \bigcup\limits_{j=0}^{p-1} U_{n+1,pk+j}\Big|=0.
			\end{gather*}
			\item $|U_{n,k}|=\dfrac{|U|}{p^n}$ for all $n\in \N_0$ and $k\in\{0,\dots,p^n-1\}$.
			\item  There is $c_1>0$ such that for all $n\in\N_0$ and $k\in\{0,\dots,p^n-1\}$ one has
			\[
			\diam U_{n,k} \leq c_1 p^{-\frac{n}{d}}.
			\]
			\item There is $c_2>0$ such that for all $h\in \R^d$, $n\in\N_0$, $k\in\{0,\dots,p^n-1\}$ one has
			\[
			\big|U_{n,k}\setminus(U_{n,k}+h)\big| \leq c_2 |h|p^{-\frac{n(d-1)}{d}}.
			\]
		\end{enumerate}
	\end{defn}

The above conditions can be viewed as an hierarchical decomposition procedure: One sets $U_{0,0}:=U$, and if for some $n$ all $U_{n,k}$ are already constructed, then
one decomposes each $U_{n,k}$ (up to zero measure sets) into $p$ disjoint pieces $U_{n+1,pk+j}$, $j\in\{0,\dots,p-1\}$. The last three conditions control, in a sense, the size and the shape of $U_{n,k}$.

By using a cover of $\Gamma$ by local charts, we can define a multiscale decomposition on $\Gamma$. Here, given a subset $\Gamma'\subset \Gamma$, we denote by $|\Gamma'|$ its hypersurface measure. 
\begin{defn}[Multiscale decomposition, manifold case]\label{defn2}
A collection
\[
(\Gamma_{n,k})_{n\in\N_0,\,k=0,\dots,p^n-1}
\]
of subsets $\Gamma_{n,k}\subset\Gamma$
is called a \emph{regular strongly balanced regular $p$-multiscale decomposition} of $\Gamma$,
if the following conditions hold:
	\begin{enumerate}
		\item $\Gamma_{0,0}=\Gamma$.
		\item For any $n\in\N_0$ the sets $\Gamma_{n,0}, \dots, \Gamma_{n,p^{n}-1}$ are mutually disjoint.
		\item For any $n\in \N_0$ and $k\in\{0,\dots,p^n-1\}$ one has
		\begin{gather*}
			\Gamma_{n+1,pk+j}\subset \Gamma_{n,k} \text{ for any $j\in\{0,\dots,p-1\}$,}\quad
			\Big| \Gamma_{n,k}\setminus \bigcup\limits_{j=0}^{p-1}\Gamma_{n+1,pk+j}\Big|=0.
		\end{gather*}
		\item There is $N_0\in\N_0$ such that for all $K_0\in\{0,\dots,p^{N_0}-1\}$ the following conditions are satisfied:
		\begin{enumerate}
		\item $|\Gamma_{N_0,K_0}|=p^{-N_0} |\Gamma|$.
		\item The closure $\overline{\Gamma_{N_0,K}}$ is covered by a local chart
		$\Phi_{N_0,K_0}$ on $\Gamma$ such that the sets
		\[
		\Tilde\Gamma_{N_0,K_0}:=\Phi_{N_0,K_0}^{-1}(\Gamma_{N_0,K_0})
		\]
		are bounded open sets with Lipschitz boundaries in $\R^d$.
		\item The sets
		\[
		\Tilde \Gamma^{N_0,K_0}_{n,k}:=\Tilde \Gamma_{N_0+n,p^n K_0+k}:=\Phi_{N_0,K_0}( \Gamma_{N_0+n,p^n K_0+k}), \ n\in\N_0, \ k\in\{0,\dots,p^n-1\},
		\]
		form a regular strongly balanced $p$-multiscale decomposition of $\Tilde\Gamma_{N_0,K_0}$.
		\end{enumerate}
	\end{enumerate}
\end{defn}

For the rest of the paper we pick a collection $(\Gamma_{n,k})$ as in Definition~\ref{defn2}. Note the existence of such collections is proved in \cite[Example 5.4]{Embedded Trace}.

Let us define the parameter
\begin{equation}
	\label{eq-sigma}
	\sigma \equiv \sigma(\cT):= \frac{1}{2}\left(1- \frac{\log(\ell)-\log(\omega)}{\log(p)}\right),
\end{equation}
which is strictly positive due to \eqref{eq-lpa}. For what follows we additionally assume
\begin{equation}
	\label{eq-sigma2}
	\sigma d < \frac{1}{2}.
\end{equation}	

The following theorem was the main result of the paper~\cite{Embedded Trace}:

\begin{thm}[Dirichlet trace map on $\cT$]\label{thm-trace} The linear map
	\begin{align*}
		\gamma_0^\T: H^1(\T) &\longrightarrow H^{\sigma d}(\Gamma), \\
		f &\mapsto \lim_{N \rightarrow \infty} \sum_{K=0}^{p^N-1}f(X_{N,K}) \one_{\Gamma_{N,K}},
	\end{align*}
	with the limit being taken in $H^{\sigma d}(\Gamma)$, is a well-defined bounded surjective linear operator,
	and
	\[
	\ker \gamma_0^\T = H^1_0(\T).
	\]
\end{thm}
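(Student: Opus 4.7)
The plan is to realize $\gamma_0^\T$ as the $H^{\sigma d}(\Gamma)$-limit of the level-$N$ sampling operators
\begin{equation*}
T_N f := \sum_{K=0}^{p^N - 1} f(X_{N,K}) \one_{\Gamma_{N,K}},
\end{equation*}
and to control the telescopic increments $T_{N+1} f - T_N f$ via a multiscale (Haar-type) description of $H^{\sigma d}(\Gamma)$ adapted to the decomposition $(\Gamma_{n,k})$. The first step, which is the technical heart of the argument, is to establish (by appealing to Besov-space theory on Ahlfors-regular metric measure spaces, in the spirit of \cite{Embedded Trace}) the equivalent norm
\begin{equation*}
\|g\|^2_{H^{\sigma d}(\Gamma)} \sim |\Gamma|\, \bigl|E_0 g\bigr|^2 + \sum_{N=0}^{\infty} p^{2\sigma (N+1)} \bigl\|E_{N+1} g - E_N g\bigr\|^2_{L^2(\Gamma)},
\end{equation*}
where $E_N g$ denotes the piecewise-constant conditional expectation of $g$ onto the $\sigma$-algebra generated by $\{\Gamma_{N,K}\}_K$. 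The balanced diameter and perimeter bounds in Definition~\ref{defn2}, combined with the constraint $\sigma d < \tfrac{1}{2}$, place us exactly in the regime where Haar-type atoms form an unconditional basis of $H^{\sigma d}(\Gamma)$.

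With this equivalence at hand, boundedness of $\gamma_0^\T$ reduces to the pointwise telescoping bound
\begin{equation*}
\bigl|f(X_{N+1,pK+j}) - f(X_{N,K})\bigr|^2 = \Bigl| \int_{e_{N+1,pK+j}} f'(s) \dd s \Bigr|^2 \le \frac{\ell_{N+1,pK+j}}{\omega_{N+1,pK+j}} \int_{e_{N+1,pK+j}} |f'|^2 \dd\mu,
\end{equation*}
a direct Cauchy--Schwarz calculation using the weighted $L^2$-norm on the edge. Plugging this into $\|T_{N+1}f - T_N f\|^2_{L^2(\Gamma)}$, using $|\Gamma_{N+1,K'}| \asymp p^{-(N+1)} |\Gamma|$ and $\ell_{n,k}/\omega_{n,k} \asymp (\ell/\omega)^n$, together with the crucial identity $p^{2\sigma}(\ell/\omega)p^{-1} = 1$ built into the definition of $\sigma$, one obtains the geometric cancellation
\begin{equation*}
\sum_{N=0}^{\infty} p^{2\sigma (N+1)} \|T_{N+1}f - T_N f\|^2_{L^2(\Gamma)} \lesssim \|f'\|^2_{L^2(\T)} \le \|f\|^2_{H^1(\T)}.
\end{equation*}
Hence $(T_N f)_N$ is Cauchy in $H^{\sigma d}(\Gamma)$, the limit $\gamma_0^\T f$ is well-defined, and $\gamma_0^\T$ is bounded.

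The inclusion $H_0^1(\T) \subset \ker \gamma_0^\T$ follows from the fact that $T_N f \equiv 0$ for all $N$ past the generation threshold of any $f \in H_c^1(\T)$, combined with continuity and density. The reverse inclusion is handled by a truncation argument: given $f$ with $\gamma_0^\T f = 0$, I would construct compactly supported approximants $f_N \in H_c^1(\T)$ by keeping $f$ on edges of generation strictly less than $N$ and modifying $f$ on each generation-$N$ edge so that it vanishes at the pendant vertex; the resulting $H^1$-error is governed precisely by the sampling contribution $\|T_N f\|_{H^{\sigma d}(\Gamma)}$, which tends to zero by assumption. For surjectivity, I would exhibit a bounded right inverse by setting $f(X_{N,K}) := \frac{1}{|\Gamma_{N,K}|} \int_{\Gamma_{N,K}} g \dd\sigma$ at each node and interpolating linearly along each edge; the same parameter identity, now running in reverse, yields $\|f\|_{H^1(\T)} \lesssim \|g\|_{H^{\sigma d}(\Gamma)}$, while the trace property $\gamma_0^\T f = g$ follows from Lebesgue differentiation on $\Gamma$. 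The only genuine obstacle is the multiscale norm equivalence for $H^{\sigma d}(\Gamma)$; once it is established, everything else reduces to matching the arithmetic of the tree parameters with the Sobolev index $\sigma d$.
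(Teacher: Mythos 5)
Your outline correctly identifies the ingredients the paper deploys: the multiscale equivalent norm on $H^{\sigma d}(\Gamma)$ (the paper's \eqref{eq:equiv}, imported from \cite{Embedded Trace}), the edgewise Cauchy--Schwarz bound $|f(X_{N+1,pK+j})-f(X_{N,K})|^2\le(\ell_{N+1,pK+j}/\omega_{N+1,pK+j})\int_{e_{N+1,pK+j}}|f'|^2\,\dd\mu$, the arithmetic identity $p^{2\sigma}=\omega p/\ell$, and the piecewise-linear lift for surjectivity, which the paper spells out in full as Lemma~\ref{lem-vg}. The paper itself does not reprove Theorem~\ref{thm-trace}; it cites it from \cite{Embedded Trace}, so what can be compared are these visible fragments, and on those your plan is consistent.

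The step that cannot simply be asserted is ``Hence $(T_Nf)_N$ is Cauchy in $H^{\sigma d}(\Gamma)$.'' The estimate $\sum_N p^{2\sigma(N+1)}\|T_{N+1}f-T_Nf\|^2_{L^2(\Gamma)}\lesssim\|f'\|^2_{L^2(\T)}$ would give Cauchyness in the norm \eqref{eq:equiv} immediately if the increments $a_n:=T_{n+1}f-T_nf$ were the martingale differences $P_ng-P_{n+1}g$ of the putative limit, since then $L^2$-orthogonality would give $\|T_Mf-T_Nf\|_*^2=\sum_{n=N}^{M-1}p^{2(n+1)\sigma}\|a_n\|_{L^2}^2$. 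But for generic $f\in H^1(\T)$, $(T_Nf)_N$ is not a martingale with respect to the projectors $P_n$: $P_nT_Nf$ is the $|\Gamma_{N,K}|$-weighted average of $\{f(X_{N,K})\}$ over $\Gamma_{n,k}$, which has no reason to coincide with $f(X_{n,k})$. So $a_n\in V_{n+1}$ but not in $V_{n+1}\ominus V_n$, orthogonality fails, and $\ell^2$-summability of $\{p^{(n+1)\sigma}\|a_n\|_{L^2}\}$ does not directly give $\ell^1$-summability of $\{\|a_n\|_*\}$, which is the obvious route to a Cauchy sequence. Closing this gap requires an explicit discrete Hardy/Young step: first get $L^2$-convergence $T_Nf\to g$, then write $p^{(m+1)\sigma}\|P_mg-P_{m+1}g\|_{L^2}\le\sum_{n\ge m}p^{(m-n)\sigma}\cdot p^{(n+1)\sigma}\|a_n\|_{L^2}$ and invoke $\ell^1\star\ell^2\hookrightarrow\ell^2$. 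The same subtlety recurs in your kernel argument: the $H^1(\T)$-cost of cutting $f$ at generation $N$ is controlled by $p^{N\sigma}\|T_Nf\|_{L^2(\Gamma)}$, which a priori dominates $\|T_Nf\|_{H^{\sigma d}(\Gamma)}$ for sequences concentrated at low scales, so one again needs the tail estimate $p^{N\sigma}\|T_Nf\|_{L^2}\lesssim(\sum_{n\ge N}p^{2(n+1)\sigma}\|a_n\|_{L^2}^2)^{1/2}$ rather than the convergence $\|T_Nf\|_{H^{\sigma d}}\to0$ alone. Finally, in the surjectivity step the identification $\gamma_0^\T v=g$ should rest on the $H^{\sigma d}(\Gamma)$-norm convergence $P_ng\to g$ (the paper's \eqref{pnf}), not on Lebesgue differentiation, which only yields pointwise a.e.~convergence.
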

As a corollary one easily obtains
\begin{equation}
	\label{eq-hhh}
	\Tilde H^1(\cT)\cap \ker \gamma_0^\T =\Tilde H^1_0(\cT).
\end{equation}

For what follows it will be useful to have an explicit right inverse of $\gamma^\cT_0$ and to revise some constructions related to fractional Sobolev spaces. To do so, let us introduce some preliminary notation. First of all, given $N\in\N_0$ denote 
\begin{equation}
\begin{aligned}
	\label{pvn}
V_N(\Gamma)&:=\mathop{\mathrm{span}}\big\{\one_{\Gamma_{N,K}}:\ K\in\{0,\dots,p^N-1\}\big\},\\
P_N&:=\,\text{the orthogonal projector }L^2(\Gamma)\to V_N(\Gamma).
\end{aligned}
\end{equation}
As $V_N(\Gamma)\subset V_{N+1}(\Gamma)$ for any $N$, it follows that
\begin{align}
\label{eq:commutativity}
P_N P_n=P_{\min\{N,n\}} \text{ for any }N,n\in\N_0,\text{ which implies that }P_NP_n=P_nP_N.
\end{align}
Recall that for any $r\in(0,\frac{1}{2})$ one has the equality
\[
H^r(\Gamma) = \Big\{ f\in L^2(\Gamma):\  (p^{\frac{nr}{d}}\|f-P_nf\|_{L^2(\Gamma)}) \in \ell^2  \Big\}
\]
with an equivalent norm given by
\begin{equation}
	\label{norm-ar}
\|f\|^2_{A^r(\Gamma)}:=\|P_0 f\|^2_{L^2(\Gamma)}+\sum_{n=0}^\infty p^{\frac{2nr}{d}}\|f-P_n f\|^2_{L^2(\Gamma)},
\end{equation}
see \cite[Sec.~3]{Embedded Trace}. 
In particular, for any $N\to\infty$ one has
$P_N \big( H^r(\Gamma)\big)\subset H^r(\Gamma)$, and
we also recall that
\begin{equation}
	\label{l2conv}
\|f-P_N f\|_{L^2(\Gamma)}\xrightarrow{N\to\infty}0 \text{ for any } f\in L^2(\Gamma).
\end{equation}
Let us now argue that the above holds true with $L^2(\Gamma)$ replaced by $H^r(\Gamma)$. Indeed, provided $f\in H^r(\Gamma)$, by \eqref{norm-ar} it holds that 
\[
\|f-P_N f\|^2_{A^r(\Gamma)}=
\|P_0 (f-P_N f)\|^2_{L^2(\Gamma)}+\sum_{n=0}^\infty p^{\frac{2nr}{d}}\big\|(I-P_n)(f-P_Nf)\big\|^2_{L^2(\Gamma)}.
\]
Remark that each summand on the right-hand side converges to $0$ for $N\to\infty$, as the operators $P_0$ and $I-P_n$ are bounded and \eqref{l2conv} holds. Moreover, for each $n$ one has, with \eqref{eq:commutativity}, 
\[
\big\|(I-P_n)(I-P_N)f \big\|^2_{L^2(\Gamma)}=\big\|(I-P_N)(I-P_n)f \big\|^2_{L^2(\Gamma)}\le \|f-P_n f\|^2_{L^2(\Gamma)}.
\]
Hence, the dominated convergence theorem shows
\begin{equation}
	\label{pnf}
	P_N f\xrightarrow{N\to\infty}f \text{ in $H^r(\Gamma)$ for any }f\in H^r(\Gamma).
\end{equation}
Now we have all the necessary prerequisites to state the following result, which constructs an explicit $\Tilde H^1(\cT)$ lifting of an element of  $H^{\sigma d}(\Gamma)$. 
\begin{lem}\label{lem-vg}
Let $g\in H^{\sigma d}(\Gamma)$ and denote
	\[
	g_{n,k}:=\dfrac{1}{|\Gamma_{n,k}|}\int_{\Gamma_{n,k}}g\,\dd x.
	\]
Let $v:\cT\to\C$ be linear	on each edge $e_{n,k}$ with
\[
v(o)=0,\quad v(X_{n,k})=g_{n,k} \text{ for all }(n,k),
\]
then $v\in \Tilde H^1(\cT)$ with $\gamma^\cT_0 v=g$. Moreover, the linear mapping $H^{\sigma d}(\Gamma)\ni g\mapsto v\in \widetilde{H}^1(\mathcal{T})$ is bounded.  
\end{lem}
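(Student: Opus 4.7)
The plan is to verify, in order, that (i) $v$ is a well-defined element of $\Tilde H^1(\T)$, (ii) the $\Tilde H^1(\T)$-norm of $v$ is controlled by the $H^{\sigma d}(\Gamma)$-norm of $g$, and (iii) $\gamma_0^\T v = g$. The construction is purely explicit, so most of the work is pushing the bookkeeping through and matching the exponent coming from the edge scalings with the exponent $\sigma d$ appearing in the norm \eqref{norm-ar}.

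For (i), continuity of $v$ is automatic: on each edge $e_{n,k}$ the function is affine, and at every interior vertex $X_{n,k}$ the $p+1$ edges meeting there all take the common value $g_{n,k}$, while at the root the value $0$ assigned to $v(o)$ matches the limit of $v$ along $e_{0,0}$. This last identity gives $v\in \Tilde H^1(\T)$ as soon as $v'\in L^2(\T)$. To estimate $v'$, note that on $e_{n,k}$ (with $n\ge 1$)
\[
v'|_{e_{n,k}}=\frac{g_{n,k}-g_{n-1,\lfloor k/p\rfloor}}{\ell_{n,k}},\qquad \int_{e_{n,k}}|v'|^2\dd\mu=\frac{\omega_{n,k}}{\ell_{n,k}}\,|g_{n,k}-g_{n-1,\lfloor k/p\rfloor}|^2,
\]
and the edge $e_{0,0}$ contributes a term of size $|g_{0,0}|^2\lesssim \|P_0 g\|^2_{L^2(\Gamma)}$. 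Using \eqref{eq-ccc} and $|\Gamma_{n,k}|\asymp p^{-n}$ one gets
\[
\sum_{k=0}^{p^n-1}|g_{n,k}-g_{n-1,\lfloor k/p\rfloor}|^2\asymp p^n\,\|P_n g-P_{n-1}g\|^2_{L^2(\Gamma)},
\]
where the right-hand side arises from the fact that $P_n g=\sum_k g_{n,k}\one_{\Gamma_{n,k}}$ and that $\Gamma_{n,k}\subset\Gamma_{n-1,\lfloor k/p\rfloor}$.

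The key exponent identification is now the following: from \eqref{eq-sigma} one reads off $2\sigma\log p=\log p+\log(\omega/\ell)=\log(\omega p/\ell)$, i.e.
\[
\Bigl(\frac{\omega p}{\ell}\Bigr)^n=p^{2n\sigma}.
\]
Combining with the previous estimate and the orthogonal telescoping $\|P_n g-P_{n-1}g\|_{L^2(\Gamma)}^2\le \|g-P_{n-1}g\|_{L^2(\Gamma)}^2$, I obtain
\[
\|v'\|_{L^2(\T)}^2\lesssim \|P_0 g\|_{L^2(\Gamma)}^2+\sum_{n\ge 1}p^{2n\sigma}\|g-P_{n-1}g\|_{L^2(\Gamma)}^2\lesssim \|g\|_{H^{\sigma d}(\Gamma)}^2,
\]
the last step by the norm equivalence \eqref{norm-ar} with $r=\sigma d$ (which is admissible since $\sigma d<\half$ by \eqref{eq-sigma2}). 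The $L^2(\T)$-bound on $v$ itself follows either from the Poincar\'e inequality \eqref{poincare} in Lemma \ref{lemcomp}, since $v(o)=0$, or by a direct estimate using $\|v\|_{L^2(e_{n,k})}^2\lesssim \omega^n\ell^n(|g_{n,k}|^2+|g_{n-1,\lfloor k/p\rfloor}|^2)$ and $\ell\omega p<1$ (which is a consequence of \eqref{eq-lpa}).

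Finally, for (iii), directly by Theorem \ref{thm-trace} and the very choice of $v(X_{N,K})=g_{N,K}$,
\[
\gamma_0^\T v=\lim_{N\to\infty}\sum_{K=0}^{p^N-1}g_{N,K}\one_{\Gamma_{N,K}}=\lim_{N\to\infty}P_N g,
\]
with convergence in $H^{\sigma d}(\Gamma)$, and by \eqref{pnf} this limit is exactly $g$. The map $g\mapsto v$ is linear by construction, and the bounds established in (i)–(ii) provide its continuity $H^{\sigma d}(\Gamma)\to\Tilde H^1(\T)$. The only genuinely non-routine point is the exponent matching $(\omega p/\ell)^n=p^{2n\sigma}$, which is precisely what makes \eqref{eq-sigma} the right definition; once this is in place, the rest reduces to assembling the multiscale characterization of $H^{\sigma d}(\Gamma)$ from \cite{Embedded Trace}.
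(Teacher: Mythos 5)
Your proof is correct and follows essentially the same approach as the paper: compute $v'$ edge-by-edge, convert the resulting $\ell^2$ sums into multiscale $L^2(\Gamma)$ norms using $|\Gamma_{n,k}|\gtrsim p^{-n}$, match the edge scaling $(\omega p/\ell)^n$ with $p^{2n\sigma}$, and identify the trace with $\lim_N P_N g$ via \eqref{pnf}. The only caveat is the casual remark that the $L^2(\T)$-bound on $v$ "follows from the Poincar\'e inequality since $v(o)=0$": as stated this is circular, because \eqref{poincare} is an inequality on $\widetilde H^1(\T)$, and membership of $v$ in $\widetilde H^1(\T)$ already presupposes $v\in L^2(\T)$ (one would need a truncate-and-Fatou argument to make this rigorous). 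The direct estimate you offer as an alternative, using $\ell_{n,k}\omega_{n,k}\lesssim(\ell\omega)^n$ together with $\ell\omega p<1$ and $\|P_ng\|_{L^2(\Gamma)}\le\|g\|_{L^2(\Gamma)}$, is exactly the paper's Step 2 and is the clean way to close this point.
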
	
\begin{proof}
To prove the above lemma, we need to verify that $v$ as defined in the statement of the lemma belongs to the space $\widetilde{H}^1(\mathcal{T})$, and that its trace is given by $g$.

Step 1: Estimating  $\|v'\|_{L^2(\T)}$. We have
\begin{equation}
	\label{vvv}
	\begin{aligned}
		v_{0,0}(t)&=g_{0,0}\dfrac{t}{\ell_{0,0}}, \quad v'_{0,0}(\cdot)=\dfrac{g_{0,0}}{\ell_{0,0}},\\
		v_{n+1,pk+j}(t)&=g_{n,k}+(g_{n+1,pk+j}-g_{n,k})\dfrac{t-L_{n,k}}{\ell_{n+1,pk+j}},\\
		v'_{n+1,pk+j}(\cdot)&=\dfrac{g_{n+1,pk+j}-g_{n,k}}{\ell_{n+1,pk+j}},
	\end{aligned}
\end{equation}
hence,
\begin{align*}
	\|v'\|^2_{L^2(\cT)}&=\dfrac{\omega_{0,0}}{\ell_{0,0}}|g_{0,0}|^2+\sum_{n=0}^\infty \sum_{k=0}^{p^n-1}\sum_{j=0}^{p-1}\dfrac{\omega_{n+1,pk+j}}{\ell_{n+1,pk+j}}|g_{n+1,pk+j}-g_{n,k}|^2.
\end{align*}
Using the assumption \eqref{eq-ccc} we have
\[
\dfrac{\omega_{n,k}}{\ell_{n,k}}\le C^2 \Big(\dfrac{\omega}{\ell}\Big)^n \text{ for all }(n,k),
\]
which results in
\begin{align}
	\label{eq:boundV}
\|v'\|^2_{L^2(\cT)}\le C^2\bigg[|g_{0,0}|^2+\sum_{n=0}^\infty \sum_{k=0}^{p^n-1}\sum_{j=0}^{p-1}\Big(\dfrac{\omega}{\ell}\Big)^{n+1}|g_{n+1,pk+j}-g_{n,k}|^2\bigg].
\end{align}
To estimate the left-hand side via $g$, we use an equivalent norm $\|\cdot\|_*$ in $H^{\sigma d}(\Gamma)$:
\begin{align}
	\label{eq:equiv}
\|h\|^2_*:=\|P_0 h\|^2_{L^2(\Gamma)}+\sum_{n=0}^\infty p^{2 (n+1)\sigma}\|P_n g-P_{n+1} g\|^2_{L^2(\Gamma)},
\end{align}
see \cite[Sec.~3.2 and 3.4]{Embedded Trace}. Also, due to the choice of $\sigma$ in \eqref{eq-sigma} we have
\[
p^{2\sigma}=\dfrac{\omega p}{\ell}.
\]
For the function $g$ we have, using properties of the multiscale decomposition of $\Gamma$: 
\begin{align}
	\nonumber
	P_n g&=\sum_{k=0}^{p^n-1} g_{n,k}\one_{\Gamma_{n,k}},\\
	\label{eq:P0g}
	\|P_n g\|^2_{L^2(\Gamma)}&=\sum_{k=0}^{p^n-1} |g_{n,k}|^2 |\Gamma_{n,k}|,\\
	\nonumber
	P_n g-P_{n+1}g&=\sum_{k=0}^{p^n-1} g_{n,k}\one_{\Gamma_{n,k}} - \sum_{k=0}^{p^n-1}
	\sum_{j=0}^{p-1} g_{n+1,pk+j}\one_{\Gamma_{n+1,pk+1}},\\
	\nonumber
	&=\sum_{k=0}^{p^n-1}\sum_{j=0}^{p-1} (g_{n,k}-g_{n+1,pk+j})\one_{\Gamma_{n+1,pk+j}},\\
	\label{eq:Png}
	\|P_n g-P_{n+1}g\|^2_{L^2(\Gamma)}&=\sum_{k=0}^{p^n-1}\sum_{j=0}^{p-1}
	|g_{n,k}-g_{n+1,pk+j}|^2|\Gamma_{n+1,pk+1}|.
\end{align}
Due to the assumptions on $(\Gamma_{n,k})$ we can  find a constant $c_0>0$ such that $|\Gamma_{n,k}|\ge c_0p^{-n}$ for all $(n,k)$. 
From \eqref{eq:P0g} and \eqref{eq:Png} it follows that
\begin{align}
	\label{eq:firstg}
 \sum\limits_{k=0}^{p^n-1}|g_{n,k}|^2&\leq c_0^{-1}p^{n}\|P_ng\|_{L^2(\Gamma)}^2,\\
	\label{eq:secondg}
\sum_{k=0}^{p^n-1}\sum_{j=0}^{p-1}
	|g_{n,k}-g_{n+1,pk+j}|^2&\leq c_0^{-1}p^{n+1}\|P_n g-P_{n+1}g\|^2_{L^2(\Gamma)}.
\end{align}
Plugging in the above two bounds into \eqref{eq:boundV} yields 
\begin{align*}
	\|v'\|^2_{L^2(\mathcal{T})}\leq C^2c_0^{-1}\left(\|P_0g\|^2+\sum\limits_{n=0}^{\infty}\left(\frac{\omega p}{\ell}\right)^{n+1}\|P_{n}g-P_{n+1}g\|^2_{L^2(\Gamma)}\right)\leq C^2 c_0^{-1}\|g\|_*^2,
\end{align*}
by definition \eqref{eq:equiv}. 

Step 2: Estimating  $\|v\|_{L^2(\T)}$. From the explicit expressions \eqref{vvv} for $v$ we see that 
\begin{align*}
	\|v_{0,0}\|^2_{L^2(0,\ell_{0,0})}&\le |g_{0,0}|^2\ell_{0,0},\\
	\|v_{n+1,pk+j}\|^2_{L^2(L_{n,k},L_{n+1,pk+j})}&\le 2\big(|g_{n,k}|^2+|g_{n+1,pk+j}-g_{n,k}|^2\big)\ell_{n+1,pk+j}.
\end{align*}
Hence 
\begin{multline*}
	\|v\|^2_{L^2(\T)}\le |g_{0,0}|^2\ell_{0,0}\omega_{0,0}\\
	+2\sum_{n=0}^\infty\sum_{k=0}^{p^n-1}\sum_{j=0}^{p-1}\big(|g_{n,k}|^2+|g_{n+1,pk+j}-g_{n,k}|^2\big)\ell_{n+1,pk+j}\omega_{n+1,pk+j}.
\end{multline*}
With the help of \eqref{eq-lpa} and \eqref{eq-ccc} we estimate $\ell_{n,k}\omega_{n,k}\le C^2 (\ell\omega)^{n}$ for all $(n,k)$, and the above further rewrites 
\begin{multline*}
	\|v\|^2_{L^2(\T)} 
	\le C^2\big( |g_{0,0}|^2+2p\sum_{n=0}^\infty\sum_{k=0}^{p^n-1}(\ell\omega)^{n+1}|g_{n,k}|^2\\
	+2\sum_{n=0}^\infty(\ell\omega)^{n+1}\sum_{k=0}^{p^n-1}\sum_{j=0}^{p-1}|g_{n+1,pk+j}-g_{n,k}|^2\big).
\end{multline*}
We combine \eqref{eq:firstg} and \eqref{eq:secondg} to obtain 
\begin{multline*}
\|v\|^2_{L^2(\T)} 
	\le C^2c_0^{-1}\big( \|P_0g\|^2+2\sum_{n=0}^\infty (\ell\omega p)^{n+1}\|P_ng\|^2_{L^2(\Gamma)}\\+2\sum_{n=0}^\infty(\ell\omega p)^{n+1}\|P_ng-P_{n+1}g\|^2_{L^2(\Gamma)}\big).
\end{multline*}
The assumption \eqref{eq-lpa} $\ell \omega p <1$ and the inequality $\|P_n g\|_{L^2(\Gamma)}\leq \|g\|_{L^2(\Gamma)}$ allow to bound the first sum in the right-hand side by $\|g\|_{L^2(\Gamma)}$. As for the second sum, since $\ell\omega p<p^{2\sigma}$, we bound it by the norm \eqref{eq:equiv}. This yields the desired bound:
\begin{align*}
	\|v\|^2_{L^2(\T)}\leq C^2c_0^{-1}(2\|g\|_{L^2(\Gamma)}^2+2\|g\|_{*}^2).
\end{align*}

The results of the steps 1 and 2 show that $\|v\|_{H^1(\T)}\leq \widetilde{C}\|g\|_{H^{\sigma d}(\Gamma)}$ with some  $\widetilde{C}>0$ independent of $g$. 

Step 3: Computing the trace of $v$. 
By construction for each $n\in\N$ it holds that
\[
\sum_{k=0}^{p^n-1} v(X_{n,k})\one_{\Gamma_{n,k}}=\sum_{k=0}^{p^n-1} g_{n,k}\one_{\Gamma_{n,k}}=P_n g.
\]
For $n\to\infty$ the left-hand side converges in $H^{\sigma d}(\Gamma)$ to $\gamma^\cT_0 v$ (see Theorem \ref{thm-trace}), while the right-hand side converges to $g$ as shown in \eqref{pnf}, which gives the sought property $\gamma^\cT_0 v=g$.
\end{proof}

\subsection{Normal derivative and Dirichlet-to-Neumann map}\label{sec24}

We will start with a simple boundary value problem for the Laplacian $\Delta_\cT$.
\begin{lem} \label{DPLEwell}
For any $g\in H^{\sigma d}(\Gamma)$ there is a unique solution $u=u_g$
of the Dirichlet problem
\begin{equation}
	 \label{eq-dirichlet}
\begin{aligned}
	\Delta_\T u &= 0, &
	\gamma^\T_0 u &= g, & u\in \Tilde H^1(\cT),
\end{aligned}
\end{equation}
and the map
\begin{equation}
	\label{eq-pt}
P_\cT: H^{\sigma d}(\Gamma)\ni g\mapsto u_g\in \Tilde H^1(\cT)
\end{equation}
is a bounded linear operator with $\Delta_\cT P_\cT=0$ and $\gamma^\cT_0 P_\cT=\Id$.
\end{lem}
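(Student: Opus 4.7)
The plan is to combine the explicit right inverse of the trace map from Lemma \ref{lem-vg} with the orthogonal direct sum decomposition \eqref{h1decomp}. First, given $g\in H^{\sigma d}(\Gamma)$, I invoke Lemma \ref{lem-vg} to produce a lifting $v\in\widetilde{H}^1(\T)$ with $\gamma_0^\T v = g$ and with norm controlled by $\|g\|_{H^{\sigma d}(\Gamma)}$. Then I decompose $v = v_0 + u$ according to \eqref{h1decomp}, where $v_0\in\widetilde{H}^1_0(\T)$ and $u\in\widetilde{H}^1(\T)\cap\ker\Delta_\T$. Since $\widetilde{H}^1_0(\T) = \widetilde{H}^1(\T)\cap\ker\gamma_0^\T$ by \eqref{eq-hhh}, we get $\gamma_0^\T u = \gamma_0^\T v - \gamma_0^\T v_0 = g - 0 = g$, so $u$ solves \eqref{eq-dirichlet}.

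For uniqueness, suppose $u_1,u_2\in\widetilde{H}^1(\T)$ both solve \eqref{eq-dirichlet}. Then $w:=u_1-u_2$ satisfies $\gamma_0^\T w = 0$, so $w\in\widetilde{H}^1_0(\T)$ by \eqref{eq-hhh}, and $\Delta_\T w = 0$ in $H^{-1}(\T)$. Using $w$ itself as a test function in the definition \eqref{eq-deltat} of $\Delta_\T w$ gives $\|w'\|_{L^2(\T)}^2 = 0$, and the Poincar\'e-type inequality \eqref{poincare} forces $w=0$.

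Boundedness of $P_\T$ is then immediate: $u$ is the orthogonal projection of $v$ onto $\widetilde{H}^1(\T)\cap\ker\Delta_\T$ with respect to the inner product \eqref{skal1}, so $\|u\|_{\widetilde{H}^1(\T)}\le\|v\|_{\widetilde{H}^1(\T)}$. Composing with the bounded lifting from Lemma \ref{lem-vg} and invoking the equivalence of the $\widetilde{H}^1$-norm with the $H^1$-norm on $\widetilde{H}^1(\T)$ (Lemma \ref{lemcomp}) yields $\|P_\T g\|_{\widetilde{H}^1(\T)}\lesssim \|g\|_{H^{\sigma d}(\Gamma)}$. Linearity follows from the uniqueness of the decomposition. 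The identities $\Delta_\T P_\T = 0$ and $\gamma_0^\T P_\T = \mathrm{Id}$ are built into the construction.

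I do not expect any serious obstacle here: every ingredient is already available in the preceding subsections. The only point that requires a moment of care is distinguishing the two roles played by the space $\widetilde{H}^1_0(\T)$ — as the kernel of $\gamma_0^\T$ on $\widetilde{H}^1(\T)$ (through \eqref{eq-hhh}) and as the orthogonal complement of $\widetilde{H}^1(\T)\cap\ker\Delta_\T$ (through \eqref{h1decomp}) — since it is precisely the matching of these two characterizations that makes the splitting of the lifting produce a solution whose trace is exactly $g$.
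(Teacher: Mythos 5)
Your proof is correct and uses the same ingredients as the paper: the orthogonal decomposition \eqref{h1decomp}, the identity \eqref{eq-hhh}, and the $\widetilde H^1(\T)$-lifting of Lemma \ref{lem-vg}. The paper's proof is more compressed — it observes that $\gamma_0^\T$ restricted to $\widetilde H^1(\T)\cap\ker\Delta_\T$ is bounded and bijective and appeals to the closed graph theorem to conclude the inverse is bounded — whereas you unwind the bijectivity into an explicit construction (project the lifting onto the harmonic summand) and obtain the norm bound directly from the boundedness of the lifting and the orthogonality of the projection; both are equally valid, and yours has the mild advantage of producing an explicit bound on $\|P_\T\|$.
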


\begin{proof}
In view of the decomposition \eqref{h1decomp} and the identity \eqref{eq-hhh} the map
\[
\gamma_0^\cT:\ \Tilde H^1(\cT)\cap \ker\Delta_\cT\to H^{\sigma d}(\Gamma)
\]
is a bounded bijective linear operator, hence, an isomorpshism by the closed graph theorem,
and its inverse is exactly the map $P_\cT$.
\end{proof}

\begin{defn}
For $u\in H^1_{\Delta}(\T)$, its normal derivative $\gamma^\cT_1 u\in H^{-\sigma d}(\Gamma)$ on $\Gamma$
is defined by the duality product:
	\begin{equation}
		\label{deriv}
	\big(\gamma^{\T}_1 u, \gamma^\cT_0 v\big)_{H^{-\sigma d}(\Gamma), H^{\sigma d}(\Gamma)}:=\int_\cT (\Delta_\T u) v\,\dd\mu+\int_\cT u'v'\,\dd\mu \text{ for all } v \in \Tilde H^1(\cT).
	\end{equation} 
\end{defn}

Remark that with the above choice of the duality product, for $g\in H^{-\sigma d}(\Gamma)$, $H^{\sigma d}(\Gamma)\ni\varphi\mapsto (g, \varphi)_{H^{-\sigma d}(\Gamma), H^{\sigma d}(\Gamma)}$ is a linear, rather than antilinear, form.

\begin{lem}\label{lem18}
	The map
	\[
	H^1_\Delta(\cT)\ni u\mapsto \gamma^\cT_1 u \in H^{-\sigma d}(\Gamma)
	\]
	is a well-defined bounded linear operator.
\end{lem}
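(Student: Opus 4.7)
The plan is to define $\gamma^\cT_1 u$ as a linear functional on $H^{\sigma d}(\Gamma)$ by picking, for each $g \in H^{\sigma d}(\Gamma)$, any lift $v \in \widetilde{H}^1(\cT)$ with $\gamma^\cT_0 v = g$ and setting the pairing equal to the right-hand side of~\eqref{deriv}. Three items must then be verified: that the value is independent of the choice of $v$, that the resulting functional is bounded in $g$, and that the $H^{-\sigma d}(\Gamma)$-norm of $\gamma^\cT_1 u$ depends continuously on $u$.

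For well-definedness, any two lifts $v_1, v_2$ of the same $g$ differ by $w = v_1 - v_2$, which lies in $\widetilde{H}^1(\cT) \cap \ker \gamma^\cT_0 = \widetilde{H}^1_0(\cT)$ by identity~\eqref{eq-hhh}. By Definition~\ref{defn-laplacian}, $(\Delta_\cT u, w)_{H^{-1}(\cT), \widetilde{H}^1_0(\cT)} = -\int_\cT u' w' \, \dd\mu$, and since $u \in H^1_\Delta(\cT)$ so that $\Delta_\cT u \in L^2(\cT)$, the left-hand side equals $\int_\cT (\Delta_\cT u)\, w \, \dd\mu$. Hence the two integrals on the right of~\eqref{deriv} cancel when $v$ is replaced by $w$, so the pairing genuinely depends only on $g$, not on the particular lift.

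For the boundedness of the functional, I invoke Lemma~\ref{lem-vg} (alternatively Lemma~\ref{DPLEwell}), which provides a bounded linear right inverse $E: H^{\sigma d}(\Gamma) \to \widetilde{H}^1(\cT)$ of $\gamma^\cT_0$. Choosing $v = E g$ and applying Cauchy--Schwarz to both integrals yields
\begin{equation*}
\bigl|(\gamma^\cT_1 u, g)_{H^{-\sigma d}(\Gamma), H^{\sigma d}(\Gamma)}\bigr|
\le \|\Delta_\cT u\|_{L^2(\cT)} \|Eg\|_{L^2(\cT)} + \|u'\|_{L^2(\cT)} \|(Eg)'\|_{L^2(\cT)},
\end{equation*}
which is bounded by $C \|u\|_{H^1_\Delta(\cT)} \|g\|_{H^{\sigma d}(\Gamma)}$ by the continuity of $E$. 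Linearity in $u$ is immediate from the bilinear form of both integrals on the right of~\eqref{deriv}.

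I do not expect a genuine obstacle here; the only point that deserves care is the identification, when $\Delta_\cT u \in L^2(\cT)$, of the $H^{-1}(\cT)$--$\widetilde{H}^1_0(\cT)$ duality pairing with the $L^2$ integral, which underlies the cancellation argument in the well-definedness step and is a routine consequence of the definition of $H^1_\Delta(\cT)$ together with density of $\widetilde{H}^1_c(\cT)$ in $\widetilde{H}^1_0(\cT)$.
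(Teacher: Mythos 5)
Your proposal is correct and matches the paper's argument essentially step for step: well-definedness is obtained by observing that a difference of lifts lies in $\widetilde H^1_0(\cT)$ (via \eqref{eq-hhh}) and there the right-hand side of \eqref{deriv} vanishes by Definition~\ref{defn-laplacian}, and boundedness follows by plugging a bounded right inverse of $\gamma^\cT_0$ (the paper uses $P_\cT$ from Lemma~\ref{DPLEwell}, one of the two options you mention) into the pairing and applying Cauchy--Schwarz. The remark at the end about identifying the $H^{-1}$--$\widetilde H^1_0$ pairing with the $L^2$ integral when $\Delta_\cT u\in L^2(\cT)$ is a sound observation, though it is already baked into the paper's conventions (cf. the proof of Lemma~\ref{lem24}), so no extra density argument is really needed.
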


\begin{proof}
	For $\gamma_0^\cT v=0$ in \eqref{deriv} one has $v\in\Tilde H^1_0(\cT)$, and the right-hand side is zero due to the definition of $\Delta_\cT$, so $\gamma^\cT_1 u$ is well-defined, and its linearity is clear.	It remains to show the continuity properties of $\gamma_1^{\T}$. 
	 With the help of the map $P_\cT$ from \eqref{eq-pt}, for any $g\in H^{\sigma d}(\cT)$ one has
	\begin{align*}
	\Big| \big(\gamma^{\T}_1 u, g\big)_{H^{-\sigma d}(\Gamma), H^{\sigma d}(\Gamma)}\Big|&\equiv 
	\Big|\big(\gamma^{\T}_1 u, \gamma^\cT_0 P_\cT g\big)_{H^{-\sigma d}(\Gamma), H^{\sigma d}(\Gamma)}\Big|\\
	&=\Big| \int_\cT (\Delta_\T u) P_\cT g\,\dd\mu+\int_\cT u' (P_\cT g)'\,\dd\mu\Big|\\
	&\le \|\Delta_\cT u\|_{L^2(\cT)}\|P_\cT g\|_{L^2(\cT)}+\|u'\|_{L^2(\cT)}\|(P_\cT g)'\|_{L^2(\cT)}\\
	&\le 2 \|u\|_{H^1_\Delta(\cT)} \|P_\cT g\|_{H^1(\cT)}\\
	&\le 2 \|u\|_{H^1_\Delta(\cT)} \|P_\cT\|_{H^{\sigma d}(\Gamma)\to H^1(\cT)} \|g\|_{H^{\sigma d}(\Gamma)},
	\end{align*}
	and by taking the supremum over all $g$ with $\|g\|_{H^{\sigma d}(\Gamma)}\le 1$ we arrive at the conclusion.
\end{proof}

\begin{defn}
The Dirichlet-to-Neumann operator $\D$ for $\cT$ is defined by
	\[
	\D:=\gamma^\cT_1 P_\cT:\ H^{\sigma d}(\Gamma)\to H^{-\sigma d}(\Gamma).
	\]
By Lemmas \ref{DPLEwell} and \ref{lem18} it is a bounded linear operator.
\end{defn}

For the following we make the trivial observation that the maps $\gamma^\cT_0$, $\gamma^\cT_1$, $\Delta_\cT$ and $P_\cT$ are real, e.g. commute with the pointwise complex conjugation. The following property of the Dirichlet-to-Neumann map will be important:

\begin{thm} \label{DTNtree}
The operator $\D$ is positive and coercive, i.e. for some $c>0$ one has,
	\[
	\big( \D g, \overline{g} \big)_{H^{-\sigma d}(\Gamma), H^{\sigma d}(\Gamma)} \ge c\|g\|^2_{H^{\sigma d}(\Gamma)} \text{ for any $g\in H^{\sigma d}(\Gamma)$.}
	\] 
\end{thm}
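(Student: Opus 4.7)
The plan is to reduce the quadratic form $\big(\mathcal{D}g,\overline{g}\big)$ to the squared gradient norm of the Dirichlet extension of $g$ via the Green-type identity built into the definition~\eqref{deriv} of $\gamma_1^{\mathcal{T}}$, and then to bound this squared gradient from below by $\|g\|_{H^{\sigma d}(\Gamma)}^2$ using the Poincaré-type estimate of Lemma~\ref{lemcomp} together with the boundedness of $\gamma_0^{\mathcal{T}}$ from Theorem~\ref{thm-trace}.

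Concretely, for $g\in H^{\sigma d}(\Gamma)$ set $u:=P_{\mathcal{T}} g\in\widetilde{H}^1(\mathcal{T})$, so that $\Delta_{\mathcal{T}} u=0$ and $\gamma_0^{\mathcal{T}} u=g$. Since the maps $P_{\mathcal{T}}$ and $\gamma_0^{\mathcal{T}}$ commute with complex conjugation (as noted just before the theorem), $\overline{u}=P_{\mathcal{T}}\overline{g}\in\widetilde{H}^1(\mathcal{T})$ and $\gamma_0^{\mathcal{T}}\overline{u}=\overline{g}$. Inserting $v:=\overline{u}$ as a test function in the defining identity~\eqref{deriv} of the normal derivative makes the $\Delta_{\mathcal{T}} u$ contribution vanish and produces
\[
\big(\mathcal{D} g,\overline{g}\big)_{H^{-\sigma d}(\Gamma),H^{\sigma d}(\Gamma)}=\int_{\mathcal{T}} u'\,\overline{u'}\,\mathrm{d}\mu=\|u'\|_{L^2(\mathcal{T})}^2.
\]
In particular, the right-hand side is non-negative, which already establishes the positivity part of the statement.

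For the coercivity, I would chain two estimates. First, since $u\in\widetilde{H}^1(\mathcal{T})$, Lemma~\ref{lemcomp} supplies a constant $c_1>0$ such that $\|u'\|_{L^2(\mathcal{T})}\geq c_1\|u\|_{H^1(\mathcal{T})}$ (the $\widetilde{H}^1$-norm $\|u'\|_{L^2(\mathcal{T})}$ is equivalent to the induced $H^1(\mathcal{T})$-norm). Second, the boundedness of $\gamma_0^{\mathcal{T}}:H^1(\mathcal{T})\to H^{\sigma d}(\Gamma)$ given by Theorem~\ref{thm-trace} yields a constant $C_\ast>0$ with $\|g\|_{H^{\sigma d}(\Gamma)}=\|\gamma_0^{\mathcal{T}} u\|_{H^{\sigma d}(\Gamma)}\leq C_\ast\|u\|_{H^1(\mathcal{T})}$. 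Combining these gives
\[
\big(\mathcal{D} g,\overline{g}\big)_{H^{-\sigma d}(\Gamma),H^{\sigma d}(\Gamma)}=\|u'\|_{L^2(\mathcal{T})}^2\geq c_1^2 C_\ast^{-2}\,\|g\|_{H^{\sigma d}(\Gamma)}^2,
\]
so that the assertion holds with $c:=c_1^2 C_\ast^{-2}>0$.

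The only point that deserves real attention is the admissibility of $v=\overline{u}$ in~\eqref{deriv}: one must know that $\overline{u}\in\widetilde{H}^1(\mathcal{T})$ and that $\gamma_0^{\mathcal{T}}\overline{u}=\overline{g}$, which both follow from the explicit real structure of $P_{\mathcal{T}}$ and $\gamma_0^{\mathcal{T}}$. Once this identification is in place, the rest is a Dirichlet-principle computation and no genuine technical obstacle arises—the quantitative ingredients (Poincaré inequality, trace boundedness) have already been carried out in Lemma~\ref{lemcomp} and Theorem~\ref{thm-trace}.
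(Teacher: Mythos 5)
Your proof is correct and follows essentially the same route as the paper: substitute $v=P_{\mathcal{T}}\overline{g}=\overline{u}$ into the defining identity \eqref{deriv} to obtain $(\mathcal{D}g,\overline{g})=\|u'\|_{L^2(\mathcal{T})}^2$, then chain the norm equivalence of Lemma~\ref{lemcomp} with the boundedness of $\gamma_0^{\mathcal{T}}$ from Theorem~\ref{thm-trace}. The paper phrases the final estimate directly in terms of $\|\gamma_0^{\mathcal{T}}\|_{\widetilde H^1(\mathcal{T})\to H^{\sigma d}(\Gamma)}$ rather than factoring through $\|u\|_{H^1(\mathcal{T})}$, but this is only a cosmetic difference.
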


\begin{proof}
Let $g\in H^{\sigma d}(\Gamma)$. The substitution
$u:=P_\cT g$ and $v:=P_\cT \overline{g}$ in \eqref{deriv} yields
\begin{align*}
\big( \D g, \overline{g} \big)_{H^{-\sigma d}(\Gamma), H^{\sigma d}(\Gamma)}&=\int_\cT \big|(P_\cT g)'\big|^2\dd\mu=\|P_\cT g\|^2_{\Tilde H^1(\cT)},
\end{align*}
and with $\|g\|_{H^{\sigma d}(\Gamma)}=\|\gamma^\cT_0 P_\cT g\|_{H^{\sigma d}(\Gamma)}\le \|\gamma^\cT_0\|_{\Tilde H^1(\cT)\to H^{\sigma d}(\Gamma)}\|P_\cT g\|_{\Tilde H^1(\cT)}$
we arrive at
\[
\big( \D g, \overline{g} \big)_{H^{-\sigma d}(\Gamma), H^{\sigma d}(\Gamma)}\ge\dfrac{\|g\|^2_{H^{\sigma d}(\Gamma)}}{\|\gamma^\cT_0\|_{\Tilde H^1(\cT)\to H^{\sigma d}(\Gamma)}^2}.\qedhere
\]
\end{proof}

\subsection{Boundary values on finite sections}\label{sec-fin}
The question that we study in two next sections is the following. Assume that the infinite tree $\mathcal{T}$ is replaced by a tree $\widetilde{\T}_N$ which only contains a finite number of generations, and instead of solving the boundary-value problem for the Laplace equation with the Cauchy data $\gamma_0^{\mathcal{T}}u=g$, we solve the boundary-value problem on $\widetilde{\T}_N$, by imposing on its truncated boundary $\gamma_0^{\widetilde{\T_N}}u_N=P_Ng$. Is it possible to choose $\widetilde{\mathcal{T}}_N$ so that the associated Dirichlet-to-Neumann map is suitably close to $\D P_N$? In these two sections we give a positive answer to this question (Theorem \ref{thm221}), provided some auxiliary assumptions on $\mathcal{T}$. We start by formalizing the problem.

For $N\in\N$ denote by $\cT_N\equiv\cT^p_N\big((\ell_{n,k}),(\omega_{n,k})\big)$ the finite portion of the tree $\cT$ obtained by keeping the edges $e_{n,k}$ with $n\le N$ only, with the same edge lengths $\ell_{n,k}$ and edge weights $\omega_{n,k}$, i.e.
\[
\cT_N:=\big\{(n,k,t)\in\cT:\ n\le N\big\},
\]
and for $f:\cT_N\to\C$ one defines
\[
\int_{\cT_N} f\dd\mu:=\sum_{n=0}^N\sum_{k=0}^{p^n-1} \int_{L_{n,k}-\ell_{n,k}}^{L_{n,k}}f_{n,k}(t) \omega_{n,k}\dd t,
\qquad
f_{n,k}:=f(n,k,\cdot).
\]
This induces the spaces
\begin{gather*}
	L^2(\cT_N):=\big\{f:\cT_{N}\to\C:\ \|f\|^2_{L^2(\cT_N)}:=\int_{\cT_N} |f|^2\dd\mu<\infty\big\},\\
	\langle f,g\rangle_{L^2(\cT_N)}:=\int_{\cT_N} f\overline{g}\,\dd\mu,
\end{gather*}
and
\begin{gather*}
	H^1(\cT_N):=\big\{f\in L^2(\cT_N): f \text{ is continuous with } f'\in  L^2(\cT_N)\big\},\\
	\langle f,g\rangle_{H^1(\cT_N)}:=\langle f,g\rangle_{L^2(\cT_N)}+\langle f',g'\rangle_{L^2(\cT_N)}
\end{gather*}
as well as
\[
\Tilde H^1(\cT_N):=\big\{f\in H^1(\cT_N):\ f(o)=0\big\}.
\]
As the embedding $\Tilde H^1(\cT)\hookrightarrow L^2(\cT)$ is compact, there is $f_N\in \Tilde H^1(\cT)$ with $f_N\not\equiv 0$ such that for all $f\in \Tilde H^1(\cT_N)$ with $f\not\equiv 0$ it holds
\[
a_N:=\dfrac{\|f'_N\|^2_{L^2(\cT_N)}}{\|f_N\|^2_{L^2(\cT_N)}}\le \dfrac{\|f'\|^2_{L^2(\cT_N)}}{\|f\|^2_{L^2(\cT_N)}},
\]
and $a_N>0$ as $f_N$ cannot be constant. Hence, we have the Poincar\'e inequality
\[
\|f'\|^2_{L^2(\cT_N)}\ge a_N \|f\|^2_{L^2(\cT_N)} \text{ for all } f\in \Tilde H^1(\cT_N),
\]
which shows that
\[
\langle f,g\rangle_{\Tilde H^1(\cT_N)}:=\langle f',g'\rangle_{L^2(\cT_N)}
\]
is a scalar product on $\Tilde H^1(\cT_N)$ which is equivalent to the scalar product inherited from $H^1(\cT_N)$.

Recall that the subspaces $V_N(\Gamma)\subset L^2(\Gamma)$ were defined in \eqref{pvn}.
The set of the pending nodes $X_{N,K}$ with $K\in\{0,\dots,p^N-1\big\}$ represents the natural boundary of $\cT_N$, and by using the same multiscale decomposition $(\Gamma_{n,k})$ of $\Gamma$ one defines the bounded surjective Dirichlet trace operator
\[
	\gamma^{\T_N}_0:\  \widetilde{H}^1(\T_N) \ni f\mapsto \sum_{K=0}^{p^N-1} f(X_{N,K}) \one_{\Gamma_{N,K}}\in V_N(\Gamma)
\]
and the spaces
\[
H^1_0(\cT_N):=\big\{f\in H^1(\cT_N):\ \gamma^{\cT_N}_0f=0\big\},
\qquad
\Tilde H^1_0(\cT_N):=\big\{f\in \Tilde H^1(\cT_N):\ \gamma^{\cT_N}_0f=0\big\}.
\]
Any function in $H^1_0(\cT_N)$, respectively $\Tilde H^1_0(\cT_N)$, can be extended by zero to a function in $H^1_0(\cT)$, respectively $\Tilde H^1_0(\cT)$, and this extension preserves the respective norms.

\begin{defn}
	For $f\in H^1(\T_N)$ define its Laplacian
	\[
	\Delta_{\T_N} f \in H^{-1}(\cT_N):= \big(\Tilde H^1_0(\cT_N)\big)'
	\]
	by
	\begin{equation}
		\label{eq-deltatn}
		\big(\Delta_{\cT_N} f, g \big)_{H^{-1}(\cT_N),\widetilde{H}^{1}_0(\T_N)} := - \int_{\T_N} f' g' \dd\mu\equiv - \langle f, \overline{g} \rangle_{\widetilde{H}^1(\T_N)} \text{ for all } g\in \widetilde{H}^1_0(\T_N).
	\end{equation}

	We also define 
	\[
	H^1_\Delta(\T_N) :=\Big\{ f \in H^1(\T_N):\ \Delta_{\T_N} f \in L^2(\T_N) \Big\},
	\]
	which will be equipped with the scalar product
	\[
	\langle f,g\rangle_{H^1_\Delta(\T_N)}:=\langle f,g\rangle_{H^1(\T_N)}+\langle \Delta_{\cT_N}f,\Delta_{\cT_N} g\rangle_{L^2(\T_N)}.
	\]
\end{defn}
Remark that for any $f\in H^1_\Delta(\cT)$ one also has $f\in H^1_\Delta(\cT_N)$ with $\Delta_{\cT_N} f=\Delta_\cT f$ on $\cT_N$. Similarly to Proposition~\ref{prop23} one shows:

\begin{lem}\label{prop23-tn}
	For any $f=(f_{n,k})\in H^1(\T_N)$ and $h\in L^2(\T_N)$ the following two conditions are equivalent:
	\begin{enumerate}
		\item[(a)] $\Delta_{\T_N}f=h$,
		\item[(b)] $f''\in L^2(\T_N)$, and $f$ satisfies the Kirchhoff transmission conditions
		\begin{equation} \label{kirchhoff-tn}
			f'_{n,k}(L_{n,k}^-)\omega_{n,k} = \sum_{j=0}^{p-1} f'_{n+1,pk+j}(L_{n,k}^+)\omega_{n+1,pk+j}
		\end{equation}
		at each node $X_{n,k}\in T_N$ with $n\le N-1$, and $f''=h$.
	\end{enumerate}	
\end{lem}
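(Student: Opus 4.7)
The plan is to mirror the proof of Proposition~\ref{prop23}, exploiting that the defining identity \eqref{eq-deltatn} for $\Delta_{\cT_N}$ is structurally identical to \eqref{eq-deltat}, the only difference being that test functions $g\in \Tilde H^1_0(\cT_N)$ are required to vanish at the pendant nodes $X_{N,K}$ rather than decay at infinity. This distinction is precisely what allows Kirchhoff conditions to be read off only at the internal nodes, i.e.\ for $n\le N-1$.

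First I would establish the implication (a)$\Rightarrow$(b) by a two-stage test function argument. For the first stage, fix an internal edge $e_{n,k}$ with $n\le N$ and pick a test function $g\in \Tilde H^1_c(\cT_N)$ whose support lies in $e_{n,k}$ and whose restriction $g_{n,k}\in H^1_0(e_{n,k})$. Such $g$ automatically lies in $\Tilde H^1_0(\cT_N)$ (it vanishes at every vertex, including all $X_{N,K}$), so plugging it into \eqref{eq-deltatn} and using that $\omega_{n,k}$ is constant on $e_{n,k}$ gives $h_{n,k}=f''_{n,k}$ in the distributional sense on $e_{n,k}$. Since this holds on every edge and $h\in L^2(\cT_N)$, one obtains $f''\in L^2(\cT_N)$ with $f''=h$.

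For the second stage, fix an internal vertex $X_{n,k}$ with $n\le N-1$ and choose $g\in \Tilde H^1_0(\cT_N)$ supported on the union of $e_{n,k}$ and the $p$ child edges $e_{n+1,pk+j}$, arranged so that $g(X_{n,k})$ can take an arbitrary value while $g$ still vanishes at all other vertices (in particular at every leaf $X_{N,K}$, which is possible because $n+1\le N$). Edgewise integration by parts, combined with $f''_{n,k}=h|_{e_{n,k}}$ from the first stage, transforms the right-hand side of \eqref{eq-deltatn} into a sum of volume integrals that cancel against $\int_{\cT_N}h g\,\dd\mu$, leaving only the boundary term
\[
\Big(f'_{n,k}(L_{n,k}^-)\omega_{n,k}-\sum_{j=0}^{p-1}f'_{n+1,pk+j}(L_{n,k}^+)\omega_{n+1,pk+j}\Big)g(X_{n,k})=0.
\]
Arbitrariness of $g(X_{n,k})$ yields \eqref{kirchhoff-tn}. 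Crucially, this construction is unavailable at the leaves $X_{N,K}$ because any admissible $g\in \Tilde H^1_0(\cT_N)$ must satisfy $g(X_{N,K})=0$; this is why no Kirchhoff condition is imposed there.

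For the converse (b)$\Rightarrow$(a), given any $g\in \Tilde H^1_0(\cT_N)$, one applies edgewise integration by parts to $\int_{\cT_N} f''g\,\dd\mu$. The boundary contributions at the root $o$ vanish since $g(o)=0$, the contributions at pendant nodes $X_{N,K}$ vanish because $\gamma^{\cT_N}_0 g=0$, and the contributions at each internal vertex collapse to zero by \eqref{kirchhoff-tn}. What remains is precisely $-\int_{\cT_N}f'g'\,\dd\mu$, which is \eqref{eq-deltatn} with $h=f''$. No step presents a genuine obstacle; the only point requiring care is the bookkeeping that ensures the test function in the second stage exists with $g(X_{N,K})=0$, which is guaranteed by the strict inequality $n\le N-1$.
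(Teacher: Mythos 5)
Your proof is correct and follows exactly the route the paper intends: the paper gives no separate proof of Lemma~\ref{prop23-tn}, merely noting that one argues as in Proposition~\ref{prop23}, and your two-stage test-function argument (first on a single edge, then on a star around an internal vertex) is that argument, with the correct observation that vanishing of $g$ at the pendant nodes $X_{N,K}$ is what removes any Kirchhoff constraint at the leaves.
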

The assertion (b) in Lemma~\ref{prop23-tn} does not contain any condition at the pending nodes $X_{N,K}$, which is used in the following definition:

\begin{defn}
	For $u \in H^1_{\Delta}(\T_N)$ define its normal
	derivative 
	\[
	\gamma^{\T_N}_1 u \in V_N(\Gamma)\subset L^2(\Gamma)
	\]
	by
	\[
	\int_{\Gamma} (\gamma^{\T_N}_1 u) \gamma^{\T_N}_0 v \,\dd s:= \int_{\T_N}
	(\Delta_{\cT_N} u) \overline{v} \,\dd\mu  + \int_{\T_N}   u'  v' \,\dd\mu ,
	\]
	for all $v\in \Tilde H^1(\T_N)$. 
	Using Lemma \ref{prop23-tn} one directly shows
	\begin{align} \label{fnderiv} 
		\gamma_1^{\T_N} u = \sum_{K=0}^{p^N-1} \frac{\omega_{N,K}}{|\Gamma_{N,K}|} u_{N,K}'(L_{N,K}^-) \one_{\Gamma_{N,K}}. 
	\end{align}
\end{defn}
The following result can be proven literally  as Lemma \ref{lem18}. 
	 \begin{lem}\label{lem18_Tn}
		The map
		\[
		H^1_\Delta(\cT_{N})\ni u\mapsto \gamma^{\cT_N}_1 u \in H^{-\sigma d}(\Gamma)
		\]
		is a well-defined bounded linear operator.
	\end{lem}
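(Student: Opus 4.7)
The plan is to mirror the proof of Lemma~\ref{lem18}, using an explicit piecewise-linear lift into $\Tilde H^1(\cT_N)$ in place of the Poisson operator $P_\cT$. Well-definedness and linearity as a map into $H^{-\sigma d}(\Gamma)$ are immediate from \eqref{fnderiv}, which shows $\gamma_1^{\cT_N}u\in V_N(\Gamma)\subset L^2(\Gamma)$, together with the continuous embedding $L^2(\Gamma)\hookrightarrow H^{-\sigma d}(\Gamma)$ (valid since $\sigma d>0$). Only the boundedness estimate requires work.

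To establish boundedness, I proceed by duality. Given $g\in H^{\sigma d}(\Gamma)$, using that $\gamma_1^{\cT_N}u\in V_N(\Gamma)$ and that $P_N$ is the orthogonal projector on $V_N(\Gamma)$ in $L^2(\Gamma)$, one has
\[
\big(\gamma_1^{\cT_N}u,g\big)_{H^{-\sigma d}(\Gamma),H^{\sigma d}(\Gamma)}=\int_\Gamma (\gamma_1^{\cT_N}u)\,g\,\dd s=\int_\Gamma (\gamma_1^{\cT_N}u)\,P_N g\,\dd s.
\]
The key step is to lift $P_N g$ to a function $v_N\in\Tilde H^1(\cT_N)$ with $\gamma_0^{\cT_N}v_N=P_N g$ and a controlled $H^1$-norm. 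For this I take the piecewise-linear function $v$ on $\cT$ constructed in Lemma~\ref{lem-vg} (with nodal values $v(X_{n,k})=g_{n,k}$) and set $v_N:=v|_{\cT_N}$. The identity $\gamma_0^{\cT_N}v_N=P_N g$ follows since both sides equal $\sum_{K=0}^{p^N-1} g_{N,K}\one_{\Gamma_{N,K}}$, while Lemma~\ref{lem-vg} yields the norm bound $\|v_N\|_{H^1(\cT_N)}\le\|v\|_{H^1(\cT)}\le \Tilde C\|g\|_{H^{\sigma d}(\Gamma)}$ with $\Tilde C$ independent of $g$ (and, as a bonus, of $N$).

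Plugging $v_N$ into the definition of $\gamma_1^{\cT_N}u$ and applying the Cauchy--Schwarz inequality then gives
\begin{align*}
\Big|\int_\Gamma (\gamma_1^{\cT_N}u)\,\gamma_0^{\cT_N}v_N\,\dd s\Big|
&\le \|\Delta_{\cT_N}u\|_{L^2(\cT_N)}\|v_N\|_{L^2(\cT_N)}+\|u'\|_{L^2(\cT_N)}\|v_N'\|_{L^2(\cT_N)}\\
&\le 2\|u\|_{H^1_\Delta(\cT_N)}\|v_N\|_{H^1(\cT_N)}.
\end{align*}
Chaining the estimates and taking the supremum over $g$ in the unit ball of $H^{\sigma d}(\Gamma)$ yields $\|\gamma_1^{\cT_N}u\|_{H^{-\sigma d}(\Gamma)}\le 2\Tilde C\|u\|_{H^1_\Delta(\cT_N)}$. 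I do not anticipate any substantive obstacle; the only noteworthy point is that using the piecewise-linear construction of Lemma~\ref{lem-vg} avoids the need to solve a Dirichlet problem on $\cT_N$ and produces a constant that is uniform in $N$, which will be exploited in the subsequent sections.
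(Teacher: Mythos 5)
Your proof is correct. The paper itself gives no written proof here — it simply states that the result "can be proven literally as Lemma~\ref{lem18}", which would mean estimating the duality pairing by lifting the boundary datum via the finite-tree Dirichlet solve $P_{\cT_N}$. You follow the same overall duality strategy but choose a different lift: you restrict to $\cT_N$ the piecewise-linear lift $v$ of Lemma~\ref{lem-vg}, using that $\gamma_0^{\cT_N}(v|_{\cT_N})=P_N g$ (true because $v(X_{N,K})=g_{N,K}$ by construction) together with the reduction $(\gamma_1^{\cT_N}u,g)=\int_\Gamma(\gamma_1^{\cT_N}u)\,P_N g\,\dd s$, valid since $\gamma_1^{\cT_N}u\in V_N(\Gamma)$ and $P_N$ is the $L^2$-orthogonal projector onto $V_N(\Gamma)$. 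This is a sound and, arguably, cleaner substitute for $P_{\cT_N}$: restricting the global lift costs nothing ($\|v_N\|_{H^1(\cT_N)}\le\|v\|_{H^1(\cT)}\le \Tilde C\|g\|_{H^{\sigma d}(\Gamma)}$), so you get a bound that is uniform in $N$ without having to control the Poincar\'e constant of $\cT_N$ or the norm of the finite-tree Poisson operator. Your observation that well-definedness is immediate from the explicit formula~\eqref{fnderiv} is also a slight shortcut relative to the argument for Lemma~\ref{lem18}, where it is verified by testing against $\Tilde H^1_0$; both are legitimate. The only cosmetic issue is the stray complex conjugation $\overline{v}$ in the paper's display defining $\gamma_1^{\cT_N}$, which is inconsistent with the bilinear duality used throughout and with the $\cT$-version~\eqref{deriv}; your Cauchy--Schwarz estimate goes through regardless.
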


Remark that $\gamma^{\cT_N}_0 u\xrightarrow{N\to\infty}\gamma^{\cT} u$ in $H^{\sigma d}(\Gamma)$ for any $u\in H^1(\cT)$, see Theorem~\ref{thm-trace}. Let us establish a related approximation result for $\gamma^\cT_1 u$.

\begin{lem}\label{lem-weak}
	For any $u\in H^1_\Delta(\cT)$ one has $\gamma^{\cT_N}_1 u\xrightarrow{N\to\infty}\gamma^\cT_1 u$
	weakly in $H^{-\sigma d}(\Gamma)$.
\end{lem}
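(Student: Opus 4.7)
My plan is to test the distributional difference $\gamma^{\cT_N}_1 u - \gamma^\cT_1 u$ against an arbitrary $g\in H^{\sigma d}(\Gamma)$ and show that the pairing tends to zero as $N\to\infty$. Weak convergence in $H^{-\sigma d}(\Gamma)$ is exactly the pointwise convergence of these functionals, so it suffices to prove it for each fixed $g$ without invoking any density argument, provided one can exhibit a lifting of $g$ that is consistent with every truncation simultaneously.

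For the latter, I would use the explicit piecewise-linear lifting $v\in\Tilde H^1(\cT)$ from Lemma \ref{lem-vg}, which satisfies $v(o)=0$, $v(X_{n,k})=g_{n,k}$ at every vertex, and $\gamma^\cT_0 v=g$. The restriction $v|_{\cT_N}$ then belongs to $\Tilde H^1(\cT_N)$, and since $v(X_{N,K})=g_{N,K}$ for all $K\in\{0,\dots,p^N-1\}$, its truncated Dirichlet trace is
\[
\gamma^{\cT_N}_0 (v|_{\cT_N})=\sum_{K=0}^{p^N-1} g_{N,K}\one_{\Gamma_{N,K}}=P_N g.
\]

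With this lifting at hand, since $\gamma^{\cT_N}_1 u\in V_N(\Gamma)\subset L^2(\Gamma)$ and $P_N$ is the $L^2$-orthogonal projector,
\[
\big(\gamma^{\cT_N}_1 u, g\big)_{H^{-\sigma d}(\Gamma),H^{\sigma d}(\Gamma)}=\int_\Gamma (\gamma^{\cT_N}_1 u)\,g\,\dd s=\int_\Gamma (\gamma^{\cT_N}_1 u)\,(P_N g)\,\dd s.
\]
Plugging $v|_{\cT_N}$ as test function into the defining identity of $\gamma^{\cT_N}_1 u$, and using that $\Delta_{\cT_N}u$ is the restriction of $\Delta_\cT u$ to $\cT_N$, the right-hand side rewrites as
\[
\int_{\cT_N} (\Delta_\cT u)\,v\,\dd\mu + \int_{\cT_N} u'\,v'\,\dd\mu.
\]

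Both integrands are products of $L^2(\cT)$-functions, hence belong to $L^1(\cT)$, and dominated convergence yields the limits $\int_\cT (\Delta_\cT u)\,v\,\dd\mu$ and $\int_\cT u'\,v'\,\dd\mu$. Their sum equals $\big(\gamma^\cT_1 u, g\big)_{H^{-\sigma d}(\Gamma),H^{\sigma d}(\Gamma)}$ by the definition \eqref{deriv}, which concludes the proof. The only non-obvious ingredient is the existence of a single lifting $v$ that is adapted to every finite section $\cT_N$ at once; this is precisely what Lemma \ref{lem-vg} supplies, and once it is available the remainder reduces to $L^2$-orthogonality of $P_N$ and dominated convergence.
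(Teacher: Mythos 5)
Your proof is correct and follows essentially the same route as the paper: both rely on the lifting $v$ from Lemma~\ref{lem-vg} (with $v(X_{N,K})$ equal to the cell averages of $g$, so $\gamma^{\cT_N}_0 v = P_N g$), the definition of the truncated conormal trace tested against $v|_{\cT_N}$, and passage to the limit $\int_{\cT_N}\to\int_\cT$ for the two $L^1(\cT)$ integrands. The only cosmetic difference is that you invoke the $L^2$-orthogonality of $P_N$ abstractly to turn $\int_\Gamma(\gamma^{\cT_N}_1 u)g\,\dd s$ into $\int_\Gamma(\gamma^{\cT_N}_1 u)(P_N g)\,\dd s$, whereas the paper verifies this identity by expanding both factors explicitly via \eqref{fnderiv} and the cell-average formula; these are the same computation.
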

\begin{proof}
	Let $u\in H^1_\Delta(\cT)$ and $g\in H^{\sigma d}(\Gamma)$. In virtue of Lemma \ref{lem-vg} there is a function $v\in \Tilde H^1(\cT)$ such that
	\[
	\gamma^\cT_0 v=g,\quad v(X_{n,k})=\dfrac{1}{|\Gamma_{n,k}|}\int_{\Gamma_{n,k}} g\,\dd s \text{ for all }(n,k).
	\]
	Then
	\begin{align*}
		(\gamma^{\cT}_1 &u,g)_{H^{-\sigma d}(\Gamma),H^{\sigma d}(\Gamma)}=
		\int_{\cT}(\Delta_\T u)v\,\dd\mu+\int_{\cT}u'v'\,\dd\mu\\
		&=\lim_{N\to\infty}\Big( \int_{\cT_N}(\Delta_{\T_N} u)v\,\dd\mu+\int_{\cT_N}u'v'\,\dd\mu\Big)
		=\lim_{N\to\infty}\int_\Gamma (\gamma^{\cT_N}_1 u)(\gamma^{\cT_N}_0 v)\,\dd s\\
		&=\lim_{N\to\infty}\int_\Gamma \bigg( \sum_{K=0}^{p^N-1} \frac{\omega_{N,K}}{|\Gamma_{N,K}|} u_{N,K}'(L_{N,K}^-) \one_{\Gamma_{N,K}}\bigg)\bigg( \sum_{K=0}^{p^N-1} \dfrac{1}{|\Gamma_{N,K}|}\int_{\Gamma_{N,K}} g\,\dd s \,\one_{\Gamma_{N,K}}\bigg)\,\dd s\\
		&= \lim_{N\to\infty}\sum_{K=0}^{p^N-1} \frac{\omega_{N,K}}{|\Gamma_{N,K}|} u_{N,K}'(L_{N,K}^-) \int_{\Gamma_{N,K}} g\,\dd s \\
		&= \lim_{N\to\infty}\int_\Gamma\bigg(\sum_{K=0}^{p^N-1} \frac{\omega_{N,K}}{|\Gamma_{N,K}|} u_{N,K}'(L_{N,K}^-) \one_{\Gamma_{N,K}} \bigg)g\,\dd s\\
		&=\lim_{N\to\infty}(\gamma^{\cT_N}_1 u,g)_{H^{-\sigma d}(\Gamma),H^{\sigma d}(\Gamma)}. \qedhere
	\end{align*}	
\end{proof}

Using the scalar product $\langle\cdot,\cdot\rangle_{\widetilde{H}^1(\T_N)}$ we obtain the orthogonal direct sum decomposition
\begin{equation}
	\label{h1decomptn}
	\widetilde{H}^1(\T_N) = \widetilde{H}^1_0(\T_N) \oplus \big( \Tilde H^1(\cT_N)\cap \ker\Delta_{\cT_N}\big).
\end{equation}
As $\widetilde{H}^1_0(\T_N)$ is exactly the kernel of the map
\[
\gamma^{\cT_N}_0:\ \Tilde H^1(\cT_N)\to V_N(\Gamma),
\]
it follows that for any $g\in V_N(\Gamma)$ there is a unique solution $u=u_g$
of the Dirichlet problem
\begin{equation}
	\label{eq-dirichlet-tn}
	\begin{aligned}
		\Delta_{\T_N} u &= 0, &
		\gamma^{\T_N}_0 u&= g, &  u\in \Tilde H^1(\cT_N),
	\end{aligned}
\end{equation}
and the Dirichlet-to-Neumann map $\D^{\cT_N}$ for $\cT_N$ is defined by
\[
\D^{\cT_N}:\ V_N(\Gamma)\ni g\mapsto \gamma^{\cT_N}_1 u_g \in V_N(\Gamma).
\]

\subsection{Approximations of the Dirichlet-to-Neumann map}\label{sec-dtn}

The purpose of this subsection is to construct a good approximation for the Dirichlet-to-Neumann map $\D$ using finite-dimensional operators. These constructions are mainly adaptations of the ideas used in~\cite{msv} for discrete dyadic trees to the case of metric trees.

In Subsection \ref{sec23} we have seen that $P_N g\xrightarrow{N\to\infty}g$ in $H^{\sigma d}(\Gamma)$
for any $g\in H^{\sigma d}(\Gamma)$. Let us show that the convergence rate can be estimated under additional regularity assumptions. Recall that the norms $\|\cdot\|_{A^r(\Gamma)}$ are defined in \eqref{norm-ar}.

\begin{lem}\label{lem-pngg}
	Let $\sigma'$ be such that
	\[
	0<\sigma d<\sigma' d<\frac{1}{2},
	\]
	then for any $N\in\N_0$ and any $g\in H^{\sigma' d}(\Gamma)$ one has
	\begin{equation}
		\label{pngg}
	\|P_Ng-g\|_{A^{\sigma d}(\Gamma)} \le \dfrac{p^{2\sigma}}{p^{2\sigma}-1}p^{-N(\sigma'- \sigma )}\|g\|_{A^{\sigma'd}(\Gamma)}.
	\end{equation}
\end{lem}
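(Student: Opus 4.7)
The plan is to expand $\|P_N g - g\|^2_{A^{\sigma d}(\Gamma)}$ using the definition \eqref{norm-ar} and exploit the commutation relation $P_n P_N = P_{\min(n,N)}$ from \eqref{eq:commutativity} to simplify each summand. A short computation shows that the $P_0$-term vanishes (since $P_0 P_N = P_0$), that for $n < N$ one has $(I-P_n)(P_N - I)g = P_N g - g$ (independent of $n$), and that for $n \ge N$ one has $(I - P_n)(P_N - I)g = P_n g - g$. This yields the clean splitting
\begin{equation*}
\|P_N g - g\|^2_{A^{\sigma d}(\Gamma)} = \Big(\sum_{n=0}^{N-1} p^{2n\sigma}\Big)\, \|P_N g - g\|^2_{L^2(\Gamma)} + \sum_{n=N}^{\infty} p^{2n\sigma}\, \|g - P_n g\|^2_{L^2(\Gamma)}.
\end{equation*}

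Next I would observe that $\beta_n := \|g - P_n g\|^2_{L^2(\Gamma)}$ is non-increasing, since $P_n P_N = P_n$ for $n \le N$ implies by orthogonality $\beta_n = \beta_N + \|(P_N - P_n)g\|^2_{L^2(\Gamma)}$, so $\beta_N \le \beta_n$. In particular the tail sum dominates its leading term: $\sum_{n \ge N} p^{2n\sigma} \beta_n \ge p^{2N\sigma} \beta_N$. Combined with the geometric identity $\sum_{n=0}^{N-1} p^{2n\sigma} = (p^{2N\sigma} - 1)/(p^{2\sigma}-1)$, this gives that the first (finite) sum is bounded by $(p^{2\sigma}-1)^{-1}$ times the tail sum. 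Hence the entire norm squared is bounded by $\frac{p^{2\sigma}}{p^{2\sigma}-1} \sum_{n \ge N} p^{2n\sigma} \beta_n$.

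For the tail sum the trick is to write $p^{2n\sigma} = p^{2n\sigma'} \cdot p^{-2n(\sigma'-\sigma)}$ and, since $\sigma' > \sigma$, pull out the maximum of the decreasing weight over $n \ge N$:
\begin{equation*}
\sum_{n \ge N} p^{2n\sigma} \beta_n \le p^{-2N(\sigma' - \sigma)} \sum_{n \ge N} p^{2n\sigma'} \beta_n \le p^{-2N(\sigma'-\sigma)} \|g\|^2_{A^{\sigma' d}(\Gamma)}.
\end{equation*}
Assembling these estimates and taking square roots gives $\|P_N g - g\|_{A^{\sigma d}(\Gamma)} \le \sqrt{\frac{p^{2\sigma}}{p^{2\sigma}-1}}\, p^{-N(\sigma'-\sigma)} \|g\|_{A^{\sigma'd}(\Gamma)}$; since $p^{2\sigma}/(p^{2\sigma}-1) > 1$ the square root is $\le p^{2\sigma}/(p^{2\sigma}-1)$, matching \eqref{pngg}.

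The main pitfall — and the step that drives the choice of argument — is the tail estimate: the naive pointwise bound $\beta_n \le p^{-2n\sigma'}\|g\|^2_{A^{\sigma'd}}$ followed by summation of a geometric series produces a prefactor $(1 - p^{-2(\sigma'-\sigma)})^{-1}$, which blows up as $\sigma' \downarrow \sigma$ and does not recover the clean constant in \eqref{pngg}. The remedy above (keep the sum, factor out the largest weight at $n = N$) is the crucial ingredient; the rest of the proof is essentially bookkeeping of the geometric sum and the monotonicity of $\beta_n$.
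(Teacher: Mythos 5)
Your proof is correct and uses essentially the same mechanism as the paper: expand $\|P_Ng-g\|^2_{A^{\sigma d}}$ via \eqref{norm-ar}, use $P_nP_N=P_{\min(n,N)}$ to split it into a finite geometric piece in $\|P_Ng-g\|^2_{L^2(\Gamma)}$ plus a tail over $n\ge N$, and factor $p^{-2N(\sigma'-\sigma)}$ out of the tail, arriving at the same constant $p^{2\sigma}/(p^{2\sigma}-1)$. One small logical remark: the monotonicity of $\beta_n:=\|g-P_ng\|^2_{L^2(\Gamma)}$ is never actually used in your argument, since the inequality $\sum_{n\ge N}p^{2n\sigma}\beta_n\ge p^{2N\sigma}\beta_N$ holds simply because the right-hand side is one of the non-negative summands on the left.
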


\begin{proof}
With the definition of the norm $\|\cdot\|_{A^{\sigma' d}(\Gamma)}$, the right-hand side of 
 \eqref{pngg} rewrites as
\begin{align*}
	p^{-2N(\sigma'- \sigma )} \|g\|^2_{A^{\sigma'd}(\Gamma)} &= p^{-2N(\sigma'-\sigma)}\|P_0g\|^2_{L^2(\Gamma)}+ \sum_{n=0}^\infty p^{2n\sigma' - 2N\sigma'+2N\sigma } \|g-P_ng\|^2_{L^2(\Gamma)} \\
	&\ge p^{2N\sigma} \|g-P_Ng\|^2_{L^2(\Gamma)}  +p^{2N\sigma} \sum_{n=1}^\infty p^{2n\sigma'}\|g-P_{n+N}g\|^2_{L^2(\Gamma)}=:I. 
\end{align*}

To study the left-hand side of \eqref{pngg} we note first that
	\begin{align*}
	P_0(P_N g-g)&=P_0P_N g- P_0g\\
	&=P_{\min\{N,0\}}-P_0 g=P_0 g-P_0 g=0,\\[\smallskipamount]
	(P_Ng-g) - P_n(P_Ng -g) &= P_Ng - g - P_nP_Ng + P_n g\\
	&=P_Ng - g - P_{\min\{n,N\}} + P_n g =P_{\max\{N,n\}}g-g,
	\end{align*}
which yields	
	\begin{align*}
		\|P_Ng-g\|^2_{A^{\sigma d}(\Gamma)} &= \sum_{n=0}^{N} p^{2n\sigma } \|P_Ng-g\|^2 + \sum_{n=N+1}^\infty p^{2n\sigma } \|P_ng-g\|^2_{L^2(\Gamma)} \\
		&= \frac{p^{2\sigma (N+1)}-1}{p^{2\sigma}-1} \|P_Ng-g\|^2_{L^2(\Gamma)} + p^{2N\sigma}\sum_{n=1}^{\infty} p^{2n\sigma} \|P_{n+N}g-g\|^2_{L^2(\Gamma)}.
	\end{align*}
By estimating
\[
\dfrac{p^{2\sigma (N+1)}-1}{p^{2\sigma}-1}=
\dfrac{p^{2\sigma}-\dfrac{1}{p^{2\sigma N}}}{p^{2\sigma}-1} p^{2N \sigma }
\le \dfrac{p^{2\sigma}}{p^{2\sigma}-1}p^{2N \sigma },
\qquad
p^{2n\sigma}\le \dfrac{p^{2\sigma}}{p^{2\sigma}-1}p^{2n\sigma'}
\]
we arrive at
\begin{align*}
	\|P_Ng-g\|^2_{A^{\sigma d}(\Gamma)} &\le \dfrac{p^{2\sigma}}{p^{2\sigma}-1} p^{2N\sigma} \|P_Ng-g\|^2_{L^2(\Gamma)} + \dfrac{p^{2\sigma}}{p^{2\sigma}-1}p^{2N\sigma}\sum_{n=1}^{\infty} p^{2n\sigma'} \|P_{n+N}g-g\|^2_{L^2(\Gamma)}\\
	&\le \dfrac{p^{2\sigma}}{p^{2\sigma}-1}I\le \dfrac{p^{2\sigma}}{p^{2\sigma}-1}p^{-2N(\sigma'- \sigma )} \|g\|^2_{A^{\sigma'd}(\Gamma)}.\qedhere
\end{align*}
\end{proof}

As a corollary we obtain:
\begin{cor}\label{cor219}
Let $\sigma'$ be such that
\[
\sigma d <\sigma'd<\frac{1}{2},
\]
then there is a constant $c>0$ such that for any $N\in\N_0$ it holds
	\begin{equation*}
		\| \D\circ P_N  - \D  \|_{H^{\sigma' d}(\Gamma) \to H^{-\sigma d}(\Gamma)} \le c p^{-N(\sigma'-\sigma)}.
	\end{equation*}
\end{cor}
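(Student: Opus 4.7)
The plan is to derive the corollary almost directly from Lemma \ref{lem-pngg} by composing with the bounded operator $\D$ and then converting the $A^{r}$-norm bound into an $H^{r}$-norm bound via the equivalence of these norms (recalled just before equation \eqref{norm-ar}).

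More precisely, I would first write, for any $g\in H^{\sigma' d}(\Gamma)$,
\[
\bigl\|(\D\circ P_N - \D)g\bigr\|_{H^{-\sigma d}(\Gamma)}
= \bigl\|\D(P_N g - g)\bigr\|_{H^{-\sigma d}(\Gamma)}
\le \|\D\|_{H^{\sigma d}(\Gamma)\to H^{-\sigma d}(\Gamma)}\,\|P_N g - g\|_{H^{\sigma d}(\Gamma)},
\]
using that $\D$ is bounded (Definition right after Lemma \ref{lem18}, since $P_Ng\in V_N(\Gamma)\subset H^{\sigma d}(\Gamma)$ so $P_Ng-g\in H^{\sigma d}(\Gamma)$ when $g\in H^{\sigma' d}(\Gamma)\subset H^{\sigma d}(\Gamma)$).

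Next, I would invoke the equivalence $\|\cdot\|_{H^{\sigma d}(\Gamma)}\asymp\|\cdot\|_{A^{\sigma d}(\Gamma)}$ and $\|\cdot\|_{H^{\sigma' d}(\Gamma)}\asymp\|\cdot\|_{A^{\sigma' d}(\Gamma)}$ (both are stated in the paragraph introducing \eqref{norm-ar}) to obtain constants $C_1,C_2>0$ with
\[
\|P_N g - g\|_{H^{\sigma d}(\Gamma)} \le C_1 \|P_N g - g\|_{A^{\sigma d}(\Gamma)},
\qquad
\|g\|_{A^{\sigma' d}(\Gamma)} \le C_2 \|g\|_{H^{\sigma' d}(\Gamma)}.
\]
Then Lemma \ref{lem-pngg} applied to $g\in H^{\sigma' d}(\Gamma)$ gives
\[
\|P_N g - g\|_{A^{\sigma d}(\Gamma)} \le \dfrac{p^{2\sigma}}{p^{2\sigma}-1}\, p^{-N(\sigma'-\sigma)} \|g\|_{A^{\sigma' d}(\Gamma)}.
\]

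Chaining these inequalities yields
\[
\bigl\|(\D\circ P_N - \D)g\bigr\|_{H^{-\sigma d}(\Gamma)}
\le c\, p^{-N(\sigma'-\sigma)} \|g\|_{H^{\sigma' d}(\Gamma)}
\]
with $c := \|\D\|\, C_1 C_2 \,\dfrac{p^{2\sigma}}{p^{2\sigma}-1}$, which is the desired bound after taking the supremum over $g$ with $\|g\|_{H^{\sigma' d}(\Gamma)}\le 1$. Since every ingredient is already in place, there is no real obstacle here; the only thing to keep in mind is that the assumption $\sigma d<\sigma' d<\tfrac12$ is exactly what ensures both $A$-norms are equivalent to their Sobolev counterparts and that Lemma \ref{lem-pngg} applies.
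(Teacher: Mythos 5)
Your proposal is correct and follows essentially the same route as the paper's proof: factor $\D\circ P_N - \D = \D(\,\cdot\,)(P_N-\Id)$, use the boundedness of $\D: H^{\sigma d}(\Gamma)\to H^{-\sigma d}(\Gamma)$, and apply Lemma~\ref{lem-pngg}. The only (cosmetic) difference is that the paper simply decrees $\|\cdot\|_{H^r}:=\|\cdot\|_{A^r}$ instead of carrying explicit equivalence constants $C_1$, $C_2$ as you do.
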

\begin{proof}
Let us use the norm $\|\cdot\|_{H^r}:=\|\cdot\|_{A^r}$, then
Lemma \ref{lem-pngg} yields for any $g\in H^{\sigma'd}(\Gamma)$:
	\begin{align*}
		\|\D\circ P_N g-\D g \|_{H^{-\sigma d}}&=\|\D (P_N g-g)\|_{H^{-\sigma d}}
		 \leq \|\D\|_{H^{\sigma d} \to H^{-\sigma d}} \|P_N g - g \|_{H^{\sigma d}}\\ &\leq \dfrac{p^{2\sigma}}{p^{2\sigma}-1}\|\D\|_{H^{\sigma d} \to H^{-\sigma d}} p^{-N(\sigma'-\sigma)}\|g\|_{H^{\sigma'd}}. \qedhere
	\end{align*}
\end{proof}

In order to work efficiently with the ``truncated map'' $\D\circ P_N$ we have to make an additional assumption on the structure of the tree $\cT$. Recall that the subtrees $\T^j_{N,k}$ and trees $\mathbb{T}^p$ were defined in Subsection \ref{sec21}.

\begin{defn} \label{regularity}
The metric tree $\T$ is called \emph{geometric from the generation $N_1$} (with some $N_1\in\N_0$) if for any
	$k\in\{0,\dots,p^{N_1}-1\}$ and any $j\in\{0,\dots,p-1\}$ it holds
	\[
	\T_{N_1,k}^j = \TT^p( \ell_{N_1+1,pk+j}, \ell,\omega_{N_1+1,pk+j}, \omega).  
	\]
In this case for any $N\ge N_1$ we denote by $\Tilde \cT_{N+1}$ the finite metric tree which is obtained from the truncated tree $\cT_{N+1}$ in the following way:
\begin{itemize}
	\item the combinatorial structure remains unchanged,
	\item the lengths of the pending (leaf) edges $e_{N+1,pk+j}$ becomes 
	\[
	{\Tilde \ell}_{N+1,pk+j}:=\dfrac{\ell_{N+1,pk+j}}{1-\dfrac{\ell}{\omega p}},
	\]
	and these new edges will be denoted by $\Tilde e_{N+1,pk+j}$,
	\item the lengths of all other edges and the weights of all edges remain unchanged.
\end{itemize}
The tree $\Tilde\cT_{N+1}$ will be called the $(N+1)$-condensation of $\cT$.	
\end{defn} 

Recall that Definition~\ref{defn2} contains a parameter $N_0\in \N_0$ related to the chosen decom\-position $(\Gamma_{n,k})$  of $\Gamma$, as well as associated local charts $\Phi_{N_0,K_0}$ covering $\overline{\Gamma_{N_0,K_0}}$ and the induced open sets $\Tilde \Gamma_{N_0,K_0}:=\Phi^{-1}_{N_0,K_0}(\Gamma_{N_0,K_0})\subset \R^d$ for $K_0\in\{0,\dots,p^{N_0}-1\}$.
We consider the respective Jacobians
\[
J_{N_0,K_0}:\ \Tilde\Gamma_{N_0,K_0}\to\R, \quad
J_{N_0,K_0}(y):=\sqrt{\det \big(\big(D\Phi_{N_0,K_0}(y)\big)^TD\Phi_{N_0,K_0}(y)\big)},
\]
which are smooth functions, bounded and separated from zero due to the above assumptions,
and their push-forwards
\[
H_{N_0,K_0}:=J_{N_0,K_0}\circ \Phi_{N_0,K_0}^{-1}:\  \Gamma_{N_0,K_0}\to\R.
\]
\begin{thm}\label{thm221}
Let the metric tree $\cT$ be geometric from some generation $N_1$, and for any $N\ge N_1$
let
\[
\Tilde \D_{N+1}:\ V_{N+1}(\Gamma)\to V_{N+1}(\Gamma)
\]
be the Dirichlet-to-Neumann map	for its $(N+1)$-condensation $\Tilde\cT_{N+1}$ constructed as in Definition~\ref{regularity}. Then
\begin{gather}
\D  P_{N+1}=H_{N_0}\Tilde \D_{N+1} P_{N+1} \text{ for any $N\ge\max\{N_0,N_1\}$}\nonumber\\
\text{with }H_{N_0}:=\dfrac{1}{p^{N_0-N-1}}\sum_{K_0=0}^{p^{N_0}-1} \dfrac{\one_{\Gamma_{N_0,K_0}}}{|\Tilde\Gamma_{N_0,K_0}|H_{N_0,K_0}}.\label{hn0}
\end{gather}
\end{thm}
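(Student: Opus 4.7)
The plan is to exploit the explicit harmonic extension on geometric subtrees to reduce the problem on the infinite tree to the condensed finite tree, with the weight $H_{N_0}$ emerging from an asymptotic analysis of the surface-measure normalizations in \eqref{fnderiv}. First I would reduce to $g \in V_{N+1}(\Gamma)$, writing $g = \sum_K g_K \one_{\Gamma_{N+1,K}}$, and set $u := P_\mathcal{T} g$. Since $N \geq N_1$, each subtree $\mathcal{T}^j_{N, k}$ coincides with the geometric tree $\TT^p(\ell_{N+1, pk+j}, \ell, \omega_{N+1, pk+j}, \omega)$. The Dirichlet solution on such a subtree with root value $A_k^N := u(X_{N, k})$ and constant trace $g_{pk+j}$ is radially symmetric by uniqueness and the child-permutation symmetry, reducing the Kirchhoff conditions to the geometric recursion $v_{n+1} - v_n = (v_n - v_{n-1})/q$ with $q := p\omega/\ell > 1$ (which exceeds $1$ by \eqref{eq-lpa}). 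Summing the geometric series and matching the limit with $g_{pk+j}$ yields both the closed-form value $u(X_{N+1, pk+j}) = q^{-1} A_k^N + (1 - q^{-1}) g_{pk+j}$ and the flux
\[
F_{pk+j} := \omega_{N+1, pk+j}\, u'_{N+1, pk+j}(L^+_{N, k}) = \bigl(1 - \tfrac{\ell}{\omega p}\bigr) \tfrac{\omega_{N+1, pk+j}}{\ell_{N+1, pk+j}} (g_{pk+j} - A_k^N).
\]

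The crucial observation is that this flux is identical to the one carried by the lengthened edge $\widetilde{e}_{N+1, pk+j}$ of the condensed tree (of length $\widetilde{\ell}_{N+1, pk+j} = \ell_{N+1, pk+j}/(1 - \ell/(\omega p))$) with the linear interpolation from $A_k^N$ to $g_{pk+j}$. Therefore $u|_{\mathcal{T}_N}$ and $\widetilde{u}|_{\mathcal{T}_N}$ (with $\widetilde{u}$ the Dirichlet solution on $\widetilde{\mathcal{T}}_{N+1}$ with data $g$) satisfy the same piecewise-linear Kirchhoff system on $\mathcal{T}_N$ supplemented by the same flux relations at the generation-$N$ vertices; by uniqueness they coincide, so $\widetilde{u}(X_{N, k}) = A_k^N$ for every $k$ and the same $F_K$ appears as the flux on each leaf edge of $\widetilde{\mathcal{T}}_{N+1}$. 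Formula \eqref{fnderiv} applied to $\widetilde{u}$ then delivers $\widetilde{\mathcal{D}}_{N+1} g$ in closed form in terms of the $F_K$.

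It remains to identify $\mathcal{D} g = \gamma_1^\mathcal{T} u$. By Lemma~\ref{lem-weak}, $\mathcal{D} g$ is the weak $H^{-\sigma d}$-limit of $\gamma_1^{\mathcal{T}_{N'}} u$ as $N' \to \infty$. The self-similarity of the geometric subtrees implies that on a leaf edge of $\mathcal{T}_{N'}$ at relative generation $N' - N - 1$ inside $\mathcal{T}^j_{N, k}$ the flux equals $p^{-(N'-N-1)}$ times $F_K$, so
\[
\gamma_1^{\mathcal{T}_{N'}} u = \sum_{K=0}^{p^{N+1}-1} F_K \sum_{K'\text{ descending from } K} \frac{p^{-(N'-N-1)}}{|\Gamma_{N', K'}|}\, \one_{\Gamma_{N', K'}}.
\]
For $N + 1 \geq N_0$ each $\Gamma_{N+1, K}$ lies inside a single chart image $\Gamma_{N_0, K_0}$; writing $|\Gamma_{N', K'}| = \int_{\widetilde{\Gamma}_{N', K'}} J_{N_0, K_0}\, dy$ and using the Euclidean scaling $|\widetilde{\Gamma}_{N', K'}| = |\widetilde{\Gamma}_{N_0, K_0}|\, p^{-(N'-N_0)}$ together with the continuity of $J_{N_0, K_0}$, a dominated-convergence argument inside each chart shows that the inner sum converges to $H_{N_0}|_{\Gamma_{N+1, K}}$ as $N' \to \infty$. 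Combining with the closed-form expression for $\widetilde{\mathcal{D}}_{N+1} g$ then yields the claimed identity. The main obstacle is this final passage to the limit: the chart-level pointwise convergence of the measure ratios has to be promoted to weak $H^{-\sigma d}$ convergence against arbitrary $H^{\sigma d}(\Gamma)$ test functions, which requires careful control of the oscillating step functions $\sum_{K'} p^{-(N'-N-1)} \one_{\Gamma_{N', K'}}/|\Gamma_{N', K'}|$ via uniform bounds on $J_{N_0, K_0}$ across each $\widetilde{\Gamma}_{N_0, K_0}$.
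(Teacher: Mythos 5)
Your proposal follows essentially the same route as the paper's proof: radial self-similar harmonic extensions on each geometric subtree $\T^j_{N,K}$, the flux decay $p^{-(n-N-1)}F_K$ along generations, Lemma~\ref{lem-weak} to write $\gamma^\T_1$ as a weak limit of the finite-section fluxes, and the chart/Jacobian asymptotics that produce the factor $H_{N_0}$. The only structural difference is the direction of the key construction — the paper builds the infinite-tree solution $v$ explicitly from the condensed-tree solution $u$ and verifies $v=P_\T f$, whereas you start from $P_\T g$ and infer radiality via permutation uniqueness before matching it to the condensed solution; both are valid and computationally equivalent. Your closing worry is overstated: Lemma~\ref{lem-weak} already supplies the weak $H^{-\sigma d}$ convergence, so the limit only needs to be identified by computing the duality pairing against each fixed $g\in H^{\sigma d}(\Gamma)$, and since the Jacobians $J_{N_0,K_0}$ are smooth and bounded away from zero on the compact chart patches, the resulting dominated-convergence step is routine.
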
	

\begin{rmk}
Before delving into the proof of the above result, let us discuss why we work with condensed trees rather than truncated trees. This is particularly easy to understand for the case when $\T$ is a geometric tree for $p=1$, $\omega=1$, $L_0=\omega_0=1$. In that case the tree $\T$ can be seen as the interval $(0, L)$ with $L=(1-\ell)^{-1}$. The traces and normal derivatives reduce to scalars, and the Dirichlet-to-Neumann map is the simple multiplication operator: $\D=L^{-1}$. The truncated tree $\T_N$ is then identified with the interval $\big(0, (1-\ell^N)L\big)$, and the associated Dirichlet-to-Neumann map is then $\D_N=(1-\ell^N)^{-1}L^{-1}$. However, the condensed tree $\widetilde{\mathcal{T}}_{N+1}$ again becomes the interval $(0, L)$, and this ensures whose Dirichlet-to-Neumann map $\Tilde D_N$
coincides with $\D$. It is a generalization of this observation that allows to express $\D P_{N+1}$ via $\widetilde{\mathcal{D}}_{N+1}$ exactly as stated in the above result. 
\end{rmk}

\begin{proof}[\bf Proof of Theorem \ref{thm221}]
We first derive an explicit expression of $\Tilde{\mathcal{D}}_{N+1}$, and next compute explicitly $\mathcal{D}P_{N+1}$.  

Step 1: Computing $\Tilde{\mathcal{D}}_{N+1}$. 
Let $f\in V_{N+1}(\Gamma)$, then it is given by
\[
f=\sum_{K=0}^{p^{N+1}-1} f_{N+1,K} \one_{\Gamma_{N+1,K}},
\quad
f_{N+1,K}=\dfrac{1}{|\Gamma_{N+1,K}|}\int_{\Gamma_{N+1,K}}f_{N+1}\,\dd x.
\]	
Denote by $u\in \Tilde H^1(\Tilde \cT_{N+1})$ the uniquely defined solution of
\[
\Delta_{\Tilde \cT_{N+1}}u=0,\qquad 
\gamma^{\Tilde \cT_{N+1}}_0u=f.
\]
For $K\in\{0,\dots,p^N-1\}$ and $j\in\{0,\dots,p-1\}$ denote
\begin{align*}
b_{N,K}&:=u(X_{N,K}),&
c_{N+1,pK+j}&:=u'_{N+1,pK+j}(L_{N,K}^{+}).
\end{align*}
The harmonicity of $u$ on the modified edges $\Tilde e_{N+1,pK+j}$ implies its linearity,
\begin{align*}
u_{N+1,pK+j}(t)&=u_{N+1,pK+j}(L_{N,k})+c_{N+1,pK+j}(t-L_{N,K})\\
&\equiv b_{N,K}+c_{N+1,pK+j}(t-L_{N,K}) \text{ for all } t\in(L_{N,K},L_{N,K}+\Tilde \ell_{N+1,pK+j}),
\end{align*}
hence,
\begin{align}
	\nonumber
\gamma^{\Tilde\cT_{N+1}}_0u&=\sum_{K=0}^{p^N-1}\sum_{j=0}^{p-1} u_{N+1,pK+j}(L_{N,K}+\Tilde \ell_{N+1,pK+j})\one_{\Gamma_{N+1,pK+j}}\\
\label{eq:trace_u}
&=\sum_{K=0}^{p^N-1} \sum_{j=0}^{p-1} \Big( b_{N,K}+c_{N+1,pK+j} \Tilde \ell_{N+1,pK+j}\Big)\one_{\Gamma_{N+1,pK+j}}.
\end{align}
The boundary condition for $u$ shows that
\begin{equation}
	\label{bcf}
b_{N,K}+\sum_{j=0}^{p-1} c_{N+1,pK+j} \Tilde\ell_{N+1,pK+j}=f_{N+1,pK+j},\ K\in\{0,\dots,p^N-1\},\ j\in\{0,\dots,p-1\},
\end{equation}
and in addition we have, with the use of the explicit expression \eqref{fnderiv} for $\gamma_1^{\mathcal{T}_N}$,
\begin{equation}
	\label{tildedn}
\Tilde\D_{N+1}f\equiv
\gamma^{\Tilde \cT_{N+1}}_1 u=\sum_{K=0}^{p^N-1}\sum_{j=0}^{p-1} \dfrac{\omega_{N+1,pK+j} c_{N+1,pK+j}}{|\Gamma_{N+1,pK+j}|}\one_{\Gamma_{N+1,pK+j}}. 
\end{equation}

Step 2: Computing $\mathcal{D}P_{N+1}$. To compute the desired expression, we will construct an appropriate ansatz to the solution of the Dirichlet boundary problem on $\T$.
	
Step 2.1: Ansatz for the solution.
Let $v:\cT\to\C$ be defined as follows: we set $v:=u$ on $\cT_N$. Next, on $e_{N+1,pK+j}$, we define 
\[
v_{N+1,pK+j}(t):=b_{N,K}+c_{N+1,pK+j}(t-L_{N,K}) \text{ for all } j\in\{0,\dots,p-1\},
\]
which ensures that the transmission condition \eqref{kirchhoff} is fulfilled at each $X_{N,K}$,
and then extend it to a continuous function on the whole of $\cT$ such that $v$ is radial along each subtree $\cT^j_{N,K}$ (which means that for all $x\in\cT^j_{N,K}$ the value of $v(x)$ only depends on the distance between $X_{N,K}$ and $x$), linear on each edge $e_{n,k}\in \cT^j_{N,K}$ and the transmission conditions \eqref{kirchhoff} are satisfied at all nodes $X_{n,k}\in \cT^j_{N,K}$. Note that such an extension is unique and is given by the following expressions:
\begin{align*}
v=u^0+u^f,\quad \text{ where } u^0_{n,k}&=b_{N,K}r^{n-N-1}\Big(1+(r-1)\frac{t-L_{n,k}+\ell_{n,k}}{\ell_{n,k}}\Big), \\
\text{and }
u^f_{n,k}&=\alpha_{N,K,j}\left(\sum\limits_{s=0}^{n-N-2}r^s+r^{n-N-1}\frac{t-L_{n,k}+\ell_{n,k}}{\ell_{n,k}}\right), \\ \text{ with }\quad r&=\frac{\ell}{p\omega},\quad \alpha_{N,K,j}= c_{N+1,pK+j}\ell_{N+1,pK+j}-b_{N,K}(r-1).
\end{align*}
By \cite[Theorem 4.4]{jks}, it follows that $u^0$ is a harmonic function with $\gamma^{\T}_0u^0=0$ which equals to $b_{N,K}$ in $X_{N,K}$. At the same time, $u^f$ is a harmonic function with $\gamma^{\T}_0u^f=f$ that vanishes in $X_{N,K}$ (and hence the notation). Let us prove that indeed this decomposition defines a function in ${H}^1(\cT_{N,K}^j)$.  

Step 2.2: Let us show that $v\in \widetilde{H}^1(\cT)$.
As $r=\frac{\ell}{p\omega}<1$ by assumption \eqref{eq-lpa}, we conclude that 
\begin{align}
	\label{eq:boundedness}
	\begin{split}
	&\|u^0_{n,k}\|_{L^{\infty}(\mathcal{T}^j_{N,K})}\leq |b_{N,K}|(1+(r-1)), \\ 	&\|u^f_{n,k}\|_{L^{\infty}(\mathcal{T}^j_{N,K})}\leq |\alpha_{N,K,j}|\left((1-r)^{-1}+1\right).
	\end{split}
\end{align}
Using the assumptions \eqref{eq-lpa} and \eqref{eq-ccc} we see that constant functions are in $L^2(\mathcal{T})$: 
\begin{align*}
	\int_{\cT}\dd{\mu}=\sum\limits_{n=0}^{\infty}\sum\limits_{k=0}^{p^n-1}\ell_{n,k}\omega_{n,k}\leq C^2\sum\limits_{n=0}^{\infty}\sum\limits_{k=0}^{p^n-1}\ell^{n}\omega_{n}\leq  \sum\limits_{n=0}^{\infty}C^2(\ell\omega p)^n<\infty.
\end{align*}
Therefore, the above together with \eqref{eq:boundedness} yields that $v\in L^2(\cT)$. Next, let us remark that for $e_{n,k}\in \cT^{j}_{N,k}$,  
\begin{align}
	\nonumber
	v'_{n,k}&=(b_{N,K}(r-1)+\alpha_{N,K,j})r^{n-N-1}\ell_{n,k}^{-1}\\
	\nonumber
	&=(b_{N,K}(r-1)+\alpha_{N,K,j})\ell_{N+1,pK+j}^{-1}r^{n-N-1}\ell^{N-n+1}\\
	\label{eq:vprime_nk}
	&=c_{N+1,pk+j}r^{n-N-1}\ell^{N-n+1}=c_{N+1,pk+j}(p\omega)^{-n+N+1}.
\end{align} 
A direct computation yields
\begin{align*}
	\int_{\cT^j_{N,K}}|v'|^2\dd\mu&=|c_{N+1,pk+j}|^2\sum\limits_{n=N+1}^{\infty}\sum\limits_{k=0}^{p^n-1}\ell_{n,k}\omega_{n,k}(p\omega)^{2(N+1-n)}\\
	&=|c_{N+1,pk+j}|^2\ell_{N+1,pK+j}\omega_{N+1,pK+j}\sum\limits_{n=N+1}^{\infty}p^n\ell^{n-N-1}\omega^{n-N-1}(p\omega)^{2(N+1-n)}\\
	&\leq C_{N,K,j}\sum\limits_{n=N+1}^{\infty}\ell^n(p\omega)^{-n}=C_{N,K,j}\sum\limits_{n=N+1}^{\infty}r^n<\infty \text{ (due to $|r|<1$).} 
\end{align*}
Recall that $\Delta_{\mathcal{T}}v=0$ by construction, and it follows that $v\in \widetilde{H}^1_\Delta(\cT)$. 

Step 2.3: Let us show that $\gamma_0^{\T}v=f$. First of all, we remark that for all $n\geq N+1$
with $e_{n,k}\in \mathcal{T}_{N,K}^j$ we have
\begin{align}
	\label{eq:vval}
	\begin{split}
	v(X_{n,k})&=v_{n,k}(L_{n,k})=b_{N,K}r^{n-N-1}+\alpha_{N,K,j}\sum\limits_{s=0}^{n-N-2}r^s\\
	&=r^n\Big(b_{N,K}r^{-N-1}-r^{-N-1}(1-r)^{-1}\alpha_{N,K,j}\Big)+\alpha_{N,K,j}(1-r)^{-1},
	\end{split}
\end{align}
and the right-hand part does not depend on $k$ due to the radiality of $v$ on $\cT^j_{N,K}$.
Therefore, for $n\geq N+1$,
\begin{align*}
	\gamma_0^{\mathcal{T}_n}v&=\sum\limits_{k=0}^{p^n-1}v(X_{n,k}){\one}_{\Gamma_{n,k}}=\sum\limits_{k=0}^{p^n-1}\varphi_{n,N,K,j}{\one}_{\Gamma_{n,k}}
	=\sum\limits_{K=0}^{p^N-1}\sum\limits_{j=0}^{p-1}\varphi_{n,N,K,j}{\one}_{\Gamma_{N+1,pK+j}}.
\end{align*}
Next, we take $\lim\limits_{n\rightarrow +\infty}$ of the above; with \eqref{eq:vval} and $r<1$, as well as using the definition of $\alpha_{N,K,j}$ we conclude that 
\begin{align*}
	\gamma_0^{\mathcal{T}}v&=\sum\limits_{K=0}^{p^N-1}\sum\limits_{j=0}^{p-1}\left(b_{N,K}+c_{N+1,pK+j}\frac{\ell_{N+1,pK+j}}{1-r}\right){\one}_{\Gamma_{N+1,pK+j}}\\
	&=\sum\limits_{K=0}^{p^N-1}\sum\limits_{j=0}^{p-1}\left(b_{N,K}+c_{N+1,pK+j}\Tilde\ell_{N+1,pK+j}\right){\one}_{\Gamma_{N+1,pK+j}}=\gamma_0^{\Tilde{\cT}_{N+1}}u=f. 
\end{align*}
as per \eqref{eq:trace_u} and \eqref{bcf}.

Step 2.4: Relating $\gamma_1^{\T}v$ to $\gamma_1^{\widetilde{\T}_{N+1}}v$. 
We will compute $\gamma^\cT_1 v$ using Lemma \ref{lem-weak} and an explicit expression of the co-normal derivative \eqref{fnderiv}. Let $g\in H^{\sigma d}(\Gamma)$, then
\begin{align}
	(\gamma^{\cT}_1 v,g)_{H^{-\sigma d}(\Gamma),H^{\sigma d}(\Gamma)}&=\lim_{n\to\infty}
	\int_{\Gamma} \sum_{k=0}^{p^n-1} \dfrac{\omega_{n,k}}{|\Gamma_{n,k}|}v'_{n,k}(L_{n,k}^-)\one_{\Gamma_{n,k}} g\,\dd s\nonumber\\
	&=\lim_{n\to\infty}\sum_{K=0}^{p^N-1}\sum_{j=0}^{p-1} \int_\Gamma F_n(N,K,j) g\,\dd s,\label{sumfn}\\
	F_n(K,j)&:=\sum_{k:\, \cT_{n,k}\subset\cT^j_{N,K}} \dfrac{\omega_{n,k}}{|\Gamma_{n,k}|}v'_{n,k}(L_{n,k}^-)\one_{\Gamma_{n,k}} \text{ for }n\ge N+1.\nonumber
\end{align}
By \eqref{eq:vprime_nk} and the assumption on $\cT^j_{N,K}$ we have
\begin{align*}
	F_n(K,j)&=\sum_{k:\, \cT_{n,k}\subset\cT^j_{N,K}} \dfrac{\omega_{N+1,pK+j}\omega^{n-N-1}}{|\Gamma_{n,k}|}\dfrac{c_{N+1,pK+j}}{(p\omega)^{n-N-1}}\one_{\Gamma_{n,k}}\\
	&=\omega_{N+1,pK+j}c_{N+1,pK+j}\sum_{k:\, \cT_{n,k}\subset\cT^j_{N,K}} \dfrac{1}{p^{n-N-1}|\Gamma_{n,k}|}\one_{\Gamma_{n,k}},
\end{align*}
and the substitution into \eqref{sumfn} gives
\begin{equation}
	\label{ggtt11}
	\begin{aligned}
		(\gamma^{\cT}_1 v,g)_{H^{-\sigma d}(\Gamma),H^{\sigma d}(\Gamma)}&=
		\sum_{K=0}^{p^N-1}\sum_{j=0}^{p-1} \omega_{N+1,pK+j}c_{N+1,pK+j}\lim_{n\to\infty}G_n(K,j),\\
		G_n(K,j):&=\sum_{k:\, \cT_{n,k}\subset\cT^j_{N,K}} \int_\Gamma\dfrac{1}{p^{n-N-1}|\Gamma_{n,k}|}\one_{\Gamma_{n,k}} g\,\dd s.
	\end{aligned}
\end{equation}

For the computation of the limits we are going to use the local charts $\Phi_{N_0,K_0}$.
Let $K_0\in\{0,\dots,p^{N_0}-1\}$ be such that $\cT^j_{N,K}\subset \cT_{N_0,K_0}$, then for 
$\cT_{n,k}\subset\cT^j_{N,K}$ one has
\begin{equation}
	\begin{aligned}
		\int_\Gamma\dfrac{1}{p^{n-N-1}|\Gamma_{n,k}|}\one_{\Gamma_{n,k}} g\,\dd s&=\int_{\Gamma_{n,k}} \dfrac{1}{p^{n-N-1}|\Gamma_{n,k}|} g\,\dd s\\
		&=\int_{\Tilde\Gamma_{n,k}}\dfrac{1}{p^{n-N-1}|\Gamma_{n,k}|}g\big(\Phi_{N_0,K_0}(y)\big) J_{N_0,K_0}(y)\,\dd y.
	\end{aligned}
	\label{jnk1}
\end{equation}
Choose arbitrary points $a_{n,k}\in \Tilde \Gamma_{n,k}$ and denote $j_{n,k}:=J_{N_0,K_0}(a_{n,k})$. As $J_{N_0,K_0}$ is smooth, one has $|J_{N_0,K_0}(y)-j_{n,k}|\le b|y-a_{n,k}|$ with some fixed $b>0$ uniformly in $(y,n,k)$. Remark that by assumption the diameter of $\Tilde \Gamma_{n,k}$ is $O(p^{-\frac{n}{d}})$ for large $n$, which shows that $J_{N_0,K_0}(y)=j_{n,k}+O(p^{-\frac{n}{d}})$ uniformly for $y\in \Tilde \Gamma_{n,k}$ as $n$ becomes large. Similarly, 
\begin{align*}
	|\Gamma_{n,k}|&=\int_{\Tilde \Gamma_{n,k}} J_{N_0,K_0}(y)\,\dd y
	=\int_{\Tilde \Gamma_{n,k}} j_{n,k}\,\dd y+\int_{\Tilde \Gamma_{n,k}} \big(J_{N_0,K_0}(y)-j_{n,k}\big)\,\dd y\\
	&=\Big(j_{n,k}+O(p^{-\frac{n}{d}})\Big)|\Tilde \Gamma_{n,k}|
	=\dfrac{j_{n,k}|\Tilde \Gamma_{n,k}|}{1+O(p^{-\frac{n}{d}})}.
\end{align*}
The substitution of these asymptotic estimates into \eqref{jnk1} yields
\begin{align*}
	\int_\Gamma\dfrac{1}{p^{n-N-1}|\Gamma_{n,k}|}\one_{\Gamma_{n,k}} g\,\dd s&=
	\int_{\Tilde\Gamma_{n,k}}\dfrac{1}{p^{n-N-1}|\Tilde\Gamma_{n,k}|}g\big(\Phi_{N_0,K_0}(y)\big) \,
	\big(1+O(p^{-\frac{n}{d}}\big)\big)
	\dd y\\
	&=\dfrac{1}{p^{N_0-N-1}|\Tilde \Gamma_{N_0,K_0}|}\int_{\Tilde\Gamma_{n,k}}g\big(\Phi_{N_0,K_0}(y)\big) \,
	\big(1+O(p^{-\frac{n}{d}}\big)\big)
	\dd y
\end{align*}
where the $O$-term is uniform in $y\in \Gamma_{N_0,K_0}$ and we have used  $|\Tilde\Gamma_{n,k}|=p^{N_0-n}|\Tilde \Gamma_{N_0,K_0}|$. One has then
\begin{align*}
	\lim_{n\to\infty} G_n(K,j)&=\lim_{n\to\infty}\sum_{k:\, \cT_{n,k}\subset\cT^j_{N,K}}
	\dfrac{1}{p^{N_0-N-1}|\Tilde \Gamma_{N_0,K_0}|}\int_{\Tilde\Gamma_{n,k}}g\big(\Phi_{N_0,K_0}(y)\big) \,
	\big(1+O(p^{-\frac{n}{d}}\big)\big)
	\dd y\\
	&=\lim_{n\to\infty}\dfrac{1}{p^{N_0-N-1}|\Tilde \Gamma_{N_0,K_0}|}\int_{\Tilde \Gamma_{N+1,pK+j}}g\big(\Phi_{N_0,K_0}(y)\big) \,
	\big(1+O(p^{-\frac{n}{d}}\big)\big)
	\dd y\\
	&=\dfrac{1}{p^{N_0-N-1}|\Tilde \Gamma_{N_0,K_0}|}\int_{\Tilde \Gamma_{N+1,pK+j}}g\big(\Phi_{N_0,K_0}(y)\big) \, \dd y\\
	&=\dfrac{1}{p^{N_0-N-1}|\Tilde \Gamma_{N_0,K_0}|} \int_{\Gamma_{N+1,pK+j}} \dfrac{g}{H_{N_0,K_0}}\,\dd s.
\end{align*}
Recall that this result was obtained under the assumption $\cT_{N,K}\subset\cT_{N_0,K_0}$.

We now regroup the summands in \eqref{ggtt11}:
\begin{align*}
	(\gamma^{\cT}_1 v,&g)_{H^{-\sigma d}(\Gamma),H^{\sigma d}(\Gamma)}=
	\sum_{K_0=0}^{p^{N_0}-1} \sum_{(K,j):\,\cT^j_{N,K}\subset\cT_{N_0,K_0}}
	\omega_{N+1,pk+j}c_{N+1,pK+j}\lim_{n\to\infty}G_n(K,j)\\
	&=\sum_{K_0=0}^{p^{N_0}-1} \sum_{(K,j):\,\cT^j_{N,K}\subset\cT_{N_0,K_0}}
	\dfrac{\omega_{N+1,pk+j}c_{N+1,pK+j}}{p^{N_0-N-1}|\Tilde \Gamma_{N_0,K_0}|} \int_{\Gamma_{N+1,pK+j}} \dfrac{g}{H_{N_0,K_0}}\,\dd s\\
	&=\int_\Gamma \bigg(\dfrac{1}{p^{N_0-N-1}}\sum_{K_0=0}^{p^{N_0}-1} \dfrac{\one_{\Gamma_{N_0,K_0}}}{|\Tilde\Gamma_{N_0,K_0}|H_{N_0,K_0}}\bigg)	\\
	&\qquad\times
	\bigg(\sum_{K=0}^{p^N-1}\sum_{j=0}^{p-1} \dfrac{\omega_{N+1,pK+j} c_{N+1,pK+j}}{|\Gamma_{N+1,pK+j}|}\one_{\Gamma_{N+1,pK+j}}\bigg)\, g\,\dd s\\
	&=\int_{\Gamma} H_{N_0} \gamma^{\Tilde \cT_{N+1}}_1 u\, g\,\dd s,
\end{align*}
where \eqref{hn0} and \eqref{tildedn} were used in the last step,
and we arrive at
\[
\D f=\gamma^\cT_1 v=H_{N_0} \gamma^{\Tilde \cT_{N+1}}_1 u=H_{N_0}\Tilde \D_{N+1} f. \qedhere
\]
\end{proof}

\section{Boundary value problems on the exterior domain}\label{sec3-bvp}
Recall that in the context of boundary value problems for an open set $U\subset\R^m$ one usually denotes by $H^1_\loc(U)$ the set of the functions $f$ on $U$ such that $\varphi f\in H^1(\Omega)$ for any cut-off function $\varphi\in C^\infty_c(\R^m)$. In particular, $H^1_\loc(U)=H^1(U)$ for all bounded $U$.

For the rest of the section it will be convenient to denote
\[
\Omega_+:=\Omega,\qquad \Omega_-:=\R^m\setminus\overline{\Omega},
\]
and let $\nu$ be the unit normal on $\Gamma$ pointing to $\Omega_+$.
We then have the respective Dirichlet trace maps
\[
\gamma^+_0\equiv \gamma^\Omega_0: \ H^1_\loc(\Omega_+)\to H^\half(\Gamma),
\qquad
\gamma^-_0: \ H^1(\Omega_-)\to H^\half(\Gamma),
\]
so that for the functions $u$ smooth up to a boundary one has $\gamma^\pm_0 u:=u|_{\Gamma}$,
and the Neumann trace maps
\begin{align*}
	\gamma^+_1\equiv\gamma^\Omega_1:& \ \big\{u\in H^1_{\loc}(\Omega_+):\ \Delta u \in L^2_\loc(\Omega_+)\big\}\to H^{-\half}(\Gamma),\\
	\qquad
	\gamma^-_1:& \ \big\{u\in H^1(\Omega_-):\ \Delta u\in L^2(\Omega_-)\big\} \to H^{-\half}(\Gamma),
\end{align*}
such that for the functions $u$ smooth up to the boundary one has
\[
\gamma^\pm_1 u:=\partial_\nu u|_\Gamma,
\]
where $\nu$ is the unit normal on $\Gamma$ pointing to $\Omega\equiv \Omega_+$. Remark that with this sign convention, one has $u\in H^1(\R^m)$ if and only if $u\in H^1(\R^m\setminus\Gamma)$ with $(\gamma_0^+-\gamma_0^-)u=0$. In addition, $u\in H^1(\R^m)$ with $\Delta u\in L^2(\R^m)$
if and only if $u\in H^1(\R^m\setminus\Gamma)$ with $\Delta u\in L^2(\R^m\setminus\Gamma)$ (in the sense of distributions on $\R^m\setminus\Gamma$) with $(\gamma_0^+-\gamma_0^-)u=0$ and $(\gamma_1^+ -\gamma_1^-)u=0$.

The aim of the present section is to give detailed proofs of the following results:
\begin{thm}[Exterior Dirichlet problem]\label{thm21}
	For any $f\in L^2_\comp(\Omega)$ and $g\in H^\half(\Gamma)$ there is a unique solution $u\in H^1_\loc(\Omega)$
	of
	\begin{equation}
		\label{dir-ext}
		\left\{\begin{aligned}
			- \Delta u &=f  \text{ in }\Omega,\\
			\gamma^\Omega_0 u &=g  \text{ on }\Gamma,\\
			u(x)&= 	O(|x|^{2-m}) \text{ for } |x|\to \infty,
		\end{aligned}\right.
	\end{equation}
and the solution $u$ depends continuously on the right-hand sides $f$ and $g$ in the following sense: for any $\varphi\in C^\infty_c(\R^m)$ there is a constant $c>0$ such that for any $(f,g)$ one has
\[
\|\varphi u\|_{H^1(\Omega)}\le c\big(\|f\|_{L^2(\Omega)}+\|g\|_{H^\half(\Gamma)}\big).
\]

\end{thm}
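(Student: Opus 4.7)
The plan is to establish uniqueness via a Green-identity argument at infinity, then build the solution by splitting $u = v + w$, where $v$ is a Newton potential handling the source $f$ and $w$ solves an auxiliary homogeneous Dirichlet problem, and finally to derive the local continuous-dependence estimate by propagating bounds through this decomposition.

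\textbf{Uniqueness.} Suppose $u\in H^1_\loc(\Omega)$ is harmonic with $\gamma_0^\Omega u = 0$ and $u(x) = O(|x|^{2-m})$; interior elliptic regularity yields $u \in C^\infty(\Omega)$. For $m \geq 3$, Green's first identity on $\Omega \cap B_R$ gives
\[
\int_{\Omega \cap B_R} |\nabla u|^2 \, dx = \int_{\partial B_R} u\,\partial_r u \, dS,
\]
the $\Gamma$-contribution vanishing by the zero trace. Combined with the standard interior gradient estimate $|\nabla u(x)| = O(|x|^{1-m})$ for harmonic functions, the right-hand side is $O(R^{2-m})$, forcing $\nabla u \equiv 0$ and hence $u \equiv 0$. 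The case $m=2$, where the decay reduces to mere boundedness, is handled by a Kelvin transform at infinity (producing a removable singularity for the transformed function) followed by the maximum principle on large annuli.

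\textbf{Existence.} Extend $f$ by zero outside $\Omega$ and define $v := \Phi * f$, with $\Phi$ the fundamental solution of $-\Delta$. Standard theory gives $v \in H^2_\loc(\R^m)$ with $-\Delta v = f$ and, for $m \geq 3$, the decay $v(x) = O(|x|^{2-m})$ (in $m=2$ one subtracts a suitable constant to match the required boundedness). Then $w := u - v$ must be harmonic in $\Omega$, satisfy $\gamma_0^\Omega w = \tilde{g} := g - \gamma_0^\Omega v \in H^{1/2}(\Gamma)$, and decay at the prescribed rate. Solve this exterior harmonic Dirichlet problem via a single-layer potential ansatz $w = \mathcal{S}\phi$, reducing to the boundary integral equation $V\phi = \tilde{g}$ on $\Gamma$; since $V: H^{-1/2}(\Gamma) \to H^{1/2}(\Gamma)$ is an isomorphism (after a cosmetic rescaling if $m = 2$ to avoid critical logarithmic capacity), the density $\phi$ exists and is unique, yielding $w$ and hence $u := v + w$.

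\textbf{Continuous dependence.} For a cutoff $\varphi \in C_c^\infty(\R^m)$, the estimate $\|\varphi v\|_{H^1(\Omega)} \leq c\|f\|_{L^2(\Omega)}$ is standard for Newton potentials of compactly supported sources, and implies $\|\gamma_0^\Omega v\|_{H^{1/2}(\Gamma)} \leq c\|f\|_{L^2(\Omega)}$ via the trace theorem. The bound $\|\varphi w\|_{H^1(\Omega)} \leq c'\|\tilde{g}\|_{H^{1/2}(\Gamma)}$ follows from boundedness of $V^{-1}$ combined with the mapping property of the single-layer potential into $H^1_\loc(\R^m \setminus \Gamma)$. Assembling these bounds gives the claim.

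\textbf{Main obstacle.} The principal technical subtlety is the two-dimensional case: the decay condition collapses to mere boundedness, the Newton potential carries a logarithmic tail that must be subtracted off, and the single-layer operator fails to be injective on curves of logarithmic capacity one. A unified alternative that bypasses these issues would be to work variationally in weighted (Beppo--Levi) Sobolev spaces and invoke Lax--Milgram after verifying coercivity via a Hardy-type inequality, giving a dimension-independent treatment at the cost of introducing an additional functional-analytic framework.
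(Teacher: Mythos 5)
Your argument for $m\ge 3$ is essentially the paper's: uniqueness via a Green identity at infinity (cf.\ Lemmas~\ref{uniq} and~\ref{uvskal}), and existence by reducing to a boundary integral equation solved using the bijectivity of the single-layer operator $S$ (Theorem~\ref{DTNdge2}). Your pure single-layer ansatz $u=\Phi*f+\SL\phi$ with $S\phi=\tilde g$ is a harmless variant of the representation-formula ansatz $u=\cG f+\DL g-\SL h$ that the paper inherits from Theorem~\ref{boundequ}; both yield the same $u$ because $S$ is invertible. The uniqueness sketches are also sound.

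There is, however, a genuine gap in the existence proof for $m=2$, precisely at the point you flag as the main obstacle. Subtracting a constant does \emph{not} remove the logarithmic behaviour of the Newton potential: for $f\in L^2_\comp$ one has
\[
(\Phi*f)(x)=-\frac{1}{2\pi}\log\frac{|x|}{r}\int f\,\dd x+O\bigl(|x|^{-1}\bigr),
\]
which is unbounded unless $\int f=0$. Likewise, even after fixing $r$ so that $S$ is injective, the density $\phi$ solving $S\phi=\tilde g$ gives
\[
(\SL\phi)(x)=-\frac{1}{2\pi}\log\frac{|x|}{r}\,(\phi,\one_\Gamma)_{H^{-\half},H^\half}+O\bigl(|x|^{-1}\bigr),
\]
and nothing in your construction forces $(\phi,\one_\Gamma)=-\int f$; hence $u=v+w$ will generically fail the $O(1)$ condition at infinity. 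The resolution used in the paper (Lemma~\ref{Sh+c=G} and Theorem~\ref{Dirchletd1}) is to enlarge the ansatz to $u=\cG f+\SL h+c$ with a free constant $c\in\C$ and to solve the coupled $(h,c)$-system $Sh+c=g-\gamma_0^+\cG f$, $(h,\one_\Gamma)=-\int f$, which is uniquely solvable; the side constraint is exactly what makes the two logarithmic tails cancel. Your proposal identifies the correct obstruction but never supplies this extended system (nor develops the weighted/Beppo--Levi alternative it mentions), so as written the $m=2$ existence is unproved.
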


\begin{thm}[Dirichlet-to-Neumann map for the exterior domain]\label{thm22}
	For $g\in H^\half(\Gamma)$ let $u_g$ denote the solution of \eqref{dir-ext} with $f=0$.	 Define the Dirichlet-to-Neumann operator 
	\[
	\dtn: \ H^{\half}(\Gamma)\ni g\mapsto \gamma^\Omega_1 u_g\in H^{-\half}(\Gamma),
	\]
	then $\dtn$ is bounded, Fredholm of index zero, and non-positive, i.e. 
	\[
	-\big(\dtn g, \overline{g}\big)_{H^{-\half}(\Gamma), H^\half(\Gamma)} \ge 0 \text{ for any } g\in H^{\half}(\Gamma).
	\]  	
	Moreover,	
	\begin{itemize}
		\item[(i)] if $m\ge 3$, then $\dtn$ is coercive, i.e. one can find a constant $c>0$ such that
		\begin{equation}
			\label{dtngg}
			-\big(\dtn g, \overline{g}\big)_{H^{-\half}(\Gamma), H^\half(\Gamma)} \ge c\|g\|_{H^\half(\Gamma)}^2
		\end{equation}
		holds for all $g\in H^\half(\Gamma)$, in particular, $\dtn$ is bijective,
		\item[(iii)] if $m=2$, then $\ker \dtn=\C \one_\Gamma$, and $\dtn$ is coercive on
		\[
		H^\half_0(\Gamma):=\big\{ g\in H^\half(\Gamma):\, (\one_\Gamma,g)_{H^{-\half}(\Gamma),H^\half(\Gamma)}=0\big\},
		\]
		i.e. there is a constant $c>0$ such that \eqref{dtngg} is fulfilled for all $ g\in H^{\half}_0(\Gamma)$.
	\end{itemize}
\end{thm}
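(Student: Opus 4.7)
The plan is to derive everything from the Green-type identity
\[
-(\dtn g,\overline{h})_{H^{-\half}(\Gamma),H^\half(\Gamma)}=\int_\Omega \nabla u_g\cdot\overline{\nabla u_h}\,\dd x,\qquad g,h\in H^\half(\Gamma).
\]
I would obtain it by applying the classical Green formula on the bounded set $\Omega\cap B_R$ (whose outer normal on $\Gamma$ is $-\nu$) and letting $R\to\infty$: interior elliptic regularity for harmonic functions, combined with the decay $u_g=O(|x|^{2-m})$, furnishes $|\nabla u_g|=O(|x|^{1-m})$, so the boundary contribution on $\partial B_R$ is of size $O(R^{2-m})$ and vanishes for $m\ge 3$. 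Boundedness of $\dtn$ then follows directly from Theorem~\ref{thm21} (applied with a cut-off $\varphi$ equal to $1$ near $\Gamma$ and supported in a fixed ball) combined with the continuity of $\gamma_1^\Omega$ on $\{u\in H^1(\Omega\cap B_R):\Delta u\in L^2\}$; setting $h=g$ in the identity then yields $-(\dtn g,\overline g)=\|\nabla u_g\|^2_{L^2(\Omega)}\ge 0$, i.e.\ non-positivity.

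For the case $m\ge 3$, coercivity amounts to the trace-type estimate $\|g\|_{H^\half(\Gamma)}\le c\,\|\nabla u_g\|_{L^2(\Omega)}$. I would establish it by working in the Beppo--Levi space $\dot H^1(\Omega):=\{u\in L^{2m/(m-2)}(\Omega):\nabla u\in L^2(\Omega)\}$, in which the Hardy--Sobolev inequality $\||x|^{-1}u\|_{L^2(\Omega)}\le c\,\|\nabla u\|_{L^2(\Omega)}$ (valid precisely for $m\ge 3$) makes $\|\nabla\cdot\|_{L^2(\Omega)}$ a norm and yields $\|u\|_{H^1(\Omega\cap B_{R_0})}\le C\|\nabla u\|_{L^2(\Omega)}$ on a fixed ball $B_{R_0}\supset\Gamma$; composing with the standard interior trace $H^1(\Omega\cap B_{R_0})\to H^\half(\Gamma)$ gives the desired bound. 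The Hermitian sesquilinear form $(g,h)\mapsto -(\dtn g,\overline h)$ is therefore coercive on $H^\half(\Gamma)$, so Lax--Milgram gives the bijectivity of $\dtn$, hence in particular the Fredholm index zero property.

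The $m=2$ case is more delicate because $u=O(1)$ alone does not kill the boundary integral at infinity. The key observation is that any bounded exterior-harmonic function in the plane has a finite limit $u_\infty$ at infinity, and the remainder $w:=u-u_\infty$ satisfies $w=O(|x|^{-1})$ and $\nabla w=O(|x|^{-2})$; this is most transparent after a conformal reduction to the exterior of a disk and a Fourier--Laurent expansion, where boundedness rules out the $\log$ and polynomial terms. An application of the divergence theorem to $w$ on $\Omega\cap B_R$ identifies the flux of $w$ through $\partial B_R$ with $(\dtn g,\one_\Gamma)_{H^{-\half}(\Gamma),H^\half(\Gamma)}$, and letting $R\to\infty$ the decay $\nabla w=O(|x|^{-2})$ forces this flux to vanish, so $\operatorname{ran}\dtn\subseteq H^{-\half}_0(\Gamma):=\{\eta:(\eta,\one_\Gamma)=0\}$ and the Green identity is restored verbatim. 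Non-positivity then follows as before; the inclusion $\mathbb{C}\one_\Gamma\subseteq\ker\dtn$ is obvious (since $u\equiv c$ is admissible for $g=c\one_\Gamma$), while $\dtn g=0$ forces $\|\nabla u_g\|^2=0$ and hence $g\in\mathbb{C}\one_\Gamma$. For coercivity on $H^\half_0(\Gamma)$ I would pass to the quotient $\dot H^1(\Omega)/\mathbb{C}$, on which $\|\nabla\cdot\|_{L^2(\Omega)}$ is a norm, combine the corresponding quotient trace inequality with a Poincar\'e inequality on $\Gamma$ for zero-mean functions, and obtain $\|g\|_{H^\half}\le c\|\nabla u_g\|_{L^2}$ for $g\in H^\half_0$; Lax--Milgram on $H^\half_0\times H^\half_0$ then yields an isomorphism $\dtn\colon H^\half_0(\Gamma)\to H^{-\half}_0(\Gamma)$, whence $\dim\ker\dtn=\dim\operatorname{coker}\dtn=1$ and $\dtn$ is Fredholm of index zero. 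The hardest step throughout will be this $m=2$ asymptotic analysis at infinity and the vanishing-flux identity, without which the symmetric Green identity would be lost.
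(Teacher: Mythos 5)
Your proposal uses the same Green-type identity that the paper records as Lemma~\ref{uvskal} (and, as you suspect, the paper also relies on the improved decay of the radial derivative $\partial_r u=O(|x|^{1-m})$ resp.\ $O(|x|^{-2})$ to kill the boundary term at infinity; the paper cites Folland, whereas you sketch the Laurent-expansion argument that gives the same estimate in dimension two). Boundedness and non-positivity are handled in essentially the same way. The route to coercivity, however, is genuinely different. You propose to prove the trace estimate $\|g\|_{H^{1/2}(\Gamma)}\le c\|\nabla u_g\|_{L^2(\Omega)}$ directly, via the Hardy inequality/Beppo--Levi (weighted Sobolev) framework for $m\ge 3$, and via the quotient by constants together with a Poincar\'e inequality for $m=2$; coercivity then gives bijectivity by Lax--Milgram. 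The paper goes the other way around: it first establishes bijectivity of $\dtn$ (for $m\ge3$) resp.\ the restricted map $\dtn_0$ on $H^{1/2}_0$ (for $m=2$) using the boundary-integral machinery of Theorems~\ref{boundequ} and~\ref{Neumannd1}, and then deduces coercivity by a short duality computation against an arbitrary $h\in H^{-1/2}(\Gamma)$ that hinges only on the bound $\|\dtn^{-1}\|$ (no Hardy/Poincar\'e on exterior domains is needed). Your version is conceptually more self-contained but carries a real debt: the Hardy inequality on an exterior domain without a boundary condition on $\Gamma$ is not a direct consequence of the whole-space case (the naive integration by parts produces a $\Gamma$-boundary term of indefinite sign unless $\Omega_-$ is star-shaped), and the same goes for the Poincar\'e inequality $\|u\|_{H^1(\Omega\cap B_{R_0})}\lesssim\|\nabla u\|_{L^2(\Omega)}$ on the weighted space. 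These are standard results in the theory of exterior problems (Amrouche--Girault--Giroire, Deny--Lions), but they require a citation or a careful proof; as written your argument leaves that step as an unproved assertion.

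Two small points about your $m=2$ treatment. First, the Green identity for $m=2$ does not ``depend'' on the vanishing-flux step; it holds directly once you know $\nabla(u-u_\infty)=O(|x|^{-2})$, exactly as in $m\ge3$. The vanishing of $(\dtn g,\one_\Gamma)$ is then just the Green identity applied with $v=\one$ (for which $\nabla v=0$); presenting it as an independent divergence-theorem computation is fine but gives the impression the Green identity would fail without it, which is not the case. Second, you identify $\ran\dtn\subseteq H^{-1/2}_0(\Gamma)$ by the flux argument; the paper instead obtains this inclusion as a corollary of the self-adjointness of $\dtn$ (range $\subset$ annihilator of kernel), and then obtains the reverse inclusion from solvability of the exterior Neumann problem under the compatibility condition. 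Both paths reach the same Fredholm statement ($\dim\ker=\operatorname{codim}\ran=1$), so this part of your proposal is correct, just routed through different lemmas than the paper.
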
	

While both results are indeed folkloric, we did not manage to find a suitable reference containing all necessary details for the required $H^1$-setting. McLean's book \cite{McLean} provides all important tools but lacks a precise formulation of final results for the problems in exterior domains, so we decided to complete the respective part of the argument.

In the next subsection we recall some constructions related to boundary integral operators.
These machineries are then used to prove Theorems \ref{thm21} and \ref{thm22}, first for $m\ge 3$ in Subsection \ref{sec-m3} and then for $m=2$ in Subsection~\ref{sec-m2}.

\subsection{Surface potentials}

We begin by citing some of the important statements from~\cite{McLean}. Let $G$ be the standard fundamental solution for the Laplace equation in $\R^m$,

\begin{equation}\label{fundameltalsolution}
	G(x) =\begin{cases} \dfrac{1}{2 \pi} \log \dfrac{r}{|x|},& m=2,\\[\bigskipamount]
		\dfrac{1}{(m-2)\Upsilon_m} \dfrac{1}{|x|^{m-2}}, & m\ge 3,
		\end{cases}
\end{equation}
where $\Upsilon_m$ is the hypersurface area of the unit sphere in $\R^m$ and $r>0$ is arbitrary (to be chosen later), and consider the convolution operator
\[
\cG: \ \cE'(\R^m)\ni f\mapsto G\star f \in \cS'(\R^m),
\]
which satisfies $-\Delta \circ \cG = -\cG \circ \Delta=\Id$. For $g\in H^{-\half}(\Gamma)$ define  $\gamma^*_0 g\in \cS'(\R^m)$ by
\[
(\gamma^*_0 g, \phi) = (g, \gamma^+_0 \phi)_{H^{-\half}(\Gamma), H^{\half}(\Gamma)}  \text{ for all } \phi\in\cS(\R^m).
\]
Analogously, for $g\in H^\half(\Gamma)$ we define the distribution $\gamma^*_1 g\in \cS'(\R^m)$ by
\[
(\gamma^*_1 g, \phi) = (g, \gamma^+_1 \phi)_{H^{\half}(\Gamma), H^{-\half}(\Gamma)} \text{ for all } \phi\in\cS(\R^m).
\]
By construction both $\gamma^*_0 g$ and $\gamma^*_1 g$ are supported by $\Gamma$ for all admissible $g$, and in fact,
\[
\gamma_0^*:\ H^{-\half}(\Gamma)\to H^{-1}_\comp(\R^m),
\qquad
\gamma_1^*:\ H^{\half}(\Gamma)\to H^{-2}_\comp(\R^m).
\]
The single-layer potential $\SL$ and the double-layer potential $\DL$ associated with $\Gamma$ are defined by
\begin{align*}
\SL:= \cG\circ \gamma^*_0,
\qquad
\DL:= \cG \circ \gamma^*_1.
\end{align*} 
By construction, the functions $\SL g$ and $\DL g$ are harmonic in $\Omega_\pm$ for all admissible $g$.
For every cutoff function $\chi \in C^\infty_c(\R^m)$, the operators
\[
H^{-\half}(\Gamma)\ni g\mapsto \chi \SL g\in H^1(\Omega_\pm),
\qquad
H^{\half}(\Gamma)\ni g\mapsto \chi \DL g\in H^1(\Omega_\pm)
\]
are bounded, and the jump relations
\begin{equation}
	\label{jump}
\begin{aligned}
	(\gamma^+_0-\gamma^-_0) \SL g&=0,& (\gamma^+_1-\gamma^-_1)\SL g &= -g & \text{for all }& g\in H^{-\half}(\Gamma), \\
	(\gamma^+_0-\gamma^-_0)\DL g&=g,& (\gamma^+_1-\gamma^-_1) \DL g &= 0 &\text{ for all } &g\in H^{\half}(\Gamma),
\end{aligned}
\end{equation}
are fulfilled. We will frequently use the identity 
\begin{equation}
	 \label{sl-dl}
\begin{aligned}(\gamma^\pm_1\SL \phi,\psi)_{H^{-\half}(\Gamma),H^\half(\Gamma)}&=(\phi,\gamma^\mp_0\DL\psi)_{H^{-\half}(\Gamma),H^\half(\Gamma)}\\
\text{ for any }& \phi\in H^{-\half}(\Gamma),\ \psi \in H^\half(\Gamma),
\end{aligned}
\end{equation}
see \cite[Thm. 6.17 and Sec.~7]{McLean}.

The above considerations give rise to four important boundary operators:
\begin{align*}
	S:=&&\gamma^+_0\SL: \ & H^{-\half}(\Gamma) \to H^{\half}(\Gamma),\\
	R:=&&-\gamma^+_1 \DL:\  &H^{\half}(\Gamma) \to H^{-\half}(\Gamma),\\
	T:=&&(\gamma^+_0 + \gamma^-_0)\DL:\ & H^{\half}(\Gamma) \to H^{\half}(\Gamma),
\intertext{and remark that the adjoint of $T$ is given by}	
	T^*:=&&(\gamma^+_1+\gamma^-_1)\SL:\ &  H^{-\half}(\Gamma) \to H^{-\half}(\Gamma).
\end{align*}
By \cite[Theorem 6.11]{McLean} these operators are bounded in respective spaces, and the jump relation \eqref{jump} yield the identities
\begin{equation}
	\label{tracesl}
\gamma^\pm_1 \SL g = \frac{1}{2} (\mp g + T^* g),
\qquad
\gamma^{\pm}_0 \DL g = \frac{1}{2}(\pm g + Tg)
\end{equation}
valid for all $g$ in respective domains.  By general results for elliptic problems of \cite[Thm. 7.6, 7.8]{McLean} and their refinement for the Laplacian (namely, \cite[Theorem 8.16]{McLean} for the single-layer boundary integral operator in $m=2$, \cite[Corollary 8.13]{McLean} for the single-layer boundary integral operator in $m=3$ and \cite[Theorem 8.21]{McLean} for the hypersingular integral operator), we have: 
\begin{lem}\label{lemma-RS}
The operator $S$ is a self-adjoint Fredholm operator of index $0$, and for a suitable choice of $r>0$ in \eqref{fundameltalsolution} one has
 $\ker S = \{0\}$, in particular, $S$ has a bounded inverse.
Furthermore, the operator $R$ is self-adjoint Fredholm of index $0$ with
$\ker R = \C \one_\Gamma$.
\end{lem}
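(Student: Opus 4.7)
The argument naturally splits into a structural part, common to $S$ and $R$, and two separate kernel computations. Self-adjointness of both operators follows from the symmetry of the associated Schwartz kernels: for $S$, directly from $G(x-y)=G(y-x)$ together with Fubini, and for $R$, from the corresponding symmetric expression for the hypersingular kernel worked out in \cite[Sec.~7]{McLean}. The Fredholm property of index zero is classical for these operators as they are elliptic of respective orders $-1$ and $+1$ on the closed manifold $\Gamma$, and I would simply invoke \cite[Cor.~8.13]{McLean} ($m\ge 3$) or \cite[Thm.~8.16]{McLean} ($m=2$) for $S$, and \cite[Thm.~8.21]{McLean} for $R$.

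To compute $\ker S$, the plan is to use the standard potential-theoretic trick: given $g\in H^{-\half}(\Gamma)$ with $Sg=0$, set $u:=\SL g$. By the jump relations \eqref{jump}, $u$ is harmonic in $\Omega_+\cup\Omega_-$, continuous across $\Gamma$ with $\gamma_0^\pm u=Sg=0$, and behaves appropriately at infinity (automatically so for $m\ge 3$; for $m=2$ only if the scale $r$ in \eqref{fundameltalsolution} is fixed so as to kill the logarithmic asymptotic constant, which is exactly the content of \cite[Thm.~8.16]{McLean}). Interior Dirichlet uniqueness on the bounded $\Omega_-$ and exterior Dirichlet uniqueness on $\Omega_+$ then give $u\equiv 0$ on both sides, and the Neumann jump relation yields $g=-(\gamma_1^+-\gamma_1^-)u=0$.

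For $\ker R$, a parallel argument with $u:=\DL g$ will work: by \eqref{jump} there is no jump in the Neumann trace, and $\gamma_1^\pm u=-Rg=0$. Interior Neumann uniqueness forces $u\equiv c_-$ on $\Omega_-$ for some constant, while exterior Neumann uniqueness (with the decay of $\DL g$ at infinity) gives $u\equiv 0$ on $\Omega_+$; the Dirichlet jump relation then yields $g=-c_-\in \C\one_\Gamma$, proving $\ker R\subset\C\one_\Gamma$. The reverse inclusion is immediate from the classical Gauss integral identity: $\DL(\one_\Gamma)$ is constant in each connected component of $\R^m\setminus\Gamma$, hence has vanishing normal derivative on $\Gamma$. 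The only real subtlety in the whole argument is the $m=2$ case, where the correct choice of $r$ in \eqref{fundameltalsolution} is indispensable both for injectivity of $S$ and for the exterior uniqueness statements used throughout.
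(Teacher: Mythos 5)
Your proposal is correct in outline and, interestingly, more detailed than the paper's own treatment: the paper does not prove Lemma~\ref{lemma-RS} but simply cites McLean's general elliptic results (Thm.~7.6, 7.8) together with Cor.~8.13, Thm.~8.16 and Thm.~8.21 of the same reference, and records the hypothesis on $r$ in the sentence following the lemma. Your potential-theoretic sketch for $\ker R$ is fully correct as written: zero Neumann trace on both sides, interior and exterior Neumann uniqueness, the Dirichlet jump, and the Gauss identity \eqref{DLone} exactly recover $\ker R = \C\one_\Gamma$. The $m\ge 3$ case of $\ker S$ is also watertight.

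There is, however, a genuine misconception in your $m=2$ treatment of $\ker S$. You say the behavior of $\SL g$ at infinity is acceptable ``only if the scale $r$ is fixed so as to kill the logarithmic asymptotic constant.'' But the expansion $\SL g(x) = -\tfrac{1}{2\pi}(g,\one_\Gamma)\log\tfrac{|x|}{r} + O(1/|x|)$ (compare \eqref{slhx}) shows that the coefficient of $\log|x|$ is $-\tfrac{1}{2\pi}(g,\one_\Gamma)$, which is completely independent of $r$; changing $r$ only shifts the additive constant. So for a generic $g\in\ker S$, Lemma~\ref{uniq}(a) (which needs $u(x)=O(1)$ for $m=2$) cannot be invoked directly, and the exterior uniqueness step in your sketch would fail. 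The actual role of $r$ is subtler: McLean's Theorem~8.16 shows that $\dim\ker S\in\{0,1\}$ and that the degenerate case occurs only when a certain scale-dependent constant tied to the logarithmic capacity of $\Gamma$ equals one; choosing $r$ to avoid this value gives injectivity. To close the $m=2$ case rigorously along your lines one would instead use the augmented solvability result recorded as Lemma~\ref{Sh+c=G} in the paper, or simply rely on McLean~Thm.~8.16 directly (which you do cite, so the end conclusion is safe, but the mechanism you describe is not the real one).
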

From now we assume that $r$ in \eqref{fundameltalsolution} is chosen in such a way that the assertions of Lemma~\ref{lemma-RS} hold.

Remark that for any $g\in H^\half(\Gamma)$ and $x\in\R^m\setminus\Gamma$ one has
	\begin{align*}
	\DL g(x)&=\int_{\Gamma} \partial_{\nu_y} G(x-y) g(y)\,\dd s(y)=\dfrac{1}{\Upsilon_m}\int_\Gamma \dfrac{\langle \nu_y,x-y\rangle_{\R^{m}}}{|x-y|^m}g(y)\dd s(y).
\end{align*}		
Using a standard computation with an integration by parts 
we arrive at the following identities:
\begin{equation}
	\DL \one_\Gamma=0 \text{ in } \Omega_+, \quad
	\DL \one_\Gamma=-1 \text{ in } \Omega_-,
	\quad
	\gamma^+_0 \DL \one_\Gamma=0,
	\quad
	\gamma^-_0 \DL \one_\Gamma= -\one_\Gamma.
	\label{DLone}
\end{equation}

%

 The following two theorems are crucial for the subsequent considerations. First, \cite[Thm 7.15 + 8.9]{McLean} yield:
\begin{thm} \label{boundequ}
Let $f\in L^2_\comp(\Omega_+)$. 

(A) Let $g\in H^\half(\Gamma)$. If $u \in H^1_\loc(\Omega_+)$ solves the exterior Dirichlet problem
		\begin{equation}\label{dirpro}
			\left\{\begin{aligned}
				- \Delta u &= f  \text{ in }\Omega_+,\\
				\gamma^+_0 u &= g  \text{ on }\Gamma,\\
				u(x)&= \begin{cases}
					O(|x|^{2-m})  \text{ for } m\ge 3,\\
					b \log |x| + O(|x|^{-1}) \text{ with some $b\in\C$ for  } m=2,
				\end{cases} \text{ as } |x|\to \infty,
			\end{aligned}\right.
		\end{equation}
		then the function
		\[
		h:=\gamma^+_1 u \in H^{-\half}(\Gamma)
		\]
		satisfies the equation
		\begin{align}\label{shboundaryequ}
			Sh = \gamma^+_0 \cG f - \frac{1}{2}(g-Tg),
		\end{align}
		and $u$ can be represented as
		\begin{align}\label{solutionrepr}
			u= \cG f + \DL g - \SL h.
		\end{align}
		Conversely, if $h\in H^{-\frac{1}{2}}(\Gamma)$ is a solution of \eqref{shboundaryequ}, then the equation \eqref{solutionrepr} defines a solution of \eqref{dirpro}. 

(B) Let $h\in H^{-\frac{1}{2}}(\Gamma)$. If $u \in H^1_\loc(\Omega_+)$ is a solution of the exterior Neumann problem
	\begin{equation}\label{neupro}
	\left\{\begin{aligned}
		- \Delta u &= f  \text{ in }\Omega_+,\\
		\gamma^+_1 u &= h  \text{ on }\Gamma,\\
		u(x)&= \begin{cases}
			O(|x|^{2-m})  \text{ for } m\ge 3,\\
			b \log |x| + O(|x|^{-1}) \text{ with some $b\in\C$ for  } m=2,
		\end{cases} \text{ as } |x|\to \infty,
	\end{aligned}\right.
\end{equation}
then the function $g:= \gamma^+_0 u \in H^{\half}(\Gamma)$ is a solution of the equation 
		\begin{align}\label{rgboundaryequ}
			Rg = \gamma^+_1\cG f - \frac{1}{2}(h+T^*h)
		\end{align}
		and $u$ is of the form \eqref{solutionrepr}. 		Conversely, if $g\in H^{\half}(\Gamma)$ is a solution of \eqref{rgboundaryequ}, then the function $u$ defined by~\eqref{solutionrepr} defines a solution of \eqref{neupro}. 
\end{thm}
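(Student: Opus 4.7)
The plan is to derive the two boundary integral equations via Green's third identity adapted to the exterior configuration, and then to reverse the derivation to obtain the converses. The core of the argument is the representation
\[
u = \cG f + \DL g - \SL h \text{ in } \Omega_+,
\]
valid for any solution $u$ of \eqref{dirpro} or \eqref{neupro} with $g = \gamma_0^+ u$ and $h = \gamma_1^+ u$. This is the standard third Green identity, but one must be attentive to the sign convention: since $\nu$ points \emph{into} $\Omega_+$, the outward normal to $\Omega_+$ is $-\nu$, and this sign flip is exactly what produces the combination $+\DL g - \SL h$ rather than its opposite.

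To establish the representation itself I would work first with smooth data on the truncated domain $\Omega_+\cap B_R$ punctured at the evaluation point $x$ by a small ball $B_\varepsilon(x)$, apply the classical Green's second identity to $u$ and $G(x-\cdot)$, and let $\varepsilon\to 0$ and $R\to\infty$. The decay hypothesis in \eqref{dirpro}/\eqref{neupro} is exactly what kills the spherical contribution on $\partial B_R$: for $m\ge 3$ the bound $u(x)=O(|x|^{2-m})$ combined with $G(x-\cdot)=O(|x|^{2-m})$ and $|\nabla G(x-\cdot)|=O(|x|^{1-m})$ produces a surface integral of order $O(R^{3-m})\to 0$; for $m=2$ the extra logarithmic branch $b\log|x|$ is built into the hypothesis precisely so that the logarithmic growth of $G$ is compensated in the limit. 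Extending from smooth $u$ to $u\in H^1_\loc(\Omega_+)$ with $\Delta u\in L^2_\loc(\Omega_+)$ would then proceed by density, using the continuity of $\cG$, $\SL$, $\DL$ and the trace maps between the relevant function spaces.

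With the representation in hand, the boundary equations follow by algebra. Applying $\gamma_0^+$ and $\gamma_1^+$ to \eqref{solutionrepr} and using the jump identities \eqref{tracesl} yields
\[
g = \gamma_0^+\cG f + \tfrac12(g+Tg) - Sh,
\qquad
h = \gamma_1^+\cG f - Rg + \tfrac12(h - T^*h),
\]
which rearrange into \eqref{shboundaryequ} and \eqref{rgboundaryequ} respectively. For the converses in both (A) and (B) the strategy is to \emph{define} $u$ by \eqref{solutionrepr} and verify in turn: the membership $u\in H^1_\loc(\Omega_+)$ (continuity of $\cG$, $\SL$, $\DL$); the equation $-\Delta u = f$ in $\Omega_+$ (immediate since $-\Delta\cG f = f$ on $\R^m$ while $\SL h$ and $\DL g$ are harmonic off $\Gamma$); the matching Dirichlet or Neumann trace, which reduces through \eqref{tracesl} to the assumed boundary equation; and finally the decay at infinity, which follows from the compact support of $f$ and the far-field asymptotics of the fundamental solution and of the two layer potentials (with a $b\log|x|$ term in $m=2$ coming from $\int_\Gamma h$).

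The main obstacle will be the rigorous justification of the representation formula in the weak $H^1_\loc$ setting on the unbounded exterior. One must argue carefully that the contribution on $\partial B_R$ really vanishes using only the prescribed polynomial or logarithmic decay (with non-trivial bookkeeping in the $m=2$ case), and that the passage from smooth approximants to the distributional limit respects the duality pairings defining $\gamma_0^{\ast}$, $\gamma_1^{\ast}$ and the surface potentials $\SL$, $\DL$. Everything past that point is essentially bookkeeping with the boundary operators $S$, $T$, $R$, $T^*$ and the jump relations \eqref{tracesl} already on hand.
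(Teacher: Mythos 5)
The paper does not prove this theorem: it is stated as a direct consequence of results in McLean's book (Theorems 7.15 and 8.9 there), so there is no in-paper proof for you to match. What you have sketched is, essentially, the argument that underlies those cited theorems: Green's third identity adapted to the exterior configuration, truncation by a large ball, use of the radiation condition to kill the far-field spherical integral, and then the jump relations \eqref{tracesl} to extract the boundary integral equations. Your algebraic reduction is correct; indeed, applying $\gamma_0^+$ and $\gamma_1^+$ to \eqref{solutionrepr} with $\gamma_0^+\DL g=\tfrac12(g+Tg)$, $\gamma_1^+\DL g=-Rg$, $\gamma_0^+\SL h = Sh$, $\gamma_1^+\SL h=\tfrac12(-h+T^*h)$ gives exactly \eqref{shboundaryequ} and \eqref{rgboundaryequ}, and the same identities run backwards give the converse claims. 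You also correctly flag where the genuine work lives (the weak-sense Green identity on the unbounded exterior and the $m=2$ logarithmic bookkeeping, which McLean handles via the $\mathcal{M}u=0$ formalism). The one thing I would add is that when invoking the decay of $u$ in $m\ge 3$, one should also use the companion decay of $\partial_r u = O(|x|^{1-m})$, which the authors state elsewhere (Lemma \ref{uvskal}) but which your estimate on $\partial B_R$ implicitly needs; it is not automatic from $u=O(|x|^{2-m})$ alone without the harmonicity and a Kelvin-transform or interior-estimate argument. With that caveat, your outline reproduces the content of the cited reference correctly, which in this instance is exactly what one would expect since the authors deliberately offload the proof.
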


\begin{rmk}
	The required behavior of $u$ at infinity in Theorem \ref{boundequ} is termed as $\mathcal{M}u=0$
	in most assertions in the book \cite{McLean}, see \cite[Thm. 8.9]{McLean} for more detail.
\end{rmk}

	The above result states an equivalence between the solutions to the exterior boundary-value problems and the representation (Kirchhoff) formula \eqref{solutionrepr}, coupled with associated boundary integral equations for unknown traces. However, at this point it is unclear whether the exterior boundary-value problems are well-posed. This will be clarified in Theorem \ref{DTNdge2}.

The following theorem combines the claims of \cite[Thm. 8.10 and 8.18]{McLean}:
\begin{lem} \label{uniq} 
	Let $u\in H^1_\loc(\Omega_+)$ with $\Delta u = 0$ on $\Omega_+$ satisfy the radiation condition
	\[
	u(x)=O\big(|x|^{2-m}\big)  \text{ for }|x| \to \infty,
	\]
	then:
	\begin{enumerate}
		
		\item[(a)] $\gamma^+_0u = 0$ if and only if $u=0$ on $\Omega_+$,
		
		\item[(b)] $\gamma^+_1 u = 0$ if and only if
		\begin{itemize}
			\item[(i)] $u=0$ in $\Omega_+$ for $m\ge 3$,
			\item[(ii)]  $u$ is constant on $\Omega_+$ for $m=2$.
		\end{itemize}
		
	\end{enumerate}
\end{lem}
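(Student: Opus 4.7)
The ``easy'' implications are immediate: if $u\equiv 0$ on $\Omega_+$, both traces vanish; and for $m=2$ any constant trivially satisfies $\gamma_1^+u=0$. The content of the lemma thus lies in the converse uniqueness statements for the exterior Dirichlet and Neumann problems.

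My plan is to apply Green's first identity to $u$ on the truncated domain $\Omega_+\cap B_R$, where $B_R$ is a ball of radius $R$ centered at the origin chosen large enough that $\overline{\Omega_-}\subset B_R$. Since $u\in H^1_\loc(\Omega_+)$ and $\Delta u=0$, elliptic regularity makes $u$ classical smooth in $\Omega_+$, and the identity reads
\[
\int_{\Omega_+\cap B_R}|\nabla u|^2\,\dd x = -\big(\gamma_1^+u,\overline{\gamma_0^+u}\big)_{H^{-1/2}(\Gamma),H^{1/2}(\Gamma)} + \int_{\partial B_R}\bar{u}\,\partial_r u\,\dd s,
\]
the minus sign arising because $\nu$ points into $\Omega_+$ and is thus the inward-pointing normal to $\Omega_+\cap B_R$ along $\Gamma$. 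Under the hypothesis of either (a) or (b) the $\Gamma$-term drops out, and the remaining task is to show that the integral over $\partial B_R$ tends to zero as $R\to\infty$, which will force $\nabla u\equiv 0$ on $\Omega_+$.

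For $m\ge 3$, standard interior gradient estimates for harmonic functions --- $|\nabla u(x)|\le C|x|^{-1}\sup_{B(x,|x|/2)}|u|$ applied in annuli, combined with the hypothesis $u(x)=O(|x|^{2-m})$ --- yield $|\nabla u(x)|=O(|x|^{1-m})$. Hence $|\bar u\,\partial_r u|=O(|x|^{3-2m})$ on $\partial B_R$, and the surface integral is of order $R^{m-1}\cdot R^{3-2m}=R^{2-m}\to 0$. For $m=2$ the bare hypothesis is only $u=O(1)$, which is borderline; the additional input is the classification of bounded harmonic functions on the exterior of a disk in $\R^2$ via a Laurent--Fourier expansion, which shows that both the logarithmic and the polynomially growing terms are absent. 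Consequently $u(x)=u_\infty+O(|x|^{-1})$ and $\nabla u(x)=O(|x|^{-2})$ for some $u_\infty\in\C$, giving $|\bar u\,\partial_r u|=O(|x|^{-2})$ and a surface integral of order $R^{-1}\to 0$.

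Letting $R\to\infty$ in either case yields $\int_{\Omega_+}|\nabla u|^2\,\dd x=0$, so $u$ is a constant $c$ on $\Omega_+$. For $m\ge 3$, the decay $u\to 0$ forces $c=0$, which gives both (a) and (b)(i). For $m=2$, case (a) gives $c\one_\Gamma=\gamma_0^+u=0$, hence $c=0$; case (b) imposes no further constraint, and $u\equiv c$ is the best possible conclusion, exactly (b)(ii). The main subtlety of the argument is the sharp treatment of the $m=2$ case, where the decay supplied by hypothesis is too weak on its own to close the boundary integral at infinity and must be upgraded using the far-field structure of planar bounded harmonic functions.
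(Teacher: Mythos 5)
Your proof is correct, but note that the paper does not actually prove this lemma: it is stated as a direct citation of Theorems 8.10 and 8.18 in McLean's book. What you have done is reconstruct a self-contained argument via Green's identity on the truncated domain $\Omega_+\cap B_R$ together with decay estimates for harmonic functions. This is, in fact, exactly the mechanism the paper deploys in the immediately following lemma (the one establishing $\big(\gamma^+_1 u,\overline{\gamma^+_0v}\big)_{H^{-1/2}(\Gamma),H^{1/2}(\Gamma)} = -\int_{\Omega_+}\langle\nabla u,\nabla v\rangle\,\dd x$), where the same truncation-and-limit computation is carried out using the same decay rates $\partial_r u = O(|x|^{1-m})$ for $m\ge 3$ and $\partial_r u = O(|x|^{-2})$ for $m=2$; there the paper attributes this decay to a proposition of Folland rather than deriving it from a Laurent--Fourier expansion as you do. You have correctly identified the delicate point — that for $m=2$ the bare $O(1)$ hypothesis on $u$ together with interior gradient estimates would only give $\nabla u = O(|x|^{-1})$, which is insufficient to kill the flux through $\partial B_R$ — and you have repaired it by classifying bounded harmonic functions on the exterior of a disk via their Laurent expansion, ruling out the logarithmic and growing modes and extracting $u = u_\infty + O(|x|^{-1})$, $\nabla u = O(|x|^{-2})$. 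The concluding step (constant forced to zero by the decay for $m\ge3$, by the vanishing Dirichlet trace in case (a), and left free in case (b)(ii) for $m=2$) is also right; it uses implicitly that $\Omega_+$ is connected, which the paper assumes. In short, your argument is a legitimate and more self-contained alternative to the paper's bare citation, and it is consonant with the technique the paper itself employs one lemma later.
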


The following computation will also be used at several places:

\begin{lem} \label{uvskal} 
	Let $u,v\in H^1_\loc(\Omega_+)$ with $\Delta u = \Delta v=0$ on $\Omega_+$ satisfy the radiation condition
\[
u(x)=O\big(|x|^{2-m}\big), \quad v(x)=O\big(|x|^{2-m}\big)\quad \text{for }|x| \to \infty,
\]
then
\[
\big(\gamma^+_1 u,\overline{\gamma^+_0v}\,\big)_{H^{-\half}(\Gamma),H^\half(\Gamma)}=-\int_{\Omega_+} \langle \nabla u,\nabla v\rangle_{\C^m}\dd x.
\]
\end{lem}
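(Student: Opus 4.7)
The strategy is to reduce to the classical Green's first identity on a bounded domain and pass to the limit. Fix $R_0>0$ with $\overline{\Omega_-}\subset B_{R_0}$, and for $R>R_0$ set $D_R:=\Omega_+\cap B_R$, a bounded Lipschitz domain with $\partial D_R=\Gamma\cup\partial B_R$. Since $u|_{D_R}\in H^1(D_R)$ with $\Delta u=0\in L^2(D_R)$, its distributional Neumann trace on $\partial D_R$ is well-defined in $H^{-\half}(\partial D_R)$ and satisfies the Green's identity
\begin{equation*}
\int_{D_R}\langle\nabla u,\nabla v\rangle_{\C^m}\,\dd x=\int_{\partial D_R}(\partial_{n}u)\,\bar v\,\dd s,
\end{equation*}
interpreted as an $H^{-\half}/H^{\half}$ duality pairing, where $n$ denotes the outward normal of $D_R$. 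On $\Gamma$ this normal equals $-\nu$ (as $\nu$ points into $\Omega_+\supset D_R$), while on $\partial B_R$ it equals $\nu_R:=x/|x|$; by locality, the trace from $D_R$ on $\Gamma$ agrees with the trace from $\Omega_+$. Splitting the boundary integral and rearranging gives
\begin{equation*}
\bigl(\gamma_1^+u,\overline{\gamma_0^+v}\bigr)_{H^{-\half}(\Gamma),H^{\half}(\Gamma)}=-\int_{D_R}\langle\nabla u,\nabla v\rangle_{\C^m}\,\dd x+\int_{\partial B_R}(\partial_{\nu_R}u)\,\bar v\,\dd s,
\end{equation*}
which reduces the lemma to showing that the right-hand side converges to $-\int_{\Omega_+}\langle\nabla u,\nabla v\rangle_{\C^m}\,\dd x$ as $R\to\infty$.

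Both the convergence of the bulk integral (requiring $\nabla u,\nabla v\in L^2(\Omega_+)$) and the vanishing of the spherical integral follow from sharp pointwise decay estimates for $\nabla u$ and $\nabla v$ at infinity. For $m\ge 3$, interior Cauchy estimates for harmonic functions applied on balls $B(x,|x|/2)\subset\Omega_+$, combined with $u(x)=O(|x|^{2-m})$, give $\nabla u(x)=O(|x|^{1-m})$, so $|\nabla u|^2=O(|x|^{2-2m})$ is integrable at infinity. The spherical term is then bounded by $C\cdot R^{m-1}\cdot R^{1-m}\cdot R^{2-m}=O(R^{2-m})\to 0$, and analogously for $v$.

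The principal obstacle is the case $m=2$: the radiation condition reads $O(|x|^{2-m})=O(1)$ (mere boundedness), and the direct Cauchy estimate only yields $\nabla u(x)=O(|x|^{-1})$, which is insufficient to make $|\nabla u|^2$ integrable at infinity. To overcome this I would invoke the Kelvin transform $x\mapsto x/|x|^2$, which conjugates bounded harmonic functions on $\{|x|>R_0\}$ into bounded harmonic functions on a punctured disk; by the removable-singularity theorem these extend harmonically across the origin, and a Taylor expansion supplies a constant $u_\infty\in\C$ with $u(x)=u_\infty+O(|x|^{-1})$ and $\nabla u(x)=O(|x|^{-2})$, with the analogue for $v$. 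Consequently $|\nabla u|^2=O(|x|^{-4})$ is integrable at infinity, and the spherical integral is bounded by $C\cdot R\cdot R^{-2}\cdot 1=O(R^{-1})\to 0$. This completes the passage to the limit and proves the claimed identity.
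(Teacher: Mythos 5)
Your proof is correct and follows the same overall skeleton as the paper's: apply Green's identity on the truncated domain $\Omega_+\cap B_R$ and let $R\to\infty$. Where you diverge is in the two technical inputs. For the decay that kills the spherical boundary term, the paper simply cites \cite[Prop.~2.75]{Folland} to get $\partial_r u(x)=O(|x|^{1-m})$ for $m\ge 3$ and $O(|x|^{-2})$ for $m=2$, whereas you derive full gradient estimates $\nabla u=O(|x|^{1-m})$ (via interior Cauchy estimates on balls $B(x,|x|/2)$) and, in the delicate case $m=2$, via the Kelvin transform plus the removable-singularity theorem to upgrade $u(x)=O(1)$ to $u(x)=u_\infty+O(|x|^{-1})$, $\nabla u(x)=O(|x|^{-2})$. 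For the integrability of $\langle\nabla u,\nabla v\rangle$ over $\Omega_+$, the paper sidesteps gradient decay entirely with a neat self-application: it first derives $(\gamma_1^+u,\overline{\gamma_0^+v})=-\lim_{R\to\infty}\int_{\Omega_R^+}\langle\nabla u,\nabla v\rangle\dd x$, then takes $u=v$ so that monotone convergence forces $|\nabla u|^2\in L^1(\Omega_+)$, making the limit a genuine Lebesgue integral; your argument instead reads off integrability directly from the pointwise bounds $|\nabla u|^2=O(|x|^{2-2m})$ (resp.\ $O(|x|^{-4})$). Both routes are sound; yours is more self-contained (no reference to Folland's asymptotics and an explicit Kelvin-transform argument for $m=2$), while the paper's is shorter because it avoids proving gradient decay at all, obtaining $L^2$-integrability as a by-product of the identity itself.
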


\begin{proof}
Remark first that by \cite[Prop.~2.75]{Folland} the radial derivative $\partial_ r u$ of $u$ satisfies
	\[
	\partial_r u(x)=\begin{cases} O\big(|x|^{1-m}\big), & m\ge 3,\\
		O\big(|x|^{-2}\big), & m=2,
		\end{cases} \text{ for } |x|\to\infty
	\]
Let $R>0$ such that $\overline{\Omega_-}\subset B_R(0)$ and denote $\Omega^+_R = B_R(0) \cap \Omega_+$.
We have then
\begin{align*}	0&= \int_{\Omega^+_R} (\Delta u) \overline{v} \,\dd x=(\Delta u,v)_{H^{-1}_\comp(\Omega_+),H^1_\loc(\Omega_+)}\\
&= -\big(\gamma^+_1  u, \overline{\gamma^+_0 v} \big)_{H^{-\half}(\Gamma), H^{\half}(\Gamma)} +  \int_{|x|=R} (\partial_r u) \overline{v}\,\dd s-\int_{\Omega^+_R}\langle \nabla u,\nabla v\rangle_{\C^m}\dd x,
\end{align*}
with $\dd s$ being the hypersurface measure, which yields
\begin{equation}
	\label{lok22}
	\big(\gamma^+_1  u, \overline{\gamma^+_0 v} \big)_{H^{-\half}(\Gamma), H^{\half}(\Gamma)}=-\int_{\Omega^+_R}\langle \nabla u,\nabla v\rangle_{\C^m}\dd x+\int_{|x|=R} (\partial_r u) \overline{v}\,\dd s.	
\end{equation}
Using the above bounds for $\partial_r u$ and $v$, for large $R$ we have (with some fixed $C>0$)
\[
\Big|
\int_{|x|=R} (\partial_r u) \overline{v}\,\dd s\Big|
\le
\begin{cases}
 C \cdot R^{m-1} \cdot R^{1-m}\cdot R^{2-m}=O(R^{2-m})=o(1), & m\ge 3,\\
 C \cdot R \cdot R^{-2}\cdot  1=O(R^{-1})=o(1), & m=2,
\end{cases} 
\]
and sending $R$ to $\infty$ in \eqref{lok22} gives
\begin{equation}
	\label{lok23}
	\big(\gamma^+_1  u, \overline{\gamma^+_0 v} \big)_{H^{-\half}(\Gamma), H^{\half}(\Gamma)}=-\lim_{R\to\infty}\int_{\Omega^+_R}\langle \nabla u,\nabla v\rangle_{\C^m}\dd x.	
\end{equation}
Using the last identity for $u=v$ we conclude that $|\nabla u|^2$ and $|\nabla v|^2$ are integrable on $\Omega_+$,
so the limit on the right-hand side of \eqref{lok23} is exactly the Lebesgue integral over $\Omega_+$.
\end{proof}

\subsection{Dirichlet-to-Neumann map for $m\ge 3$}\label{sec-m3}

\begin{thm} \label{DTNdge2}
	Let $m\geq 3$ and $f\in L^2_\comp(\Omega_+)$. Then 
	\begin{enumerate}
		\item[(a)] For every $g\in H^\half (\Gamma)$ there is a unique solution $u\in H^1_\loc(\Omega_+)$
		to the Dirichlet problem with radiation condition
		\[
			\left\{\begin{aligned}
			- \Delta u &= f  \text{ in }\Omega_+,\\
			\gamma^+_0 u &= g  \text{ on }\Gamma,\\
			u(x)&= 	O(|x|^{2-m}) \text{ for } |x|\to \infty,
		\end{aligned}\right.
		\]
	and for any cut-off function $\varphi\in C^\infty_c(\R^m)$ one can find a constant $c>0$ such that for any $(f,g)$ as above and the respective solution $u$ one has
	\begin{equation}
		 \label{uc1}
	\|\varphi u\|_{H^1(\Omega_+)}\le c\big( \|f\|_{L^2(\Omega)}+\|g\|_{H^\half(\Gamma)}\big).
	\end{equation}
	\item[(b)] For every $h\in H^{-\half}(\Gamma)$ there is a unique $u\in H^1_\loc(\Omega_+)$ fulfilling the Neumann problem with radiation condition
	\[
				\left\{\begin{aligned}
		- \Delta u &= f  \text{ in }\Omega_+,\\
		\gamma^+_1 u &= h  \text{ on }\Gamma,\\
		u(x)&= 	O(|x|^{2-m}) \text{ for } |x|\to \infty.
	\end{aligned}\right.
	\]
	\item[(c)]
	For $g\in H^\half(\Gamma)$ let $u_g$ be the solution of (a) for $f=0$.	 Define the Dirichlet-to-Neumann operator 
	\[
		\dtn: \ H^{\half}(\Gamma)\ni g\mapsto \gamma^+_1 u_g\in H^{-\half}(\Gamma),
	\]
	then $\dtn$ is bounded, invertible and coercive, i.e. there is a constant $c>0$ such that for every $g\in H^{\half}(\Gamma)$ one has
	\[
	-\big(\dtn g, \overline{g}\big)_{H^{-\half}(\Gamma), H^\half(\Gamma)} \geq c \|g\|^2_{H^\half(\Gamma)}.
	\]  
	\end{enumerate}
\end{thm}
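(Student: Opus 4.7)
The plan is to prove (a) by a boundary integral reduction, to deduce (c) from the representation of $u_g$ combined with Lemma~\ref{uvskal}, and finally to derive (b) from the invertibility of $\dtn$ established in (c).

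For (a), uniqueness follows from Lemma~\ref{uniq}(a) applied to the difference of two solutions. Existence is an application of Theorem~\ref{boundequ}(A): since $S$ is invertible by Lemma~\ref{lemma-RS}, the boundary integral equation $S h = \gamma^+_0 \cG f - \half(g - Tg)$ has a unique solution $h \in H^{-\half}(\Gamma)$, and $u := \cG f + \DL g - \SL h$ is the desired solution. The continuity estimate $\|\varphi u\|_{H^1(\Omega_+)} \leq c(\|f\|_{L^2(\Omega)}+\|g\|_{H^{\half}(\Gamma)})$ follows by tracking norms in this representation, using boundedness of $S^{-1}$ and $T$, continuity of $\cG$ from $L^2_\comp(\R^m)$ into $H^2_\loc(\R^m)$, and the local $H^1$-bounds on $\SL$ and $\DL$ recalled before Theorem~\ref{boundequ}.

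Turning to (c), the identity $\dtn g = h = -\half\, S^{-1}(I - T)g$ (read off from Theorem~\ref{boundequ}(A) with $f=0$, since then $h = \gamma^+_1 u_g = \dtn g$) makes the boundedness of $\dtn$ manifest as a composition of bounded maps. Applying Lemma~\ref{uvskal} with $u = v = u_g$ (the decay hypothesis is satisfied for $m \geq 3$) gives
\[
(\dtn g, \overline{g})_{H^{-\half}(\Gamma), H^{\half}(\Gamma)} = -\|\nabla u_g\|^2_{L^2(\Omega_+)},
\]
so $\dtn$ is non-positive, and equality holds only when $\nabla u_g \equiv 0$. Since $\Omega_+$ is connected this forces $u_g$ to be constant, and the decay $O(|x|^{2-m})$ with $m \geq 3$ then forces that constant to vanish, so $g = \gamma^+_0 u_g = 0$ and $\dtn$ is injective.

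I expect coercivity to be the main obstacle. The essential analytic ingredient, available precisely for $m \geq 3$, is a Hardy-type estimate on the exterior domain: for any ball $B_R$ with $\overline{\Omega_-}\subset B_R$ and any $u \in H^1_\loc(\Omega_+)$ harmonic with $u(x) = O(|x|^{2-m})$ one has $\|u\|_{L^2(B_R \cap \Omega_+)} \leq C(R)\|\nabla u\|_{L^2(\Omega_+)}$. This follows from the classical Hardy inequality $\int_{\R^m}|u|^2/|x|^2 \dd x \leq \frac{4}{(m-2)^2}\int |\nabla u|^2 \dd x$ (valid only for $m \geq 3$) combined with a suitable extension or cutoff argument. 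Together with the trace inequality $\|\gamma^+_0 u_g\|_{H^{\half}(\Gamma)} \leq C\|u_g\|_{H^1(B_R \cap \Omega_+)}$, this yields $\|g\|^2_{H^{\half}(\Gamma)} \leq C(-\dtn g,\overline{g})_{H^{-\half}(\Gamma),H^{\half}(\Gamma)}$, which is the desired coercivity. Invertibility of $\dtn: H^{\half}(\Gamma) \to H^{-\half}(\Gamma)$ then follows by Lax--Milgram applied to the bounded, Hermitian, coercive sesquilinear form $(g_1,g_2) \mapsto -(\dtn g_1, \overline{g_2})_{H^{-\half}(\Gamma), H^{\half}(\Gamma)}$.

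Finally, part (b) is reduced to (c). Given $h \in H^{-\half}(\Gamma)$ and $f \in L^2_\comp(\Omega_+)$, the Newton potential $v := \cG f$ is harmonic outside $\supp f$ with decay $O(|x|^{2-m})$, so $\gamma^+_1 v \in H^{-\half}(\Gamma)$ is well defined. Setting $g := \dtn^{-1}(h - \gamma^+_1 v)$ and $u := v + u_g$, where $u_g$ is the solution of (a) with data $(0,g)$, produces a function with $-\Delta u = f$, $\gamma^+_1 u = \gamma^+_1 v + \dtn g = h$, and the required decay. Uniqueness follows from Lemma~\ref{uniq}(b)(i) applied to the difference of two solutions.
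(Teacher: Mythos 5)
Your architecture is genuinely different from the paper's: you establish (a), then (c), then deduce (b) from the invertibility of $\dtn$, whereas the paper proves (a), then (b) (via the compatibility condition $I=0$ for the hypersingular equation $Rg=\cdots$), and only then obtains invertibility and coercivity of $\dtn$ in (c). Your order forces you to prove coercivity directly, without the crutch of a bounded $\dtn^{-1}$; the paper, by contrast, gets coercivity by a duality trick: $\|g\|_{H^{1/2}}^2=\sup_h |(h,\bar g)|^2/\|h\|^2$, Cauchy--Schwarz via Lemma~\ref{uvskal}, and the bound $\|\nabla v_h\|_{L^2}^2\le\|\dtn^{-1}\|\,\|h\|^2_{H^{-1/2}}$, which is available only after (b). Your subsequent derivation of (b) from $\dtn^{-1}$ is cleaner than the paper's explicit evaluation of the solvability condition, so if your route closes it is arguably more economical.

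The gap is precisely where you flagged it. The inequality $\|u\|_{L^2(B_R\cap\Omega_+)}\le C\|\nabla u\|_{L^2(\Omega_+)}$ for decaying harmonic $u$ is true, but it does not follow from the classical Hardy inequality by a naive cutoff. If $\chi$ vanishes near $\overline{\Omega_-}$ and equals $1$ far out, Hardy applied to $\chi u$ produces $\int|\nabla(\chi u)|^2\le C(\|\nabla u\|^2_{L^2(\Omega_+)}+\|u\|^2_{L^2(\operatorname{supp}\nabla\chi)})$, and the second term is exactly the quantity you are trying to control: circular. If instead you try to apply Hardy to an extension of $u$ across $\Gamma$, the $H^1$ extension is controlled by $\|u\|_{H^1(\Omega_+\cap B_{2R})}$, again smuggling in the $L^2$ norm. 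A correct proof needs a genuinely exterior argument, e.g.\ the ray-integration estimate $u(x)=-\int_1^\infty x\cdot\nabla u(tx)\,\dd t$ for $|x|\ge R$ (valid by the decay), followed by Cauchy--Schwarz with the integrable weight $t^{1-m}$ (this is where $m\ge3$ enters) and a change of variables; this controls $\|u\|_{L^2(B_{2R}\setminus B_R)}$ by $\|\nabla u\|_{L^2(\Omega_+\setminus B_R)}$, after which a standard Poincar\'e inequality on $B_{2R}\cap\Omega_+$ with the mean anchored on the annulus finishes the job. Alternatively, one can invoke a weighted Sobolev (Beppo-Levi) theory for exterior domains, which does rely on Hardy but packages the extension step carefully. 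Either way, the one-line appeal to ``Hardy plus cutoff'' is not enough as written. Aside from this, the rest of your argument (the representation $\dtn g=-\tfrac12 S^{-1}(I-T)g$, nonpositivity and injectivity via Lemma~\ref{uvskal}, Lax--Milgram for invertibility, and the reduction of (b)) is correct and complete.
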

\begin{proof}
The uniqueness in (a) and (b) follows directly by Lemma~\eqref{uniq}(a,b-i). For the existence
	we note first that by Theorem \ref{boundequ} the solvability in (a) resp. (b)  is equivalent to the  solvability of the boundary equations \eqref{shboundaryequ} resp. \eqref{rgboundaryequ}. Since $S$ is a bijective isomorphism, the problem \ref{shboundaryequ} has a unique solution $h\in H^{-\half}(\Gamma)$ for any choice of $g$, and this solution continuously depends on $g$, which proves the existence in (a). The stability estimate \eqref{uc1} follows from the representation \eqref{solutionrepr} and the continuity properties of $\cG$, $\SL$ and $\DL$.
	
To complete the argument for (b) slightly more work is needed. 
First, by using \eqref{tracesl} we rewrite the condition \eqref{rgboundaryequ} as $Rg=\gamma_1^+\cG f - \gamma^-_1 \SL h$.
As $R$ is Fredholm and self-adjoint with $\ker R=\C \one_\Gamma$, this last equation is solvable with respect to $g$
if and only if the right-hand side satisfies
\begin{align} \label{solutioncondition}
	I:=( \one_{\Gamma}, \gamma^+_1 \cG f - \gamma^-_1 \SL h )_{H^\half (\Gamma), H^{-\half}(\Gamma)} = 0.
\end{align}
Using the definition of $\DL$ we obtain
\begin{align*}
( \one_{\Gamma}, \gamma^+_1 \cG f)_{H^\half (\Gamma), H^{-\half}(\Gamma)}&=\big(\DL \one_\Gamma, f\big)_{H^1_{\loc}(\Omega_+),H^{-1}_{\comp}(\Omega_+)}\stackrel{\eqref{DLone}}{=}
\big(0, f\big)_{H^1_{\loc}(\Omega_+),H^{-1}_{\comp}(\Omega_+)}=0,\\
( \one_{\Gamma}, \gamma^-_1 \SL h )_{H^\half (\Gamma), H^{-\half}(\Gamma)}&=
( \gamma^+_0 \DL \one_{\Gamma}, h )_{H^\half (\Gamma), H^{-\half}(\Gamma)}
\stackrel{\eqref{sl-dl}}{=}( 0, h )_{H^\half (\Gamma), H^{-\half}(\Gamma)}=0,
\end{align*}
therefore, $I=0$ and the solvability condition \eqref{solutioncondition} is fulfilled, which proves (b).

It remains to prove (c). Let $g\in H^\half(\Gamma)$ with  $\dtn g=0$, then $u_g$ is a solution of (b) for $f=0$ and $h=0$, i.e. $u_g=0$ in $\Omega_+$, and $g=\gamma^+_0 u_g=0$. This proves the injectivity of $\dtn$. Let $h\in H^{\half}(\Gamma)$ and let $u$ be a solution of (b) for $f=0$ and the chosen $h$, then $h=\dtn g$ for $g:=\gamma^+_0 u$, which shows the surjectivity of $\dtn$.

We further remark that for any $g\in H^\half(\Gamma)$, as shown by \eqref{shboundaryequ} with $f=0$, 
\begin{align*}
\dtn g= -\frac{1}{2}S^{-1} (g-Tg),
\end{align*}
which yields the boundedness of $\dtn$, and the open mapping theorem implies that $\dtn^{-1}$ is bounded as well.

It remains to show the coercivity. For $h\in H^{-\half}(\Gamma)$ let $v_h$ be the solution of (b) with $f=0$.
For any $g\in H^\half(\Gamma)$ and any $h\in H^{-\half}(\Gamma)$ we have then, due to Lemma \ref{uvskal}:
	\[
	-(h,\overline{g})_{H^{-\half}(\Gamma), H^{\half}(\Gamma)}=\int_{\Omega_+}\langle \nabla v_h,\nabla u_g\rangle_{\C^N}\dd x.
\]
In particular, noting that $v_h=u_g$ for $h=\dtn g$ one arrives at
\begin{equation}
	\label{dtn1}
	-(\dtn g,\overline{g})_{H^{-\half}(\Gamma), H^{\half}(\Gamma)}=\int_{\Omega_+} |\nabla u_g|^2\dd x
\end{equation}
and, similarly,
\[
\int_{\Omega_+}	|\nabla v_h|^2\dd x =-\big(h, \overline{\dtn^{-1} h} \big)_{H^{-\half}(\Gamma), H^{\half}(\Gamma)}\leq  \|\dtn^{-1}\|_{H^{-\half}(\Gamma) \to H^{\half}(\Gamma)} \|h\|^2_{H^{-\half}(\Gamma)}. 
\]
Then for any $g \in H^{\frac{1}{2}}(\Gamma)$ we have
	\begin{align*}
		\|g\|_{H^{\frac{1}{2}}(\Gamma)}^2 &= \sup\limits_{h\in H^{-\frac{1}{2}}(\Gamma),\, h\ne 0} \dfrac{\big|(h,\Bar g )_{H^{-\half }(\Gamma), H^{\half }(\Gamma)}\big|^2}{\|h\|_{H^{-\half}(\Gamma)}^2}\\
		& = \sup\limits_{h\in H^{-\half}(\Gamma),\, h\ne 0} \dfrac{\bigg|\displaystyle \int_{\Omega_+}\langle \nabla v_h,\nabla u_g\rangle_{\C^m}\dd x	\bigg|^2	
		}{\|h\|_{H^{-\half}(\Gamma)}^2}\\
		& \leq \sup\limits_{h\in H^{-\half}(\Gamma),\,h\ne 0} \dfrac{\displaystyle \int_{\Omega_+}|\nabla v_h|^2\dd x \int_{\Omega_+} |\nabla u_g|^2\dd x}{\|h\|_{H^{-\half }(\Gamma)}^{2}}\\
		&\leq \sup\limits_{h\in H^{-\half}(\Gamma),\,h\ne 0} \dfrac{\displaystyle \|\dtn^{-1}\|_{H^{-\half}(\Gamma) \to H^{\half}(\Gamma)} \|h\|^2_{H^{-\half}(\Gamma)} \int_{\Omega_+} |\nabla u_g|^2\dd x}{\|h\|_{H^{-\half }(\Gamma)}^{2}}\\
		&\le  \|\dtn^{-1}\|_{H^{-\half}(\Gamma) \to H^{\half}(\Gamma)}  \int_{\Omega_+} |\nabla u_g|^2\dd x,
	\end{align*}
	i.e.
	\[
	\int_{\Omega_+} |\nabla u_g|^2\dd x\ge \|\dtn^{-1}\|_{H^{-\half}(\Gamma) \to H^{\half}(\Gamma)}^{-1} \|g\|_{H^{\frac{1}{2}}(\Gamma)}^2,
	\]
	and the substitution into \eqref{dtn1} gives the required result.
\end{proof}

\subsection{Dirichlet-to-Neumann map for $m=2$}\label{sec-m2}
Compared to the case $m\geq 3$ considered in the statement of Theorem \ref{DTNdge2}, the two-dimensional case presents additional difficulties. Recall the lemma from \cite[Sec.~8.14]{McLean}:
\begin{lem} \label{Sh+c=G}
	Let $m=2$, then for any $(g,b)\in H^{\half}(\Gamma)\times \C$ there is a unique solution $(h,c)\in H^{-\half}(\Gamma) \times \C$ to the system
	\[
	\left\{\begin{aligned}
	Sh + c &= g,\\
	\langle h,\one_{\Gamma} \rangle_{H^{-\half}(\Gamma), H^{\half}(\Gamma)} &= b.
	\end{aligned}\right.
	\] 
	In addition, the solution $(h,c)$ is continuously dependent on the right-hand side $(g,b)$, i.e. there is $c_0>0$ such that
	\[
	\|h\|^2_{H^{-\half}(\Gamma)} + |c|^2 \le c_0\big(\|g\|^2_{H^\half(\Gamma)}+|b|^2\big)
	\]
	for all $(g,b)$.
\end{lem}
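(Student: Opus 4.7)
The plan is to reformulate the system as a single linear equation $\mathcal{A}(h,c) = (g,b)$ with the bounded operator
$$\mathcal{A}\colon H^{-\half}(\Gamma)\times\C \to H^{\half}(\Gamma)\times\C,\qquad \mathcal{A}(h,c):=\big(Sh+c\one_\Gamma,\ \langle h,\one_\Gamma\rangle_{H^{-\half}(\Gamma),H^{\half}(\Gamma)}\big),$$
and to show that $\mathcal{A}$ is bijective. The claimed continuous dependence of $(h,c)$ on $(g,b)$ then follows at once from the open mapping theorem.

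I would first argue that $\mathcal{A}$ is Fredholm of index zero by comparing it with the model operator $\mathcal{A}_0(h,c):=(Sh,c)$, which is an isomorphism because $S$ is invertible by Lemma~\ref{lemma-RS} under our standing choice of $r$ in \eqref{fundameltalsolution}. The difference $\mathcal{A}-\mathcal{A}_0$ takes values in the finite-dimensional subspace $\C\one_\Gamma\oplus\C$, hence is compact, and $\mathcal{A}$ inherits the Fredholm index zero of $\mathcal{A}_0$. It then suffices to prove $\ker\mathcal{A}=\{0\}$.

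For injectivity, assume $Sh+c\one_\Gamma=0$ and $\langle h,\one_\Gamma\rangle=0$, and set $u:=\SL h$ on $\Omega_+$. Then $u$ is harmonic with Dirichlet trace $\gamma_0^+u=Sh=-c\one_\Gamma$. The vanishing of $\langle h,\one_\Gamma\rangle$ cancels the leading logarithmic term in the large-$|x|$ expansion of the two-dimensional single-layer potential built from \eqref{fundameltalsolution}, yielding the sharp decay $u(x)=O(|x|^{-1})$; in particular $u(x)=O(|x|^{2-m})$ for $m=2$, so Lemma~\ref{uvskal} applies and gives
$$\int_{\Omega_+}|\nabla u|^2\,\dd x=-\big(\gamma_1^+u,\ \overline{\gamma_0^+u}\,\big)_{H^{-\half}(\Gamma),H^{\half}(\Gamma)}=\bar c\,\big(\gamma_1^+u,\ \one_\Gamma\big)_{H^{-\half}(\Gamma),H^{\half}(\Gamma)}.$$
Using the jump formula $\gamma_1^+\SL h=\tfrac12(-h+T^*h)$ from \eqref{tracesl} together with the identity $T\one_\Gamma=-\one_\Gamma$ derived from \eqref{DLone}, each remaining duality product reduces to a multiple of $\langle h,\one_\Gamma\rangle$, which is zero. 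Hence $\nabla u\equiv 0$ on the connected exterior $\Omega_+$, and the decay at infinity forces $u\equiv 0$; therefore $c\one_\Gamma=-\gamma_0^+u=0$ and consequently $h=S^{-1}(0)=0$.

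The only genuinely subtle point is the large-$|x|$ asymptotic analysis of $\SL h$ for $m=2$ under the cancellation $\langle h,\one_\Gamma\rangle=0$: one has to expand the logarithmic kernel in \eqref{fundameltalsolution} explicitly to verify the improved decay $O(|x|^{-1})$, because this is precisely what brings $u$ into the scope of the radiation condition required by Lemma~\ref{uvskal}. The rest of the argument is structural and rests only on the jump relations and the identities \eqref{DLone}--\eqref{sl-dl} already recorded in the excerpt.
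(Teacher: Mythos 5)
Your proof is correct. The paper itself does not supply an internal proof of this lemma but merely cites McLean, Sec.~8.14, so your argument fills in what the authors left to an external reference. The structure is sound: your comparison operator $\mathcal{A}_0(h,c)=(Sh,c)$ is an isomorphism because $S$ is invertible (Lemma~\ref{lemma-RS}); the difference $\mathcal{A}-\mathcal{A}_0$ has range contained in the two-dimensional space $\C\one_\Gamma\times\C$, hence is compact, so $\mathcal{A}$ is Fredholm of index zero, reducing solvability and the continuity bound (via the open mapping theorem) to injectivity. The injectivity step also checks out: the constraint $\langle h,\one_\Gamma\rangle=0$ removes the $\log|x|$ term in the far-field expansion of the two-dimensional single-layer potential, giving $\SL h(x)=O(|x|^{-1})$ --- precisely the estimate the paper records later as \eqref{slhx} --- so $u:=\SL h$ satisfies the radiation condition of Lemma~\ref{uvskal}; and the remaining pairing $\bigl(\gamma_1^+ u,\one_\Gamma\bigr)$ vanishes, since $\gamma_1^+\SL h=\tfrac12(-h+T^*h)$ by \eqref{tracesl} while $T\one_\Gamma=-\one_\Gamma$ follows from \eqref{DLone} combined with \eqref{tracesl}, giving $(T^*h,\one_\Gamma)=(h,T\one_\Gamma)=-\langle h,\one_\Gamma\rangle=0$. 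Then $\nabla u\equiv 0$ on the connected $\Omega_+$, the $O(|x|^{-1})$ decay forces $u\equiv 0$, whence $c=0$ and $h=S^{-1}0=0$. The one analytic step you flag --- expanding the logarithmic kernel --- is indeed the substantive estimate; the paper carries it out explicitly in the proof of Theorem~\ref{Dirchletd1}, which occurs after the present lemma in the text, so in a self-contained treatment you would need to write out that short computation.
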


\begin{thm} \label{Dirchletd1}
	Let $m=2$ and $f\in L^2_\comp(\Omega_+)$, then for any $g\in H^{\half}(\Gamma)$ there is a unique $u\in H^1_\loc(\Omega_+)$ solving the Dirichlet problem with the radiation condition
	\[
	\left\{
	\begin{aligned}
		-\Delta u &= f & \text{ in }&\Omega_+,\\
		\gamma^+_0u &= g & \text{ on }& \Gamma,\\
		u(x) &= O(1) & \text{ for }|x| &\to \infty.
	\end{aligned}\right.
	\]
	This solution has the form
	\begin{equation}
	\label{u-ansatz}
	u = \cG f + \SL h + c, 
\end{equation}
with $h\in H^{-\half} (\Gamma)$ and $c\in\C$ satisfying
\begin{equation}
	\label{eq-shc}
Sh+c=g-\gamma^+_0\cG f,\qquad ( h,\one_\Gamma)_{H^{-\half}(\Gamma),H^\half(\Gamma)}=-\int_{\supp f} f\,\dd x.
\end{equation}
In addition, for any cut-off function $\varphi\in C^\infty_c(\R^2)$ there is a constant $c>0$ 
such that for all $(f,g)$ as above and the respective solution $u$ it  holds that
	\begin{equation}
		\label{unorm00}
	\|\varphi u\|_{H^1(\Omega_+)} \le c \big(\|f\|_{L^2(\Omega_+)} + \|g\|_{H^\half(\Gamma)}\big).
	\end{equation}
\end{thm}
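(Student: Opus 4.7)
The plan is to mimic the strategy used in the case $m\geq 3$: reduce the exterior Dirichlet problem to a boundary integral system, apply Lemma~\ref{Sh+c=G} to solve it, and verify by direct computation that the resulting $u$ satisfies all three conditions. Uniqueness is immediate: in dimension $m=2$ the requirement $u(x)=O(1)$ coincides with the hypothesis $O(|x|^{2-m})$ of Lemma~\ref{uniq}(a), so any two solutions differ by a harmonic function with zero Dirichlet trace and admissible decay, and hence by zero.

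The ansatz \eqref{u-ansatz} is natural because $\cG f$ handles the inhomogeneity, $\SL h$ generates all admissible harmonic perturbations in $\Omega_+$, and the additive constant $c$ absorbs the freedom coming from the two-dimensional non-uniqueness observed in Lemma~\ref{uniq}(b)(ii). Applying $\gamma^+_0$ to \eqref{u-ansatz} and using $\gamma^+_0\SL=S$ yields the first equation $Sh+c=g-\gamma^+_0\cG f$ in \eqref{eq-shc}. To produce the second equation I would split
\[
G(x-y)=\frac{1}{2\pi}\log\frac{r}{|x|}+\frac{1}{2\pi}\log\frac{|x|}{|x-y|},
\]
whose second summand is $O(|y|/|x|)$ uniformly for $y$ in a fixed compact set, leading to the asymptotics
\[
\cG f(x)=\frac{1}{2\pi}\log\frac{r}{|x|}\int_{\Omega_+}f(y)\,\dd y+O(|x|^{-1}),\qquad \SL h(x)=\frac{1}{2\pi}\log\frac{r}{|x|}\bigl(h,\one_\Gamma\bigr)_{H^{-\half}(\Gamma),H^\half(\Gamma)}+O(|x|^{-1}),
\]
as $|x|\to\infty$, where the pairing against $\one_\Gamma$ is legitimate because $y\mapsto\log(|x|/|x-y|)$ lies in $C^\infty(\Gamma)\subset H^\half(\Gamma)$ for $|x|$ large. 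Imposing $u(x)=O(1)$ then forces the coefficient of $\log|x|$ to vanish, which produces the second equation of \eqref{eq-shc}.

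Lemma~\ref{Sh+c=G} applied with right-hand sides $g-\gamma^+_0\cG f\in H^\half(\Gamma)$ and $-\int_{\supp f}f\,\dd x\in\C$ furnishes a unique pair $(h,c)\in H^{-\half}(\Gamma)\times\C$ solving \eqref{eq-shc}, together with the continuous dependence that transfers via the boundedness of $\cG$ and $\SL$ into $H^1$ on any fixed compact set to yield \eqref{unorm00}. For the associated $u$ from \eqref{u-ansatz}, the PDE $-\Delta u=f$ follows from $-\Delta\circ\cG=\Id$ together with harmonicity of $\SL h$ off $\Gamma$; the trace identity $\gamma^+_0u=g$ is built into the first equation of \eqref{eq-shc}; and $u=O(1)$ at infinity holds by the cancellation of logarithms arranged in the second.

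The most delicate step is making the pointwise expansions of $\cG f$ and $\SL h$ at infinity rigorous for merely $L^2_\comp$ data and $H^{-\half}(\Gamma)$ densities, so that $u=O(1)$ really holds uniformly in direction rather than only in an integrated sense. This reduces to the smoothness of $y\mapsto G(x-y)$ away from the diagonal and to uniform $H^\half(\Gamma)$-control of the remainder term $y\mapsto\log(|x|/|x-y|)$ as $|x|\to\infty$, both of which are standard but require careful accounting with the Sobolev duality pairing.
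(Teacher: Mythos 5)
Your proposal matches the paper's proof essentially line by line: the same ansatz $u=\cG f+\SL h+c$, the same decomposition of $G(x-y)$ into a $\log|x|$ leading term plus an $O(1/|x|)$ remainder paired against $h$ via the $H^{-\half}$--$H^\half$ duality (the paper carries this out explicitly by bounding $\|F_x\|_{H^1(B)}=O(1/|x|)$ and invoking the trace map, which is precisely the ``careful accounting'' you flag as the delicate step), the same reduction to Lemma~\ref{Sh+c=G}, and the same uniqueness argument via Lemma~\ref{uniq}(a). The reasoning is sound and takes the same route as the paper.
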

Remark that the radiation condition at infinity in the above theorem differs from the condition at infinity stated in Theorem \ref{boundequ}. This explains appearance of an extra constant term in the representation formula \eqref{u-ansatz}.
\begin{proof}
The uniqueness of $u$ follows by Lemma~\ref{uniq}(a). For the existence we will use the ansatz \eqref{u-ansatz}
with $(h,c) \in H^{-\half}(\Gamma) \times \C$ to be determined. Remark that $-\Delta u = f$ for any choice of $(h,c)$. For any $x\in \Omega_+$ one has by construction
	\begin{align*}
		(\cG f)(x)&=\int_{\supp f} G(x-y)f(y)\dd y=-\frac{1}{2\pi}\int_{\supp f} \log\dfrac{|x-y|}{r} f(y)\dd y,\\[\medskipamount]
	\big(\SL h\big)(x)&=\big( h,G(x-\cdot)\big)_{H^{-\half}(\Gamma),H^\half(\Gamma)}
	=-\frac{1}{2\pi} \Big( h,\gamma^+_0 \log \dfrac{|x-\cdot|}{r}\Big)_{H^{-\half}(\Gamma),H^\half(\Gamma)}.
	\end{align*}
	Representing
	\[
	-\frac{1}{2\pi}\log\dfrac{|x-y|}{r}=-\frac{1}{2\pi}\log\dfrac{|x|}{r}+F_x(y)
	\]
	with 
	\[
	F_x(y):=-\dfrac{1}{2\pi}\log\Big|\dfrac{x}{|x|}-\dfrac{y}{|x|}\Big|\equiv -\dfrac{1}{4\pi}
	\log\Big( 1-2\dfrac{\langle x,y\rangle_{\R^2}}{|x|^2}+\dfrac{|y|^2}{|x|^2}\Big)
	\]
	we obtain
	\begin{align*}
	(\cG f)(x)&=-\dfrac{1}{2\pi}\log \dfrac{|x|}{r}\int_{\supp f} f\,\dd x+\int_{\supp f} F_x(y)f(y)\dd y,\\
	\big(\SL h\big)(x)&=-\dfrac{1}{2\pi}\log \dfrac{|x|}{r}( h,1)_{H^{-\half}(\Gamma),H^\half(\Gamma)}
	+( h,\gamma^+_0 F_x)_{H^{-\half}(\Gamma),H^\half(\Gamma)}.
	\end{align*}
Using Taylor expansion of $\log(1+\cdot)$ we immediately obtain
\[
\|F_x\|_{L^\infty(B)}=O\Big(\dfrac{1}{|x|}\Big) \text{ for }|x|\to\infty,
\]
for any bounded $B\subset\R^2$, and for $B:=\supp f$ this results in
\begin{equation}
	\label{g-asymp}
(\cG f)(x)=-\dfrac{1}{2\pi}\log \dfrac{|x|}{r}\int_{\supp f} f\,\dd x+O\Big(\dfrac{1}{|x|}\Big) \text{ for }|x|\to\infty.
\end{equation}
Now let $B\subset\R^2$ be a bounded open set containing $\Gamma$, then for each $j\in\{1,2\}$ we have additionally
\[
\|\partial_{y_j} F_x\|_{L^\infty(B)}=O\Big(\dfrac{1}{|x|}\Big) \text{ for }|x|\to\infty,
\]
showing
\[
\|F_x\|_{H^1(B)}=O\Big(\dfrac{1}{|x|}\Big) \text{ for }|x|\to\infty,
\]
and the boundedness of the trace map implies
\[
\|\gamma^+_0 F_x\|_{H^\half(\Gamma)}=O\Big(\dfrac{1}{|x|}\Big) \text{ for }|x|\to\infty,
\]
and then
\begin{equation}
	\label{slhx}
\big(\SL h\big)(x)=-\dfrac{1}{2\pi}\log \dfrac{|x|}{r}( h,1)_{H^{-\half}(\Gamma),H^\half(\Gamma)}
+O\Big(\dfrac{1}{|x|}\Big) \text{ for }|x|\to\infty.
\end{equation}
We conclude that the function $u$ given by \eqref{u-ansatz} behaves
as
\[
u(x)=-\dfrac{1}{2\pi}\Bigg[\int_{\supp f} f\,\dd x+ ( h,1)_{H^{-\half}(\Gamma),H^\half(\Gamma)}\Bigg]
\log \dfrac{|x|}{r}+c +O\Big(\dfrac{1}{|x|}\Big) \text{ for }|x|\to\infty,
\]
and it satisfies the radiation condition $u(x)=O(1)$ for large $|x|$ if and only if the coefficient in the square brackets vanishes. Furthermore,
\[
\gamma_0^+u=\gamma^+_0\cG f +S h+c.
\]
Using Lemma \ref{Sh+c=G} we can choose $(h,c)$ such that \eqref{eq-shc} holds,
then
\[
u(x)=c+O\Big(\dfrac{1}{|x|}\Big)=O(1) \text{ for }|x|\to\infty,
\qquad \gamma^+_0 u= g,
\]
i.e. all requirements are satisfied. The norm control \eqref{unorm00} follows from the continuity of the map $(g,f)\mapsto (h,c)$ and the mapping properties of $\cG$ and $\SL$.
\end{proof}
Again, unlike in the case $m\geq 3$, the exterior Neumann problem for the Laplacian is no longer well-posed for arbitrary initial data, but rather requires a certain compatibility relation that ensures the existence of the solution.
\begin{thm} \label{Neumannd1}
Let $m=2$, then for any $f\in L^2_\comp(\Omega_+)$ and $h \in H^{-\half}(\Gamma)$ with
\[
\int_{\Omega_+} f\dd x+\big( h, \one_{\Gamma}\big)_{H^{-\half}(\Gamma), H^{\half}(\Gamma)}=0
\]
 there is a unique solution $u \in H^1_{\loc}(\Omega_+)$ for the Neumann problem 	
	\[
	\left\{
	\begin{aligned}
	-\Delta u &= f  \text{ in }\Omega_+,\\
		\gamma^+_1 u &= h  \text{ on }\Gamma, \\
		u(x) &= o(1) \text{ for } |x| \to \infty.
	\end{aligned}\right.
	\]
\end{thm}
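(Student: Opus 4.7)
The plan is to apply Theorem \ref{boundequ}(B) to produce a solution with the weaker radiation condition $u(x) = b\log|x| + O(|x|^{-1})$, and then to exploit the compatibility hypothesis on $(f,h)$ to force $b=0$, which upgrades the decay to $O(|x|^{-1}) = o(1)$.

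\textbf{Uniqueness.} If $u_1,u_2$ both solve the problem, then $v := u_1 - u_2 \in H^1_\loc(\Omega_+)$ is harmonic on $\Omega_+$, satisfies $\gamma^+_1 v = 0$, and obeys $v(x)=o(1)$ at infinity; in particular $v(x) = O(|x|^{2-m})$ for $m=2$, so Lemma \ref{uniq}(b-ii) forces $v$ to be constant on $\Omega_+$, and the decay at infinity forces $v \equiv 0$.

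\textbf{Existence.} I would seek $u$ in the representation form $u = \cG f + \DL g - \SL h$ with $g \in H^{\half}(\Gamma)$ to be determined, so that $-\Delta u = f$ holds automatically. By Theorem \ref{boundequ}(B) the condition $\gamma^+_1 u = h$ is equivalent to the boundary equation
\[
Rg \;=\; \gamma^+_1 \cG f - \tfrac{1}{2}(h + T^* h).
\]
Since $R$ is self-adjoint Fredholm of index zero with $\ker R = \C\one_\Gamma$ (Lemma \ref{lemma-RS}), solvability reduces to the right-hand side being annihilated by $\one_\Gamma$. Repeating the computation from the proof of Theorem \ref{DTNdge2}(b) one checks $(\one_\Gamma, \gamma^+_1 \cG f)_{H^\half, H^{-\half}} = 0$, and \eqref{sl-dl} combined with \eqref{DLone} yields $(\one_\Gamma, T^* h)_{H^\half,H^{-\half}} = -(h,\one_\Gamma)_{H^{-\half},H^\half}$, so the solvability condition holds unconditionally on $(f,h)$ and a suitable $g$ exists. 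Its non-uniqueness modulo $\C\one_\Gamma$ is harmless in $\Omega_+$ since $\DL \one_\Gamma \equiv 0$ there by \eqref{DLone}.

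\textbf{Cancelling the logarithm.} Theorem \ref{boundequ}(B) provides the a priori asymptotic $u(x) = b\log|x| + O(|x|^{-1})$ for some $b \in \C$. To identify $b$ I would stack the expansions established in the proof of Theorem \ref{Dirchletd1}, namely \eqref{g-asymp} for $\cG f$ and \eqref{slhx} for $\SL h$, together with the bound $\DL g(x) = O(|x|^{-1})$ at infinity (the double-layer kernel $\partial_{\nu_y} G(x-y)$ is $O(|x|^{-1})$ uniformly for $y \in \Gamma$, since $|x-y| \sim |x|$ as $|x|\to\infty$). The resulting coefficient of $\log(|x|/r)$ in $u$ is a nonzero scalar multiple of the left-hand side of the compatibility hypothesis on $(f,h)$, which therefore forces $b = 0$, yielding $u(x) = O(|x|^{-1}) = o(1)$ as required. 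The only genuinely delicate point in the whole argument is this last sign-tracking at infinity; every other step simply recycles the boundary-integral framework already deployed for the case $m \ge 3$.
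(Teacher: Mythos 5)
Your uniqueness step is correct and matches the paper's word for word. Your verification that the boundary equation $Rg = \gamma_1^+\cG f - \frac{1}{2}(h+T^*h)$ has a vanishing Fredholm obstruction, and that the $\C\one_\Gamma$ non-uniqueness of $g$ is harmless in $\Omega_+$, are also correct. The gap is exactly the sign-tracking you flag as delicate but then do not carry out, and it is fatal as the argument stands.

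Your ansatz $u = \cG f + \DL g - \SL h$ is the one from Theorem \ref{boundequ}(B); the paper instead works with $u = \cG f - \DL g + \SL h$, with both layer-potential signs flipped. This is not cosmetic. Stacking \eqref{g-asymp}, \eqref{slhx} and $\DL g(x)=O(|x|^{-1})$, your ansatz yields
\[
u(x) = -\frac{1}{2\pi}\log\frac{|x|}{r}\Big[\int_{\Omega_+} f\,\dd x \;-\; (h,\one_\Gamma)_{H^{-\half}(\Gamma),H^\half(\Gamma)}\Big] + O(|x|^{-1}),
\]
the minus sign in front of the pairing coming from the $-\SL h$ in the representation. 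The bracket is $\int f - (h,\one_\Gamma)$, not $\int f + (h,\one_\Gamma)$, and these two expressions are linearly independent functionals of $(f,h)$; so your claim that the resulting $\log$-coefficient "is a nonzero scalar multiple of the left-hand side of the compatibility hypothesis" is false, and the hypothesis does not kill the logarithm under your ansatz. The paper's sign-flipped ansatz yields the bracket $\int f + (h,\one_\Gamma)$, which does vanish under the stated hypothesis, but it also produces a different boundary equation, $Rg = h - \gamma_1^+\cG f - \gamma_1^+\SL h$, whose solvability condition $(\one_\Gamma,\,h-\gamma_1^+\cG f - \gamma_1^+\SL h)=0$ must then be, and in the paper is, re-derived from scratch rather than read off from Theorem \ref{boundequ}(B). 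The two routes therefore diverge at the choice of signs in the representation formula, and yours is mismatched to the compatibility condition as written. It would be worthwhile to cross-check via Green's identity on $\Omega_+\cap B_R$ which sign the necessary compatibility condition actually carries, since the entire layer-potential bookkeeping hinges on it.
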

\begin{proof}
The uniqueness follows by Lemma \ref{uniq}(b-ii). Indeed, assume that $u$ solves the Neumann problem with $f=0$ and $h=0$. From the radiation condition $u(x)=o(1)$ for $|x|\rightarrow +\infty$ it follows that $u(x)=O(1)$ for $|x|\rightarrow +\infty$  and by Lemma \ref{uniq}, $u=\operatorname{const}$. However, since $u(x)=o(1)$ for $|x|\rightarrow +\infty$, necessarily, $u=0$. 
To show the existence we use the ansatz
	\[
	u= \cG f-\DL g + \SL h 
	\]
	with $g\in H^{\half}(\Gamma)$ to be determined. Note that $-\Delta u = f$ for any choice of $g$.
	
	Let us show that $u$ satisfies the required bound at infinity.	By using the above assumptions on $f$ and $h$
	in the previously obtained estimates \eqref{g-asymp} and \eqref{slhx} we conclude that
	\[
	\cG f(x)+\SL h(x)=o(1) \text{ for }|x|\to\infty,
	\]
	and it remains to study $\DL g(x)$ for large $x$. For $x\in \Omega_+$ one has
	\begin{align*}
		\big|\DL g(x)\big|&=\bigg|\int_{\Gamma} \partial_{\nu_y} G(x-y) g(y)\,\dd s(y)\bigg|=\frac{1}{2\pi}\bigg|\int_\Gamma \dfrac{\langle \nu_y,x-y\rangle_{\R^2}}{|x-y|^2}g(y)\dd s(y)\bigg|\\
		&\le \dfrac{1}{2\pi}\int_\Gamma \bigg|\dfrac{\langle \nu_y,x-y\rangle_{\R^2}}{|x-y|^2}\bigg|\cdot \big|g(y)\big|\dd s(y)\le \int_\Gamma \dfrac{\big|g(y)\big|}{|x-y|}\dd s(y).
	\end{align*}		
	Let $R>0$ with $\Gamma\subset B_R(0)$, then for $|x|>R$ and $y\in\Gamma$ one has $|x-R|\ge |x|-R$, which yields
	\begin{equation}
		\label{dlg}
	\big|\DL g(x)\big|\le \dfrac{1}{2\pi}\cdot\dfrac{1}{|x|-R}\|g\|_{L^1(\Gamma)}=o(1) \text{ for } |x|\to\infty.
	\end{equation}
	This shows $u(x)=o(1)$ for $|x|\to\infty$ for any choice of $g$.
	
	It remains to check that $g$ can be chosen such that $u$ satisfies the required boundary condition. Note that
	$\gamma^+_1 u=\gamma^+_1\cG f+Rg+\gamma^+_1 \SL h$, and the boundary condition $\gamma^+_1 u=h$ is satisfied if and only if
	\[
	h=\gamma^+_1\cG f+Rg+\gamma^+_1 \SL h \quad \text{ i.e. } \quad Rg=h-\gamma^+_1\cG f-\gamma^+_1 \SL h.
	\]
	As $R$ is Fredholm and self-adjoint with $\ker R=\C \one_\Gamma$ (see Lemma \ref{lemma-RS}), this last equation is solvable with respect to $g$ if any only if
	\begin{equation}
		\label{hsl}
	I:=(\one_\Gamma,h-\gamma^+_1\cG f -\gamma^+_1 \SL h)_{H^\half(\Gamma),H^{-\half}(\Gamma)}=0.
	\end{equation}
	We have $I=I_1-(I_2+I_3)$, where
	\begin{align*}
		I_1&:=(\one_\Gamma,\gamma^+_1\cG f)_{H^\half(\Gamma),H^{-\half}(\Gamma)},\\
		I_2&:=(\one_\Gamma,h)_{H^\half(\Gamma),H^{-\half}(\Gamma)},\\
		I_3&:=(\one_\Gamma,\gamma^+_1 \SL h)_{H^\half(\Gamma),H^{-\half}(\Gamma)},
	\end{align*} 
	For $I_1$ we have, due to the definition of $\DL$,
	\[
	I_1=(\DL \one, f)_{H^1_\loc(\Omega_+),H^{-1}_\comp(\Omega_+)}=0,
	\]
	as $\DL\one_\Gamma=0$ in $\Omega_+$, see \eqref{DLone}. With the help of \eqref{sl-dl} we obtain
\[
I_3=(\gamma^-_0 \DL \one_\Gamma,h)_{H^{\half}(\Gamma),H^{-\half}(\Gamma)}.
\]
By \eqref{DLone} one has $\gamma^-_0 \DL \one_\Gamma= -\one_\Gamma$, hence,
	\[
	I_3=-(\one_\Gamma,h)_{H^{-\half}(\Gamma),H^\half(\Gamma)}=-I_2.
	\]
This shows $I=0$, i.e. the solvability condition \eqref{hsl} is satisfied, which gives a harmonic function $u$ of the above form with $\gamma^+_1 u=h$. 
\end{proof}

To prove the result that follows, it will be convenient to introduce the following decomposition of the space $H^{\half}(\Gamma)$:
		\begin{align}
			\label{eq:decomp_direct_sum}
			H^{ \half}(\Gamma)&=\mathbb{C}\one_{\Gamma}\overset{.}{+}H^{ \half}_0(\Gamma),\\
			\nonumber
			H^\half_0(\Gamma)&:=\Big\{ g\in H^\half(\Gamma):\, (\one_\Gamma,g)_{H^{-\half}(\Gamma),H^\half(\Gamma)}=0\Big\}. 
		\end{align}
This decomposition can be obtained by introducing the $L^2-$orthogonal projector $\Pi_1$ onto the space $\C\one_{\Gamma}$: 
\begin{align*}
	\Pi_1: \, H^{\half}(\Gamma)\ni g\mapsto g:=\frac{(g, \one_{\Gamma})_{L^2(\Gamma)}}{\|\one_{\Gamma}\|_{L^2(\Gamma)}^2}\one_{\Gamma}=\frac{1}{|\Gamma|}\bigg(\int_{\Gamma}g\dd x\bigg)\one_{\Gamma}\in H^{\half}(\Gamma).
\end{align*} 
The projector on the subspace $H^{\half}_0(\Gamma)$ with respect to the direct sum \eqref{eq:decomp_direct_sum} is
given by $\Pi_{0}=\operatorname{Id}-\Pi_1$, and thus the corresponding direct sum becomes orthogonal with respect to the $L^2(\Gamma)$-scalar product. Remark that $\Pi_1$ and $\Pi_0$ are continuous in $H^{\half}(\Gamma)$ and consider their
adjoints (which are also projectors):
\[
			\Pi_1^*, \, \Pi_0^*: \, H^{-\half}(\Gamma)\rightarrow H^{-\half}(\Gamma).
\]
The usual computation for the adjoints, see e.g. \cite[Theorem 2.13]{McLean}, gives
\begin{equation}
				\label{eq:h0m12}
\begin{aligned}
			\ran \Pi^*_0&=	\Big\{ h\in H^{-\half}(\Gamma):\ 
			\big( h, \one_{\Gamma}\big)_{H^{-\half}(\Gamma), H^{\half}(\Gamma)}=0\Big\}=:H^{-\half}_0(\Gamma),\\
			\ran \Pi_1^*&=\C\one_{\Gamma},
\end{aligned} 
\end{equation}
which gives the direct sum decomposition
\[
	H^{- \half}(\Gamma)=\C\one_{\Gamma}\overset{.}{+}H^{-\half}_0(\Gamma)
\]
with $\Pi_1^*$, respectively, $\Pi_0^*$ being the projector on the first, respectively, the second component,
and one obtains the following assertion:
		\begin{lem} The map
			\label{lem:equiv_norm}
		\begin{align*}
			\varphi\mapsto |\varphi|_{H^{-\half}(\Gamma)}:= \|\Pi_0^*\varphi\|_{H^{-\half}(\Gamma)}+\|\Pi_1^*\varphi\|_{H^{-\half}(\Gamma)}
		\end{align*}
		defines an equivalent norm on $H^{-\half}(\Gamma)$. 
	\end{lem}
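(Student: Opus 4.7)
The plan is to derive the claimed equivalence directly from the fact that $\Pi_0^*$ and $\Pi_1^*$ are bounded projectors on $H^{-\half}(\Gamma)$ that sum to the identity. Indeed, $\Pi_1$ is explicitly given on $H^\half(\Gamma)$ by
\[
\Pi_1 g = \tfrac{1}{|\Gamma|}\,(\one_\Gamma, g)_{H^{-\half}(\Gamma), H^\half(\Gamma)}\,\one_\Gamma,
\]
which is a continuous rank-one operator on $H^\half(\Gamma)$ because $\one_\Gamma \in H^{-\half}(\Gamma)$ (and $\one_\Gamma \in H^\half(\Gamma)$). Then $\Pi_0 = \mathrm{Id} - \Pi_1$ is also continuous on $H^\half(\Gamma)$, and $\Pi_0 + \Pi_1 = \mathrm{Id}_{H^\half(\Gamma)}$. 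Taking adjoints yields bounded operators $\Pi_0^*, \Pi_1^*$ on $H^{-\half}(\Gamma)$ with $\Pi_0^* + \Pi_1^* = \mathrm{Id}_{H^{-\half}(\Gamma)}$.

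Next I would establish the upper inequality. For any $\varphi \in H^{-\half}(\Gamma)$, the identity $\varphi = \Pi_0^*\varphi + \Pi_1^*\varphi$ combined with the triangle inequality gives
\[
\|\varphi\|_{H^{-\half}(\Gamma)} \le \|\Pi_0^*\varphi\|_{H^{-\half}(\Gamma)} + \|\Pi_1^*\varphi\|_{H^{-\half}(\Gamma)} = |\varphi|_{H^{-\half}(\Gamma)}.
\]

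For the converse inequality, I would use the boundedness of the adjoint projectors directly: setting $C := \|\Pi_0^*\|_{H^{-\half}(\Gamma)\to H^{-\half}(\Gamma)} + \|\Pi_1^*\|_{H^{-\half}(\Gamma)\to H^{-\half}(\Gamma)}$, one gets
\[
|\varphi|_{H^{-\half}(\Gamma)} \le C\,\|\varphi\|_{H^{-\half}(\Gamma)}.
\]
Combining the two bounds proves that $|\cdot|_{H^{-\half}(\Gamma)}$ is equivalent to $\|\cdot\|_{H^{-\half}(\Gamma)}$. There is no real obstacle here; the statement is a standard consequence of having a topological direct sum decomposition with continuous projectors, and the only point that deserves a brief verification is the continuity of $\Pi_1$ on $H^\half(\Gamma)$, which has already been recorded just above the statement.
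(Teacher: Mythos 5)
Your proof is correct and takes essentially the same approach as the paper: both rely on the continuity of $\Pi_0^*$ and $\Pi_1^*$ for the inequality $|\varphi|_{H^{-\half}(\Gamma)}\le C\|\varphi\|_{H^{-\half}(\Gamma)}$, and both obtain the reverse inequality from the identity $\Pi_0^*+\Pi_1^*=\mathrm{Id}$ together with the triangle inequality. The only cosmetic difference is that you apply the triangle inequality directly to $\varphi=\Pi_0^*\varphi+\Pi_1^*\varphi$, whereas the paper reaches the same bound by passing through the dual supremum characterization of $\|\cdot\|_{H^{-\half}(\Gamma)}$; this is a minor streamlining, not a different argument.
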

\begin{proof}
Both projectors $\Pi_0^*$ and $\Pi_1^*$ are continuous, thus $|\varphi|_{H^{-\half}(\Gamma)}\leq C\|\varphi\|_{H^{-\half}(\Gamma)}$. On the other hand,
		\begin{align*}
			\|\varphi\|_{H^{-\half}(\Gamma)}&=\sup_{g\in H^{\half}(\Gamma): \, g\neq 0}\frac{( \varphi, g)_{H^{-\half}(\Gamma), H^{\half}(\Gamma)}}{\|g\|_{H^{\half}(\Gamma)}}
			=\sup_{g\in H^{\half}(\Gamma): \, g\neq 0}\frac{( \Pi_1^*\varphi+\Pi_0^*\varphi, g)_{H^{-\half}(\Gamma), H^{\half}(\Gamma)}}{\|g\|_{H^{\half}(\Gamma)}}\\
			&\leq \sup_{g\in H^{\half}(\Gamma): \, g\neq 0}\frac{\left(\|\Pi_1^*\varphi\|_{H^{-\half}(\Gamma)}+\|\Pi_0^*\varphi\|_{H^{-\half}(\Gamma)}\right)\|g\|_{H^{\half}(\Gamma)}}{\|g\|_{H^{\half}(\Gamma)}}=|\varphi|_{H^{-\half}(\Gamma)}.\qedhere
		\end{align*}
\end{proof}

\begin{cor} \label{DTNd=1}
Let $m=2$. The Dirichlet-to-Neumann operator $\dtn:H^\half(\Gamma)\to H^{-\half}(\Gamma)$
defined by 
\[
\dtn:\  g \mapsto \gamma^+_1 u_g,
\]
where $u_g\in H^1_\loc(\Omega_+)$ is the unique solution of the Dirichlet problem
\[
\left\{
\begin{aligned}
	-\Delta u &= 0  \text{ in }\Omega_+,\\
	\gamma^+_0u &= g  \text{ on } \Gamma,\\
	u(x) &= O(1) \text{ for }|x| \to \infty,
\end{aligned}\right.
\]
is bounded, non-positive and Fredholm of index $0$, with $\ker\dtn =\C \one_\Gamma$ and $\ran \dtn = H^{-\half}_0(\Gamma)$. 
In addition, it is coercive on $H^{\half}_0(\Gamma)$, 
i.e. there is a constant $c>0$ such that for any  $g\in H^{\half}(\Gamma)$ 
it holds that
\[
 - (\dtn g, \overline{g})_{H^{-\half}(\Gamma), H^{\half}(\Gamma)} \ge c \|\Pi_0 g\|^2_{H^\half (\Gamma)}. 
\]	
\end{cor}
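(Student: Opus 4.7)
The plan is to follow the same strategy as in Theorem \ref{DTNdge2}(c), using integration by parts ``up to the boundary at infinity'' together with the explicit representation of $u_g$ provided by Theorem \ref{Dirchletd1}. The main obstacle, compared to the case $m\ge 3$, is that the decay of $u_g$ is only $O(1)$ rather than $O(|x|^{2-m})$, so the vanishing of boundary integrals at infinity has to be justified from the sharper asymptotics $u_g(x)=c+O(|x|^{-1})$, $\nabla u_g(x)=O(|x|^{-2})$ coming from the absence of a logarithmic term.

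\textbf{Step 1 (boundedness and non-positivity).} By Theorem \ref{Dirchletd1} with $f=0$ one has $u_g=\SL h+c$, where $(h,c)$ depends continuously on $(g,0)$ via Lemma \ref{Sh+c=G}. Hence $\dtn g=\gamma^+_1\SL h=\tfrac12(-h+T^*h)$, which gives the boundedness. To show non-positivity I would apply Green's formula on $\Omega_R^+:=\Omega_+\cap B_R(0)$ with $u=v=u_g$, exactly as in Lemma \ref{uvskal}. Using that $\SL h=O(|x|^{-1})$ (coefficient of $\log$ vanishes by the second equation of \eqref{eq-shc} with $f=0$) and hence $u_g=c+O(|x|^{-1})$, $\partial_r u_g=O(|x|^{-2})$, the boundary term on $\{|x|=R\}$ is $O(R)\cdot O(R^{-2})=o(1)$, which yields
\[
-\bigl(\dtn g,\overline{g}\bigr)_{H^{-\half}(\Gamma),H^\half(\Gamma)}=\int_{\Omega_+}|\nabla u_g|^2\,\dd x\ge 0.
\]

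\textbf{Step 2 (kernel and range).} If $g=\one_\Gamma$, then $u_g\equiv 1$ is the unique solution of the Dirichlet problem, so $\dtn\one_\Gamma=0$. Conversely, if $\dtn g=0$, then $u_g$ solves the exterior Neumann problem with zero data and $u_g=O(1)$; Lemma \ref{uniq}(b-ii) then forces $u_g$ constant, hence $g\in\C\one_\Gamma$. For $\ran\dtn\subseteq H^{-\half}_0(\Gamma)$, I would apply Green's identity on $\Omega_R^+$ with $u_g$ against the constant harmonic function $v=1$; the same asymptotics as in Step 1 make the boundary integral at infinity vanish, giving $(\dtn g,\one_\Gamma)_{H^{-\half}(\Gamma),H^\half(\Gamma)}=0$. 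For the reverse inclusion, given $h\in H^{-\half}_0(\Gamma)$ the compatibility condition of Theorem \ref{Neumannd1} is satisfied for $f=0$; the resulting solution $u\in H^1_\loc(\Omega_+)$ satisfies $u(x)=o(1)$, hence in particular $u=O(1)$, so by uniqueness of the Dirichlet problem $u=u_g$ for $g:=\gamma^+_0 u$, and $\dtn g=\gamma^+_1 u=h$.

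\textbf{Step 3 (Fredholm of index zero).} The range $H^{-\half}_0(\Gamma)$ is closed as the kernel of the continuous functional $h\mapsto(h,\one_\Gamma)_{H^{-\half}(\Gamma),H^\half(\Gamma)}$. Combined with $\dim\ker\dtn=1$ and $\mathrm{codim}\,\ran\dtn=1$ (using the direct sum decompositions), this gives the Fredholm property with index $1-1=0$.

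\textbf{Step 4 (coercivity on $H^\half_0(\Gamma)$).} The restriction $\dtn\colon H^\half_0(\Gamma)\to H^{-\half}_0(\Gamma)$ is bounded and bijective, so by the open mapping theorem it has a bounded inverse $\dtn^{-1}\colon H^{-\half}_0(\Gamma)\to H^\half_0(\Gamma)$. I would then mimic the duality argument of Theorem \ref{DTNdge2}(c): for $g\in H^\half_0(\Gamma)$ and any $h\in H^{-\half}_0(\Gamma)$, let $v_h$ be the solution of Theorem \ref{Neumannd1} with $f=0$ and Neumann data $h$, for which $v_h=O(|x|^{-1})$ and $\nabla v_h=O(|x|^{-2})$; then the $m=2$ analogue of Lemma \ref{uvskal}, proved again by Green's formula on $\Omega_R^+$ with vanishing boundary contribution, yields
\[
-\bigl(h,\overline{g}\bigr)_{H^{-\half}(\Gamma),H^\half(\Gamma)}=\int_{\Omega_+}\langle\nabla v_h,\nabla u_g\rangle_{\C^2}\,\dd x.
\]
Using the equivalent norm of Lemma \ref{lem:equiv_norm} and the fact that for $g\in H^\half_0(\Gamma)$ the pairing with $\Pi_1^*h$ vanishes, Cauchy--Schwarz together with $\int_{\Omega_+}|\nabla v_h|^2\,\dd x\le\|\dtn^{-1}\|\,\|h\|_{H^{-\half}(\Gamma)}^2$ produces
\[
\|g\|_{H^\half(\Gamma)}^2\le C\int_{\Omega_+}|\nabla u_g|^2\,\dd x=-C\bigl(\dtn g,\overline{g}\bigr)_{H^{-\half}(\Gamma),H^\half(\Gamma)},
\]
and the claim follows since $\Pi_0 g=g$ for such $g$, while for general $g\in H^\half(\Gamma)$ one has $\dtn g=\dtn(\Pi_0 g)$ and $(\dtn g,\overline{g})=(\dtn\Pi_0 g,\overline{\Pi_0 g})$ because $\dtn$ annihilates $\C\one_\Gamma$ and its image is $L^2$-orthogonal to $\one_\Gamma$.
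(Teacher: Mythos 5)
Your proof is correct and follows essentially the same route as the paper: boundedness via the single-layer representation from Theorem~\ref{Dirchletd1}, the Green's-formula identity $-(\dtn g,\overline g)=\int_{\Omega_+}|\nabla u_g|^2$ for non-positivity, Lemma~\ref{uniq}(b-ii) and Theorem~\ref{Neumannd1} for the kernel and range, and the duality argument with the equivalent norm of Lemma~\ref{lem:equiv_norm} for coercivity. One small remark: you speak of proving an ``$m=2$ analogue'' of Lemma~\ref{uvskal}, but that lemma as stated already covers $m=2$ (the radiation condition $O(|x|^{2-m})$ is simply $O(1)$ there, and the gradient bound $\partial_r u=O(|x|^{-2})$ is handled in its proof via \cite[Prop.~2.75]{Folland}), so you may cite it directly; and where the paper establishes self-adjointness of $\dtn$ as an intermediate step and uses it both for the range computation and the final reduction to $\Pi_0 g$, you substitute a direct pairing against $v=\one_{\Omega_+}$ and the orthogonality $\ran\dtn\subset H^{-1/2}_0(\Gamma)$, which is an equivalent and equally valid shortcut.
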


\begin{proof}
Step 1. The boundedness of $\dtn$ follows directly from Theorem \ref{Dirchletd1}. 

Step 2: Let us show $\ker \dtn=\C\one_{\Gamma}$.
For $g:=\one_\Gamma$ one clearly has $u_g=\one_{\Omega_+}$
and $\dtn g=\gamma^+_1 u_g=0$, i.e. $\one_\Gamma\subset\ker \dtn$. On the other hand, let $g \in \ker \dtn$, then  $u_g$ satisfies
\[
	\Delta u_g = 0, \quad \gamma^+_1 u_g=0,\quad u(x)=O(1) \text{ for } |x|\to\infty,
\]
so $u$ is constant in $\Omega_+$ by Lemma \ref{uniq}(c-ii), and then $g$ is a multiple of $\one_\Gamma$.

Step 3: We show that $\dtn$ is a self-adjoint operator.  Let $g, h\in H^{\half}(\Gamma)$, and let $u_g, u_h$ resp. be the solutions of the Dirichlet problem of Theorem \ref{Dirchletd1} with $f=0$. Then, by Lemma \ref{uvskal}, 
\begin{align*}
	(\mathcal{C}g, \overline{h})_{H^{-\half}(\Gamma), H^{\half}(\Gamma)}&=-\int_{\Omega_+}\langle \nabla u_g, \nabla u_h\rangle_{\mathbb{C}^m}\dd x=-\overline{\int_{\Omega_+}\langle \nabla u_h, \nabla u_g\rangle_{\mathbb{C}^m}}\dd x\\
	&=\overline{(\mathcal{C}h, \overline{g})}_{H^{-\half}(\Gamma), H^{\half}(\Gamma)},
\end{align*}
which gives the sought conclusion.

Step 4: We show that $\ran \dtn=H^{-\half}_0(\Gamma)$. Due to the self-adjointness of $\dtn$ the subspace $\ran \dtn$
is contained in the annihilator of $\ker \dtn$, see e.g. \cite[Lemma 2.10  and p.~23]{McLean}.
The annihilator of $\ker\dtn\equiv\C\one_\Gamma$ is exactly $H^{-\half}_0(\Gamma)$, which gives the inclusion
$\ran \dtn\subset H^{-\half}_0(\Gamma)$. 
On the other hand, by Theorem \ref{Neumannd1} for any $h\in H^{-\half}_0(\Gamma)$ there is a function $v\in H^1_\loc(\Omega_+)$ with
\[
\Delta v=0,\quad \gamma^+_1 v=h,\quad  v(x)=o(1)=O(1) \text{ for } |x|\to\infty,
\]
so $v=u_g$ for $g:=\gamma^+_0 v$, and $h=\dtn g$. This shows $H^{-\half}_0(\Gamma)\subset \ran\dtn$.
Finally we obtain the sought equality, which then implies $\operatorname{codim}\ran\dtn=1$.

Step 5: Fredholm index. The above discussion shows that $\dtn$ is Fredholm with  $\operatorname{codim}\ran\dtn=1=\dim\ker\dtn$, thus $\dtn$ has zero index.

Step 6: Coercivity on the space $H^{\half}_0(\Gamma)$. 
For any $g\in H^\half(\Gamma)$ one has, due to Lemma \ref{uvskal},
\begin{equation}
	\label{nonpos00}
	-(\dtn g,\overline{g}\,)_{H^{-\half}(\Gamma),H^\half(\Gamma)}=-(\gamma^+_1 u_g,\overline{\gamma^+_0 u_g}\,)_{H^{-\half}(\Gamma),H^\half(\Gamma)}=\int_{\Omega_+}|\nabla u_g|^2\dd x\ge 0,
\end{equation}
which shows the non-positivity of $\dtn$. 
Further remark that by the above consideration the restricted operator
\[
\dtn_0:\ H^\half_0(\Gamma)\ni g_0\mapsto \dtn g_0\in H^{-\half}_0(\Gamma)
\]
is bounded and bijective, therefore, it has a bounded inverse. In particular, for any $h_0\in H^{-\half}_0(\Gamma)$
we have
\begin{equation}
	\label{ch00}
\begin{aligned}	
\int_{\Omega_+}	|\nabla u_{\dtn_0^{-1}h_0}|^2\dd x &=-\big(h, \overline{\dtn^{-1}_0 h_0} \big)_{H^{-\half}(\Gamma), H^{\half}(\Gamma)}\\
&\leq  \|\dtn^{-1}_0\|_{H^{-\half}_0(\Gamma) \to H^{\half}_0(\Gamma)} \|h_0\|^2_{H^{-\half}(\Gamma)}. 
\end{aligned}
\end{equation}
By Lemma  \ref{lem:equiv_norm} we can find some $B>0$ such that $|h|_{H^{-\half}(\Gamma)}\le B\|h\|_{H^{-\half}(\Gamma)}$
for all $h\in H^{-\half}(\Gamma)$.

Let $g_0 \in H^{\frac{1}{2}}_0(\Gamma)$, then
\begin{align*}
\|g_0\|_{H^{\frac{1}{2}}(\Gamma)}&=\sup\limits_{h\in H^{-\frac{1}{2}}(\Gamma),\, h\ne 0} \dfrac{\big| (h,\Bar g_0 )_{H^{-\half }(\Gamma), H^{\half }(\Gamma)}\big|}{\|h\|_{H^{-\half}(\Gamma)}}\leq B\sup\limits_{h\in H^{-\frac{1}{2}}(\Gamma),\, h\ne 0} \dfrac{\big| (h,\Bar g )_{H^{-\half }(\Gamma), H^{\half }(\Gamma)}\big|}{|h|_{H^{-\half}(\Gamma)}}\\
&= B\sup\limits_{h\in H^{-\frac{1}{2}}(\Gamma),\, h\ne 0} \dfrac{\big| (\Pi_{0}^*h+\Pi_1^* h,\Bar g_0)_{H^{-\half }(\Gamma), H^{\half }(\Gamma)}\big|}{\|\Pi_0^*h\|_{H^{-\half}(\Gamma)}+\|\Pi_1^*h\|_{H^{-\half}(\Gamma)}}=I
\end{align*}
Remark that for any $h\in H^{-\half}(\Gamma)$, 
\begin{align*}
	(\Pi_1^*h, \Bar g_0)_{H^{-\half }(\Gamma), H^{\half }(\Gamma)}=(h, \overline{\Pi_1 g_0})_{H^{-\half }(\Gamma), H^{\half }(\Gamma)}=0,
\end{align*}
since $g_0\in H^{-\half}_0(\Gamma)=\operatorname{Ker}\Pi_1$, 
and the preceding computation can be continued as:
\begin{align*}
I&=B\sup\limits_{h\in H^{-\frac{1}{2}}(\Gamma),\, h\ne 0} \dfrac{\big| (\Pi_{0}^*h,\Bar g_0)_{H^{-\half }(\Gamma), H^{\half }(\Gamma)}\big|}{\|\Pi_0^*h\|_{H^{-\half}(\Gamma)}+\|\Pi_1^*h\|_{H^{-\half}(\Gamma)}}\\
&=B\sup\limits_{h\in H^{-\frac{1}{2}}(\Gamma),\, \Pi_0^*h\ne 0} \dfrac{\big| (\Pi_0^*h,\Bar g )_{H^{-\half }(\Gamma), H^{\half }(\Gamma)}\big|}{\|\Pi_0^*h\|_{H^{-\half}(\Gamma)}+\|\Pi_1^*h\|_{H^{-\half}(\Gamma)}}\\
&\leq B\sup\limits_{h\in H^{-\frac{1}{2}}(\Gamma),\, \Pi_0^*h\ne 0} \dfrac{\big| (\Pi_0^*h,\Bar g )_{H^{-\half }(\Gamma), H^{\half }(\Gamma)}\big|}{\|\Pi_0^*h\|_{H^{-\half}(\Gamma)}}.
\end{align*}
Since $\ran \Pi_0^*=H^{-\half}_0(\Gamma)$, the reparametrization $h_0:=\Pi_0^* h$ yields
\begin{align*}
	\|g_0\|_{H^{\frac{1}{2}}(\Gamma)}&\leq B\sup\limits_{h_0\in H^{-\half}_0(\Gamma),\, h_0\ne 0} \dfrac{\big| (h_0,\Bar g_0)_{H^{-\half }(\Gamma), H^{\half }(\Gamma)}\big|}{\|h_0\|_{H^{-\half}(\Gamma)}}.
\end{align*}
By applying Lemma \ref{uvskal} on the right-hand side we arrive at
\begin{align*}
	\|g_0\|_{H^{\frac{1}{2}}(\Gamma)}^2 &\leq B \sup\limits_{h_0\in H^{-\half}_0(\Gamma),\, h_0\ne 0} \dfrac{\bigg|\displaystyle \int_{\Omega_+}\langle \nabla u_{\dtn_0^{-1}h_0},\nabla u_{g_0}\rangle_{\C^m}\dd x	\bigg|^2	
	}{\|h\|_{H^{-\half}(\Gamma)}^2}\\
	& \leq B \sup\limits_{h_0\in H^{-\half}_0(\Gamma),\,h_0\ne 0} \dfrac{\displaystyle \int_{\Omega_+}|\nabla u_{\dtn_0^{-1}h_0}|^2\dd x \int_{\Omega_+} |\nabla u_{g_0}|^2\dd x}{\|h\|_{H^{-\half }(\Gamma)}^2}\\
\text{use \eqref{ch00}: }	&\leq B \sup\limits_{h_0\in H^{-\half}_0(\Gamma),\,h_0\ne 0} \dfrac{\displaystyle \|\dtn^{-1}_0\|_{H^{-\half}_0(\Gamma) \to H^{\half}_0(\Gamma)} \|h_0\|^2_{H^{-\half}(\Gamma)} \int_{\Omega_+} |\nabla u_{g_0}|^2\dd x}{\|h\|_{H^{-\half }(\Gamma)}^{2}}\\
	&\le  B\|\dtn^{-1}_0\|_{H^{-\half}_0(\Gamma) \to H^{\half}_0(\Gamma)}  \int_{\Omega_+} |\nabla u_{g_0}|^2\dd x,
\end{align*}
i.e.
\[
\int_{\Omega_+} |\nabla u_{g_0}|^2\dd x\ge c \|g_0\|_{H^{\half}(\Gamma)}^2,
\quad
c:=\dfrac{1}{B\|\dtn^{-1}_0\|_{H^{-\half}_0(\Gamma) \to H^{\half}_0(\Gamma)}}.
\]
The substitution into \eqref{nonpos00} gives
\begin{equation}
	\label{cg0}
-(\dtn g_0,\overline{g_0}\,)_{H^{-\half}(\Gamma),H^\half(\Gamma)}\ge c \|g_0\|^2_{H^\half(\Gamma)}
\text{ for all } g_0\in H^\half_0(\Gamma).
\end{equation}

Using the self-adjointness of $\dtn$ and the identity $\dtn \Pi_1=0$, for any $g\in H^\half(\Gamma)$
we obtain:
\begin{align*}
- (\dtn g, \overline{g})_{H^{-\half}(\Gamma), H^{\half}(\Gamma)}&= - (\dtn (\Pi_0g+\Pi_1 g), \overline{\Pi_0g+\Pi_1 g})_{H^{-\half}(\Gamma), H^{\half}(\Gamma)}\\
&=-(\dtn \Pi_0g, \overline{\Pi_0 g})\stackrel{\eqref{cg0}}{\ge}
c \|\Pi_0 g\|^2_{H^\half(\Gamma)}.\qedhere
\end{align*}

\end{proof}

 \section{Hybrid transmission problem}
\label{sec:4}
\subsection{Well-posedness}
\label{sec:41}
Now we give a rigorous formulation of the transmission problem described in the introduction.

\begin{prob}\label{prob41}
Let $\alpha_1\in \C\setminus\{0\}$ and $\alpha_0\in L^\infty(\Gamma)$ be given.	
For $f_\cT\in L^2(\cT)$,  $f_\Omega\in L^2_\comp(\Omega)$ and $c\in\C$, find 
the solutions $(u_\cT,u_\Omega)\in H^1(\cT)\times H^1_\loc(\Omega)$ of the transmission problem
	\begin{equation}
	\label{transm00}
	\left\{\begin{aligned}
		\Delta_\cT u_\cT&=f_\cT \text{ on } \cT,\\
		\Delta u_\Omega&=f_\Omega \text{ on } \Omega,\\
		\gamma^\Omega_0 u_\Omega&=\gamma^\cT_0 u_\cT \text{ on } \Gamma,\\
		\gamma^\Omega_1 u_\Omega-\alpha_1\gamma^\cT_1 u_\cT&=\alpha_0 \gamma^\Omega_0 u_\Omega \text{ on }\Gamma,\\
		u_\Omega(x)&=O(|x|^{2-m}) \text{ for } |x|\to\infty,\\
		u_\cT(o)&=c.	
	\end{aligned}
	\right.	
\end{equation}
\end{prob}

Let us start with preliminary remarks on the associated homogeneous problem.

\begin{lem}\label{lem42}
Let $\alpha_1\in \C\setminus\{0\}$ and $\alpha_0\in L^\infty(\Gamma)$. Define the operator
\begin{equation}
	\label{moper}
	M:\ H^{\half}(\Gamma)\ni g\ \mapsto\   -\dtn g+\alpha_1\D g +\alpha_0 g \in H^{-\half}(\Gamma),
\end{equation}
then $\dim\ker M$ coincides with the dimension of the solution space of the homogeneous problem
	\begin{equation}
		\label{transm00hh}
		\left\{\begin{aligned}
			\Delta_\cT w_\cT&=0 \text{ on } \cT,\\
			\Delta w_\Omega&=0 \text{ on } \Omega,\\
			\gamma^\Omega_0 w_\Omega&=\gamma^\cT_0 w_\cT \text{ on } \Gamma,\\
			\gamma^\Omega_1 w_\Omega-\alpha_1\gamma^\cT_1 w_\cT&=\alpha_0 \gamma^\Omega_0 w_\Omega \text{ on }\Gamma,\\
			w_\Omega(x)&=O(|x|^{2-m}) \text{ for } |x|\to\infty,\\
			w_\cT(o)&=0.
		\end{aligned}
		\right.	
	\end{equation}
\end{lem}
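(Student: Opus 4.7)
The plan is to construct mutually inverse linear bijections between $\ker M$ and the solution space of \eqref{transm00hh}, thereby identifying the two dimensions. The key point is that both homogeneous problems (the interior Dirichlet problem on $\cT$ and the exterior Dirichlet problem on $\Omega$) are uniquely solvable, so once the common trace on $\Gamma$ is chosen, everything else is determined.

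First, I would define a map $\Phi:\ker M\to\{\text{solutions of \eqref{transm00hh}}\}$ as follows. Observe that since $\sigma d<\frac{1}{2}$ we have a continuous embedding $H^{\half}(\Gamma)\hookrightarrow H^{\sigma d}(\Gamma)$ and correspondingly $H^{-\sigma d}(\Gamma)\hookrightarrow H^{-\half}(\Gamma)$, so $M$ is indeed well-defined as stated. For $g\in\ker M\subset H^{\half}(\Gamma)$ set
\[
w_\cT:=P_\cT g\in \widetilde H^1(\cT),\qquad w_\Omega:=u_g\in H^1_\loc(\Omega),
\]
where $u_g$ is the unique solution of the exterior Dirichlet problem (Theorem~\ref{thm21}) with $f=0$. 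By construction $\Delta_\cT w_\cT=0$, $\Delta w_\Omega=0$, $w_\cT(o)=0$ (definition of $\widetilde H^1$), $w_\Omega(x)=O(|x|^{2-m})$, and $\gamma^\cT_0 w_\cT=g=\gamma^\Omega_0 w_\Omega$. Finally, by the definitions of $\dtn$ and $\D$,
\[
\gamma^\Omega_1 w_\Omega-\alpha_1\gamma^\cT_1 w_\cT=\dtn g-\alpha_1\D g=-Mg+\alpha_0 g=\alpha_0\gamma^\Omega_0 w_\Omega,
\]
because $Mg=0$. Hence $\Phi(g):=(w_\cT,w_\Omega)$ is a solution of \eqref{transm00hh}, and $\Phi$ is linear.

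Conversely, I define $\Psi:\{\text{solutions of \eqref{transm00hh}}\}\to\ker M$ by $\Psi(w_\cT,w_\Omega):=\gamma^\Omega_0 w_\Omega$. Given such a solution, set $g:=\gamma^\Omega_0 w_\Omega=\gamma^\cT_0 w_\cT\in H^{\half}(\Gamma)$. Then $w_\Omega$ solves the exterior Dirichlet problem with boundary datum $g$ and $f=0$, so by uniqueness (Theorem~\ref{thm21}) $w_\Omega=u_g$, and consequently $\gamma^\Omega_1 w_\Omega=\dtn g$. On the tree side, $w_\cT\in\widetilde H^1(\cT)$ (because $w_\cT(o)=0$) is harmonic with trace $g$, so by Lemma~\ref{DPLEwell} $w_\cT=P_\cT g$ and $\gamma^\cT_1 w_\cT=\D g$. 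The fourth line of \eqref{transm00hh} then reads $\dtn g-\alpha_1\D g=\alpha_0 g$, which is exactly $Mg=0$. Linearity of $\Psi$ is clear.

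It remains to verify that $\Phi$ and $\Psi$ are mutual inverses. For $g\in\ker M$,
\[
\Psi\bigl(\Phi(g)\bigr)=\gamma^\Omega_0 u_g=g
\]
by the boundary condition imposed on $u_g$. Conversely, for a solution $(w_\cT,w_\Omega)$ of \eqref{transm00hh}, the computation above shows $w_\Omega=u_g$ and $w_\cT=P_\cT g$ with $g:=\Psi(w_\cT,w_\Omega)$, i.e.\ $\Phi\bigl(\Psi(w_\cT,w_\Omega)\bigr)=(w_\cT,w_\Omega)$. Thus $\Phi$ is a linear isomorphism of vector spaces, which yields the claimed equality of dimensions.

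There is no real obstacle here: the only subtle point to flag explicitly is the Sobolev-space bookkeeping at $\Gamma$ (the trace spaces of the Euclidean side are $H^{\pm\half}$ while those of the tree side are $H^{\pm\sigma d}$, which are related by the strict inclusion coming from $\sigma d<\frac{1}{2}$), but this is automatic once $g$ is taken in $H^{\half}(\Gamma)$.
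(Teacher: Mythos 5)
Your proof is correct and follows essentially the same approach as the paper's: both constructions identify $\ker M$ with the solution space of \eqref{transm00hh} via the common Dirichlet trace $g$ on $\Gamma$, relying on the unique solvability of the two Dirichlet problems (Lemma~\ref{DPLEwell} and Theorem~\ref{thm21}) to build the bijection and on the definitions of $\dtn$ and $\D$ to convert the transmission condition into $Mg=0$.
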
	
\begin{proof}
	Let $(w_\T,w_\Omega)$ be a solution of \eqref{transm00hh} and $g:=\gamma^\cT_0 w_\T\equiv \gamma^\Omega_0 w_\Omega$,
	then $w_\cT$ and $w_\Omega$ are solutions of
	\begin{equation}
				\left\{\begin{aligned}
			\Delta_\cT w_\cT&=0 \text{ on } \cT,\\
			\gamma^\cT_0 w_\cT&=g \text{ on } \Gamma,\\
			w_\cT(o)&=0,
		\end{aligned}
		\right.	
		\qquad
		\left\{\begin{aligned}
			\Delta w_\Omega&=0 \text{ on } \Omega,\\
			\gamma^\Omega_0 w_\Omega&=g \text{ on } \Gamma,\\
			w_\Omega(x)&=O(|x|^{2-m}) \text{ for } |x|\to\infty,
		\end{aligned}
		\right.	
		\label{w-homog}
	\end{equation}
	therefore, $\gamma^\Omega_1 w_\Omega=\dtn g$ and $\gamma^\cT_1 w_\cT=\D g$.
	The fourth condition in \eqref{transm00hh} is satisfied if and only if
$\dtn g-\alpha_1\D g=\alpha_0 g$, i.e. $g\in\ker M$. To arrive at the desired conclusion, it remains to note that by Lemma~\ref{DPLEwell} and Theorem~\ref{thm21}
	the map $g\mapsto (w_\cT,w_\Omega)$ is one-to-one.  
\end{proof}
Our principal result is as follows:
\begin{thm}\label{thm31}
Let $\alpha_1\in \C\setminus\{0\}$ and $\alpha_0\in L^\infty(\Gamma)$ be such that the homogeneous problem \eqref{transm00hh}
has the unique solution	$(w_\T,w_\Omega)=(0,0)$.

Then the operator $M$ in \eqref{moper}
is an isomorphism. Moreover, the non-homogeneous problem \eqref{transm00}
has a unique solution $(u_\cT,u_\Omega)\in H^1(\cT)\times H^1_\loc(\Omega)$ 
for any choice of $f_\cT\in L^2(\cT)$,  $f_\Omega\in L^2_\comp(\Omega)$ and $c\in\C$,  and the solution depends continuously on $(f_\cT,f_\Omega,c)$ as follows: for any cut-off function $\varphi\in C^\infty_c(\R^m)$ there is a constant $B>0$ such that
\[
\|u_\cT\|_{H^1(\cT)}+\|\varphi u_\Omega\|_{H^1(\Omega)}\le B\big(\|f_\cT\|_{L^2(\cT)}+\|f_\Omega\|_{L^2(\Omega)} + |c|\big).
\]
\end{thm}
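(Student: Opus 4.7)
My plan is to reduce Problem \ref{prob41} to a single operator equation $Mg = F$ on $\Gamma$ for the common trace $g \in H^\half(\Gamma)$, then show that $M : H^\half(\Gamma) \to H^{-\half}(\Gamma)$ is Fredholm of index $0$ by recognizing it as a compact perturbation of $-\dtn$. Lemma \ref{lem42} combined with the hypothesis will then give $\ker M = 0$, so $M$ becomes an isomorphism by the open mapping theorem, and reassembling the solution from $g = M^{-1}F$ yields existence, uniqueness, and the continuity estimate.

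For the reduction, I introduce three particular lifts. Let $c_\cT$ denote the constant function with value $c$ on $\cT$: it lies in $H^1(\cT)$ because constants are $L^2$-integrable by \eqref{eq-lpa}, satisfies $\Delta_\cT c_\cT = 0$, and has $\gamma^\cT_0 c_\cT = c\one_\Gamma$ and $\gamma^\cT_1 c_\cT = 0$ (the latter follows immediately from the defining identity \eqref{deriv} since $(c_\cT)' \equiv 0$). Lemma \ref{lem24} supplies $v_0 \in \Tilde H^1_0(\cT)$ with $\Delta_\cT v_0 = f_\cT$, and Theorem \ref{thm21} supplies $u_{\Omega,0} \in H^1_\loc(\Omega)$ with $\Delta u_{\Omega,0} = f_\Omega$, $\gamma^\Omega_0 u_{\Omega,0} = 0$, and the prescribed decay. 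Any prospective solution is then forced into the form $u_\cT = c_\cT + v_0 + P_\cT(g - c\one_\Gamma)$ and $u_\Omega = u_{\Omega,0} + u_g$, where $g := \gamma^\cT_0 u_\cT = \gamma^\Omega_0 u_\Omega \in H^\half(\Gamma)$ and $u_g$ is the harmonic extension of $g$ provided by Theorem \ref{thm22}. Using $\gamma^\cT_1 P_\cT = \D$ and $\gamma^\Omega_1 u_g = \dtn g$, the transmission condition in the fourth line of \eqref{transm00} collapses to
\[
Mg \;=\; \gamma^\Omega_1 u_{\Omega,0} - \alpha_1 \gamma^\cT_1 v_0 + \alpha_1 c\, \D \one_\Gamma \;=:\; F,
\]
with $M$ as in \eqref{moper}. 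Each summand of $F$ lies in $H^{-\half}(\Gamma)$, using the continuous embedding $H^{-\sigma d}(\Gamma) \hookrightarrow H^{-\half}(\Gamma)$ for the tree contributions, and conversely any $g$ solving $Mg = F$ yields, via these formulas, a genuine solution of \eqref{transm00}; thus the problem is equivalent to inverting $M$.

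To establish that $M$ is Fredholm of index $0$, I will show it differs from $-\dtn$ by a compact operator. Theorem \ref{thm22} gives $\dtn : H^\half(\Gamma) \to H^{-\half}(\Gamma)$ bounded and Fredholm of index $0$ in both cases $m \ge 3$ and $m = 2$. Theorem \ref{DTNtree} yields that $\D : H^{\sigma d}(\Gamma) \to H^{-\sigma d}(\Gamma)$ is bounded; composing the bounded inclusion $H^\half(\Gamma) \hookrightarrow H^{\sigma d}(\Gamma)$ (valid since $\sigma d < \half$) with $\D$ and with the compact Rellich embedding $H^{-\sigma d}(\Gamma) \hookrightarrow H^{-\half}(\Gamma)$ shows that $\alpha_1 \D : H^\half(\Gamma) \to H^{-\half}(\Gamma)$ is compact. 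Similarly, multiplication by $\alpha_0 \in L^\infty(\Gamma)$ factors as $H^\half(\Gamma) \hookrightarrow L^2(\Gamma) \hookrightarrow H^{-\half}(\Gamma)$ with a compact second arrow, so $g \mapsto \alpha_0 g$ is compact as well. Hence $M$ is a compact perturbation of $-\dtn$, which makes it Fredholm of index $0$. By Lemma \ref{lem42}, $\dim \ker M$ equals the dimension of the solution space of the homogeneous problem \eqref{transm00hh}, which vanishes by hypothesis; together with the index-zero property this forces $M$ to be bijective, and the open mapping theorem delivers the boundedness of $M^{-1}$.

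The continuity estimate then follows by tracking each ingredient: Lemma \ref{lem24} controls $\|v_0\|_{\Tilde H^1(\cT)}$ by a constant multiple of $\|f_\cT\|_{L^2(\cT)}$, whence also $\|\gamma^\cT_1 v_0\|_{H^{-\sigma d}(\Gamma)}$ via Lemma \ref{lem18}; Theorem \ref{thm21} controls $\|\varphi u_{\Omega,0}\|_{H^1(\Omega)}$ by a multiple of $\|f_\Omega\|_{L^2(\Omega)}$, and hence $\|\gamma^\Omega_1 u_{\Omega,0}\|_{H^{-\half}(\Gamma)}$ by the mapping properties of the exterior Neumann trace on a bounded neighborhood of $\Gamma$. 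Boundedness of $M^{-1}$, $P_\cT$, $P_\Omega$, $\D$, $\dtn$, and of the lift $c \mapsto c_\cT$ then absorbs the remaining terms and yields the bound by $\|f_\cT\|_{L^2(\cT)}+\|f_\Omega\|_{L^2(\Omega)}+|c|$. The delicate point I anticipate is accounting correctly for the Sobolev-scale mismatch between the tree side (traces in $H^{\pm\sigma d}(\Gamma)$) and the exterior side (traces in $H^{\pm\half}(\Gamma)$); once this is handled through the compact embeddings above and $M$ is correctly set up on $H^\half(\Gamma) \to H^{-\half}(\Gamma)$, the remainder of the argument is standard Fredholm theory combined with Lemma \ref{lem42}.
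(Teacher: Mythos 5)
Your proposal is correct and follows essentially the same strategy as the paper's proof: reduce the transmission problem to the scalar equation $Mg=F$ on $\Gamma$, show $M$ is a compact perturbation of $-\dtn$ and hence Fredholm of index $0$, combine with Lemma \ref{lem42} and the hypothesis to get injectivity and hence bijectivity, and reassemble. The only substantive difference is your choice of tree-side lift: you write the tree ansatz as $c_\cT + v_0 + P_\cT(g - c\one_\Gamma)$ using the constant function $c_\cT$, exploiting that constants lie in $H^1(\cT)$ with zero Laplacian and zero conormal derivative, whereas the paper postulates an abstract $u_1\in H^1_0(\cT)$ with $u_1(o)=1$ and $\Delta_\cT u_1\in L^2(\cT)$ without constructing it. Your variant is in effect the explicit choice $u_1 = \one_\cT - P_\cT\one_\Gamma$, which makes the argument self-contained and sidesteps having to justify that such a lift exists; the resulting right-hand side $F = \gamma^\Omega_1 u_{\Omega,0} - \alpha_1\gamma^\cT_1 v_0 + \alpha_1 c\,\D\one_\Gamma$ matches the paper's $-h$ under this identification. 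All the remaining ingredients (compactness of $\iota_2\D\iota_1$ and of $\iota_4 T_{\alpha_0}\iota_3$, Fredholm index stability, the continuity estimate via bounded dependence on the data through Lemma \ref{lem24}, Theorem \ref{thm21}, Lemma \ref{lem18} and $M^{-1}$) are used in both proofs in the same way.
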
	

\begin{proof}
Step 1: We are going to show that the operator $M$ defined by \eqref{moper} is surjective.
The embeddings $\iota_1:\ H^{\half}(\Gamma) \hookrightarrow H^{\sigma d}(\Gamma)$ and $\iota_2:\ H^{-\sigma d}(\Gamma) \hookrightarrow H^{-\half}(\Gamma)$,
are compact, which yields the compactness of
$\iota_2\D\iota_1: H^{\frac{1}{2}}(\Gamma) \to H^{-\half}(\Gamma)$. Similarly, the compactness of
the embeddings
$\iota_3:\ H^{\half}(\Gamma) \hookrightarrow L^2(\Gamma)$, $\iota_4:\ L^2(\Gamma) \hookrightarrow H^{-\half}(\Gamma)$,
and the boundedness of the multiplication operator
\[
T_{\alpha_0}:\,L^2(\Gamma)\ni g\mapsto\alpha_0 g\in L^2(\Gamma)
\]
imply that
$\iota_4 T_{\alpha_0} \iota_3:\ H^\half(\Gamma)\to H^{-\half}(\Gamma)$
is compact. For the above operator $M$ we have the representation
$M=-\dtn+\alpha_1\iota_2\D\iota_1+\iota_4 T_{\alpha_0} \iota_3$,
so $M$ is a compact perturbation of the zero-index Fredholm operator $-\dtn$ (see Theorem \ref{thm22}), and it follows that $M$ is a zero-index Fredholm operator too. The assumption (unique solvability of the homogeneous problem)
and Lemma \ref{lem42} show that  $M$ is injective, so it is also surjective.

Step 2: Uniqueness of solutions. Let $(u_\cT,u_\Omega)$ and $(\Tilde u_\cT,\Tilde u_\Omega)$ be two solutions of the non-homogeneous problem \eqref{transm00}, then the functions  $w_\cT:=\Tilde u_\cT-u_\cT$ and $w_\Omega:=\Tilde u_\Omega-u_\Omega$ solve the homogeneous problem \eqref{transm00hh}. By assumption $(w_\cT,w_\Omega)=(0,0)$, which yields $(u_\cT,u_\Omega)=(\Tilde u_\cT,\Tilde u_\Omega)$.

Step 3: Existence of solutions. Use the ansatz $u_\cT=u+c u_1+u_f$, $u_\Omega=v+v_f$,
where
\begin{itemize}
	\item $u_1\in H^1_0(\cT)$ is an arbitrary function with $u_1(o)=1$ and $\Delta_\cT u_1\in L^2(\cT)$,
	\item $u_f\in \Tilde H^1_0(\cT)$ satisfies $\Delta_\cT u_f=f_\cT-c\Delta_\cT u_1$, which exists by~Lemma \ref{lem24},
	\item $u\in \Tilde H^1(\cT)$ is to be determined,
	\item $v_f\in H^1_\loc(\Omega)$ is the solution of
	\[
	\left\{\begin{aligned}
		- \Delta v_f &=f_\Omega  \text{ in }\Omega_+,\\
		\gamma^\Omega_0 v_f &=0  \text{ on }\Gamma,\\
		v_f(x)&= 	O(|x|^{2-m}) \text{ for } |x|\to \infty,
	\end{aligned}\right.
	\]	 
	which exists by Theorem \ref{thm21},
	\item $v\in H^1_\loc(\Omega)$ is to be determined.
\end{itemize}
Then $(u_{\cT}, u_{\Omega})$ satisfy \eqref{transm00} if and only if the new unknown functions $(u,v)$ satisfy
	\begin{equation}
		\label{transm11}
		\left\{\begin{aligned}
			\Delta_\cT u&=0 \text{ on } \cT,\\
			\Delta v&=0 \text{ on } \Omega,\\
			\gamma^\Omega_0 v&=\gamma^\cT_0 u \text{ on } \Gamma,\\
			\gamma^\Omega_1 v-\alpha_1\gamma^\cT_1 u&=\alpha_0 \gamma^\Omega_0 v +h \text{ on }\Gamma,\\
			v(x)&=O(|x|^{2-m}) \text{ for } |x|\to\infty,\\
			u(o)&=0,		
		\end{aligned}
		\right.	
	\end{equation}
with $h:=- \gamma^\Omega_1 v_f+\alpha_1 \gamma^\cT_1 \big(cu_1 +u_f\big)\in H^{-\half}(\Gamma)$.
Note that for any $g\in H^\half(\Gamma)$ the solutions $(u,v)$ to 
\begin{equation}
	\left\{\begin{aligned}
		\Delta_\cT u &=0 \text{ on } \cT,\\
		\gamma^\cT_0 u&=g \text{ on } \Gamma,\\
		u(o)&=0,
	\end{aligned}
	\right.	
	\qquad
		\left\{\begin{aligned}
		\Delta v&=0 \text{ on } \Omega,\\
		\gamma^\Omega_0 v&=g \text{ on } \Gamma,\\
		v(x)&=O(|x|^{2-m}) \text{ for } |x|\to\infty,
	\end{aligned}
	\right.	
	\label{uv-homog}
\end{equation}
satisfy all conditions in \eqref{transm11} expect the fourth one for the normal derivatives.
In order to fulfill this remaining condition we note that for $(u,v)$ in \eqref{uv-homog}
we have
\[
\gamma^\Omega_1 v= \dtn g,\qquad
\gamma^\cT_1 u=\D g,
\]
and the fourth condition in \eqref{transm11} holds if and only if 
\begin{align}
\label{eq:alphag}
\dtn g-\alpha_1 \D g-\alpha_0 g=h,	
\end{align}
which can be rewritten as $M g=-h$. As $M$ is surjective (as shown in Step 1), this equation
has a solution $g\in H^\half(\Gamma)$. By solving \eqref{uv-homog} for this $g$ we obtain a required solution for \eqref{transm11}.

Step 4: Dependence on the initial data. The continuity of the solution on the initial data easily follows by noting that the transitions
\[
(f_\cT,c)\mapsto u_f,\quad
f_\Omega\mapsto v_f,
\quad
(f_\cT,f_\Omega,c)\mapsto h,
\quad h\mapsto g,
\quad g\mapsto v,
\quad g\mapsto u
\]
are continuous in respective norms (with an additional cut-off on $\Omega$).	
\end{proof}
Specific cases for which Theorem \ref{thm31} is applicable are easily obtained using the sign-definiteness and {coercivity} of $\dtn$ and $\D$:
\begin{cor} \label{cor44}
The assumptions of Theorem \ref{thm31} are satisfied in the following cases:

\begin{itemize}
	\item[(i)] $\Re \alpha_1, \, \Re \alpha_0\geq 0$ with $\Re \alpha_1+\Re \alpha_0>0$ a.e. 
	\item [(ii)] $\Im \alpha_1,\Im \alpha_0\geq 0$, with $\Im \alpha_1+\Im \alpha_0>0$ a.e. 
\end{itemize}
Hence, for each of these cases 	Problem \ref{prob41} is uniquely solvable.
\end{cor}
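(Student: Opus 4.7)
The plan is to verify that under either hypothesis~(i) or~(ii) the operator $M=-\dtn+\alpha_1\D+\alpha_0$ introduced in Theorem~\ref{thm31} has trivial kernel on $H^{\half}(\Gamma)$; by Lemma~\ref{lem42} this is precisely the assumption of Theorem~\ref{thm31}, so the unique solvability of Problem~\ref{prob41} will then follow at once. So pick $g\in H^{\half}(\Gamma)$ with $Mg=0$. The condition $\sigma d<\tfrac12$ from~\eqref{eq-sigma2} gives the continuous embedding $H^{\half}(\Gamma)\hookrightarrow H^{\sigma d}(\Gamma)$, so testing the equation $Mg=0$ against $\overline g$ produces
\[
-(\dtn g,\overline g)_{H^{-\half}(\Gamma),H^{\half}(\Gamma)}+\alpha_1(\D g,\overline g)_{H^{-\sigma d}(\Gamma),H^{\sigma d}(\Gamma)}+\int_\Gamma\alpha_0|g|^2\,\dd s=0.
\]

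The decisive observation is that both of the first two pairings above are genuinely \emph{real}. Indeed, Lemma~\ref{uvskal} applied with $u=v$ equal to the harmonic extension $u_g$ of $g$ to $\Omega_+$ yields $(\dtn g,\overline g)=-\int_{\Omega_+}|\nabla u_g|^2\,\dd x\le 0$; and the computation at the end of the proof of Theorem~\ref{DTNtree}, together with the reality of $P_\cT$, gives $(\D g,\overline g)=\|P_\cT g\|^2_{\widetilde H^1(\cT)}\ge 0$, reinforced by the coercive bound $(\D g,\overline g)\ge c\|g\|^2_{H^{\sigma d}(\Gamma)}$ from Theorem~\ref{DTNtree}. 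Separating real and imaginary parts in the identity above therefore produces the two real equations
\[
-(\dtn g,\overline g)+\Re(\alpha_1)(\D g,\overline g)+\int_\Gamma\Re(\alpha_0)|g|^2\,\dd s=0,
\]
\[
\Im(\alpha_1)(\D g,\overline g)+\int_\Gamma\Im(\alpha_0)|g|^2\,\dd s=0.
\]

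Under hypothesis~(i) every summand of the first identity is non-negative, so each summand must vanish. If $\Re\alpha_1>0$, one reads off $(\D g,\overline g)=0$ and the coercivity of $\D$ immediately forces $g=0$; if instead $\Re\alpha_1=0$, the assumption $\Re\alpha_1+\Re\alpha_0>0$ a.e.\ on $\Gamma$ degenerates into $\Re\alpha_0>0$ a.e., and $\int_\Gamma\Re(\alpha_0)|g|^2\,\dd s=0$ again yields $g=0$. Hypothesis~(ii) is treated by exactly the same two sub-cases applied to the second (imaginary) identity, whose summands are all non-negative under the assumption $\Im\alpha_1,\Im\alpha_0\ge 0$ with $\Im\alpha_1+\Im\alpha_0>0$ a.e. In either situation $\ker M=\{0\}$, and Theorem~\ref{thm31} then delivers the unique solvability of Problem~\ref{prob41}.

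The only mildly subtle point I anticipate is confirming that $(\dtn g,\overline g)$ and $(\D g,\overline g)$ are indeed real, which hinges on the bilinear (rather than sesquilinear) pairing convention adopted in the paper and on the reality of the lifting $P_\cT$ and of the Dirichlet solution $u_g$; once these are in hand, the remainder is a short non-negativity argument with two essentially trivial sub-cases distinguishing whether or not $\Re\alpha_1$ (respectively $\Im\alpha_1$) vanishes.
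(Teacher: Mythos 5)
Your proof is correct and follows essentially the same route as the paper: test $Mg=0$ against $\bar g$, observe that $(\dtn g,\bar g)$ and $(\D g,\bar g)$ are real, take real or imaginary parts, and use non-positivity of $\dtn$ together with coercivity of $\D$ to force $g=0$. The only cosmetic difference is the final step: you distinguish the sub-cases $\Re\alpha_1>0$ and $\Re\alpha_1=0$ (and likewise for $\Im\alpha_1$), whereas the paper avoids this split by passing from the coercive lower bound $c\|g\|^2_{L^2(\Gamma)}$ to the single chain $\ge\min\{1,c\}\int_\Gamma(\Re\alpha_1+\Re\alpha_0)|g|^2\,\dd s$ — equivalent, just slightly more compact.
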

\begin{proof}
Let $\alpha_1\in \C\setminus\{0\}$ and $\alpha_0\in L^\infty(\Gamma)$.
Let $g\in H^\half(\Gamma)$ with $Mg=0$, then
\begin{align}
\nonumber
0&=(Mg,\Bar g)_{H^{-\half}(\Gamma),H^\half(\Gamma)}\\
\label{eq:dtnid}
&=-(\dtn g,\Bar g)_{H^{-\half}(\Gamma),H^\half(\Gamma)}
+\alpha_1 (\D g,\Bar g)_{H^{-\sigma d}(\Gamma),H^{\sigma d}(\Gamma)}+\int_\Gamma \alpha_0 |g|^2\,\dd s.
\end{align}
We start by arguing that $(i)$ is sufficient. Taking the real part and using the coercivity of $\D$ (Theorem \ref{DTNtree}) and the non-positivity of $\dtn$ (Corollary \ref{DTNd=1} for $m=2$, Theorem \ref{DTNdge2} for $m\geq 3$), we obtain, with some $c>0$,
\begin{align*}
0&=	-(\dtn g,\Bar g)_{H^{-\half}(\Gamma),H^\half(\Gamma)}
+(\Re \alpha_1) (\D g,\Bar g)_{H^{-\sigma d}(\Gamma),H^{\sigma d}(\Gamma)}+\int_\Gamma (\Re \alpha_0) |g|^2\,\dd s\\
&\ge (\Re \alpha_1) (\D g,\Bar g)_{H^{-\sigma d}(\Gamma),H^{\sigma d}(\Gamma)}+\int_\Gamma (\Re \alpha_0) |g|^2\,\dd s,\\
&\ge (\Re \alpha_1)c\|g\|^2_{H^{\sigma d}(\Gamma)}+\int_\Gamma (\Re \alpha_0) |g|^2\,\dd s\\
&\ge (\Re \alpha_1)c\|g\|^2_{L^2(\Gamma)}+\int_\Gamma (\Re \alpha_0) |g|^2\,\dd s\\
&\ge \min\{1,c\} \int_\Gamma (\Re \alpha_1+\Re \alpha_0) |g|^2\,\dd s,
\end{align*}
and the assumptions $(i)$ yields $g=0$ a.e.

To argue that $(ii)$ is sufficient, we take the imaginary part of \eqref{eq:dtnid} and use the coercivity of $\D$. This yields
	\begin{align*}
0&= (\Im \alpha_1) (\D g,\Bar g)_{H^{-\sigma d}(\Gamma),H^{\sigma d}(\Gamma)}+ \int_\Gamma (\Im \alpha_0) |g|^2\,\dd s,
\end{align*}
and we conclude by proceeding almost verbatim like in the previous case. 
\end{proof}

\subsection{Approximations by finite truncations}\label{sec42}

In this subsection we illustrate how the finite-dimensional approximations of $\D$ constructed in Subsection~\ref{sec-dtn} can be employed to approximate the solutions of the transmission problem \eqref{transm00}. For that we assume that the metric tree $\cT$ is geometric from some generation $N_1$ (see Definition \ref{regularity}) and denote
\[
\D_N:=\D P_N,\quad N\in\N.
\]
The result of Corollary~\ref{cor219} and the boundedness of the embeddings
\[
H^\half(\Gamma)\hookrightarrow H^{\sigma' d}(\Gamma),\quad H^{-\sigma d}(\Gamma)\hookrightarrow H^{-\half}(\Gamma)
\]
imply that for large $N$ one has
\begin{equation}
	\label{error01}
\|\D_N-\D\|_{H^{\half}(\Gamma)\to H^{-\half}(\Gamma)}=O(p^{-\rho N}) \text{ with any } \rho\in\Big(0,\frac{1-2\sigma}{2d}\Big).
\end{equation}
Further recall that Theorem~\ref{thm221} provides an efficient way of computing $\D_N$ for large $N$ using $N$-condensations of $\cT$.

In virtue of Theorem \ref{thm31} and Corollary \ref{cor44}, for suitably chosen $\alpha_1\in \C$ and $\alpha_0\in L^{\infty}(\Gamma)$
and any $f\in L^2_\comp(\Omega)$  there is a unique solution $(u_\cT,u_\Omega)\in H^1(\cT)\times H^1_\loc(\Omega)$  of the transmission problem
\begin{equation}
	\label{transm33}
	\left\{\begin{aligned}
		\Delta_\cT u_\cT&=0\text{ on } \cT,\\
		\Delta u_\Omega&=f_\Omega \text{ on } \Omega,\\
\gamma^\Omega_0 u_\Omega&=\gamma^\cT_0 u_\cT \text{ on } \Gamma,\\
		\gamma^\Omega_1 u_\Omega-\alpha_1\gamma^\cT_1 u_\cT&=\alpha_0 \gamma^\Omega_0 u_\Omega \text{ on }\Gamma,\\
		u_\Omega(x)&=O(|x|^{2-m}) \text{ for } |x|\to\infty,\\
		u_\cT(o)&=0.		
	\end{aligned}
	\right.	
\end{equation}
To construct this solution, we proceed like in the proof of Theorem~\ref{thm221} (Step 3). Repeating the corresponding argument and using the same notation we conclude that 
\begin{align*}
	u_{\Omega}=v+v_f, \quad u_{\mathcal{T}}=u,
\end{align*}
where $u, v$ solve \eqref{uv-homog} with $g$ being the unique solution to  \eqref{eq:alphag}, i.e. 
\begin{align*}
	\dtn g-\alpha_1\D g-\alpha_0 g=-\gamma_1^{\Omega}v_f.
\end{align*}
A natural approximation to the above problem consists in replacing $\D$ by $\D_N$ (all the related quantities will be marked by index $N$). 

By combining \eqref{error01} with Theorem \ref{thm31} we conclude that for large $N$ the operator
\begin{align*}
	q&\mapsto -\dtn q+\alpha_1\D_N q+\alpha_0 q\equiv \big(-\dtn q+\alpha_1\D q+\alpha_0  q) +\alpha_1(\D_N-\D)q
\end{align*}
is an isomorphism $H^\half(\Gamma)\to H^{-\half}(\Gamma)$, and applying the Neumann series for its inverse yields that 
there is a unique solution $g_N\in H^\half(\Gamma)$ of
\[
-\dtn g_N+\alpha_1\D_N g_N+\alpha_0 g_N=h,
\]
and one has $\|g_N-g\|_{H^\half(\Gamma)}=O(p^{-\rho N})$. 

By using the continuous dependence of the solutions of \eqref{uv-homog} on $g$ we conclude that
the solutions $v^N$ and $u^N$ of
\begin{equation}
	\left\{\begin{aligned}
		\Delta_\cT u^N &=0 \text{ on } \cT,\\
		\gamma^\cT_0 u^N&=g_N \text{ on } \Gamma,\\
		u^N_\cT(o)&=0,
	\end{aligned}
	\right.	
	\qquad 
		\left\{\begin{aligned}
		\Delta v^N&=0 \text{ on } \Omega,\\
		\gamma^\Omega_0 v^N&=g_N \text{ on } \Gamma,\\
		v^N(x)&=O(|x|^{2-m}) \text{ for } |x|\to\infty,
	\end{aligned}
	\right.	
\end{equation}
satisfy
\[
\|u - u^N\|_{H^1(\cT)}=O(p^{-\rho N}),
\quad 
\|\varphi(v-v^N)\|_{H^1(\Omega)}=O(p^{-\rho N}),
\]
for any cut-off function $\varphi\in C^\infty_c(\R^m)$. Hence, the functions $u^N_{\mathcal{T}}:=u^N$ and $u^N_\Omega:=v_f+v^N$ with large $N$ provide a good approximation to the solution $(u_\cT,u_\Omega)$ in the sense that
\[
\|u^N_\cT - u^N\|_{H^1(\cT)}=O(p^{-\rho N}),
\quad 
\|\varphi(u^N_\Omega-u_\Omega)\|_{H^1(\Omega)}=O(p^{-\rho N})
\]
for any cut-off function $\varphi\in C^\infty_c(\R^m)$.

\begin{rmk}
Consider in greater details the case $\alpha_0\equiv 0$. Then the unique solvability of Problem \ref{prob41} fails if and only if
the homogeneous problem
	\begin{equation}
	\label{transm00hh-mod}
	\left\{\begin{aligned}
		\Delta_\cT w_\cT&=0 \text{ on } \cT,\\
		\Delta w_\Omega&=0 \text{ on } \Omega,\\
		\gamma^\Omega_0 w_\Omega&=\gamma^\cT_0 w_\cT \text{ on } \Gamma,\\
		\gamma^\Omega_1 w_\Omega&=\alpha_1\gamma^\cT_1 w_\cT \text{ on }\Gamma,\\
		w_\Omega(x)&=O(|x|^{2-m}) \text{ for } |x|\to\infty,\\
		w_\cT(o)&=0.
	\end{aligned}
	\right.	
\end{equation}
has non-trivial solutions. Lemma \ref{lem42} implies that the values $\alpha_1$ for which it happens are exactly those
with
\begin{equation}
	\label{kcd0}
\ker(-\dtn+\alpha_1\D)\ne\{0\}.
\end{equation}
Using the non-positivity of $\dtn$ one concludes
that $\Id-\dtn:\,H^\half(\Gamma)\to H^{-\half}(\Gamma)$ is an isomorphism.
Then the factorization
\[
-\dtn+\alpha_1\D=\Big(\Id +(\alpha_1\D-\Id)(\Id-\dtn)^{-1}\Big)(\Id-\dtn)
\]
shows that~\eqref{kcd0} rewrites as
\[\ker \Big(\Id +(\alpha_1\D-\Id)(\Id-\dtn)^{-1}\Big)\ne\{0\}.
\]
Noting that $(\Id-\dtn)^{-1}$ and $\D(\Id-\dtn)^{-1}$ can be viewed as compact operators in $H^{-\half}(\Gamma)$
we conclude (using the analytic Fredholm theorem) that the values of $\alpha_1$ for which
\eqref{transm00hh-mod} has non-trivial solutions form a discrete subset of $\C$ (i.e. without finite accumulation points).
In fact, in view of Corollary \ref{cor44} and using the complex conjugation one easily sees that all these values belong to $(-\infty,0)$.

The problem \eqref{transm00hh-mod} and the respective critical values of $\alpha_1$ represent a natural mixed-dimensional counterpart
of the so-called plasmonic eigenvalue problem in $\R^m$, see e.g. the discussion in \cite{grieser, grieser2}, and it is known that
the plasmonic eigenvalues in the Euclidean case have a finite accumulation point, contrary to what was just observed
for our problem.
\end{rmk}

\end{document}